\newcounter{lil}
\newcounter{lil22}
\newenvironment{steps-2}
{\em \begin{list} {  (\roman{lil22})} {\usecounter{lil22} \em
\setlength{\leftmargin}{1.3cm}
\setlength{\topsep}{0.2cm} \setlength{\itemsep}{0.0cm}
\setlength{\parsep}{0.1cm} \setlength{\itemindent}{0.4cm}
\setlength{\parskip}{0.0cm}}} {\end{list}}
\newcounter{gr11}
\newenvironment{letters}
{\begin{list} { (\alph{gr11})$\, $} {\usecounter{gr11}
\setlength{\labelwidth}{-0.2cm} \setlength{\leftmargin}{0.4cm}
\setlength{\topsep}{0.1cm} \setlength{\itemsep}{0.2cm}
\setlength{\parsep}{0.1cm} \setlength{\itemindent}{0.4cm}
\setlength{\parskip}{0.0cm}}} {\end{list}}
\newcommand{\AAA}{{U}}
\newcommand{\Law}{{\mathcal{L}aw}}
\newcommand{\CCC}{H}
\newcommand{\mg}{\mathfrak{g}}
\newcommand{\baray}{\begin{array}{rcl}}
\newcommand{\earay}{\end{array}}
\newcommand{\barray}{\begin{array}{rcl}}
\newcommand{\earray}{\end{array}}
\newcommand{\bcal}{\mathcal{B}}
\newcommand{\mF}{\mathfrak{F}}
\newcommand{\mT}{\mathfrak{T}}
\newcommand{\mS}{\mathfrak{S}}
\newcommand{\sn}{{s_0}}
\newcommand{\sr}{{0}}
\newcommand{\levy}{L\'evy }
\newcommand\coc[1]{{\overline{#1}}} 
\newcommand\dela[1]{}
\newcommand\noot[1]{}
\newcommand{\bcase}{\begin{cases}}
\newcommand{\ecase}{\end{cases}}
\newcommand\cadlag{c{\`a}dl{\`a}g }
\newcommand\del[1]{}
\def\bibiter#1{\bibitem{#1}}
\def\bibiter#1{\bibitem{#1}}                               
\newcommand{\lk}{\left}
\newcommand{\lqq}{\lefteqn}
\newcommand{\rk}{\right}
\newcommand{\la}{{\langle}}
\newcommand{\ra}{{\rangle}}
\newcommand{\LL}{{\rm I \kern -0.2em L}}
\newcommand{\ep} {\varepsilon }
\newcommand{\be} {\begin{enumerate} }
\newcommand{\ee} {\end{enumerate} }
\newcommand{\CE}{{{ \mathcal E }}}
\newcommand{\CC}{{{ \mathcal C }}}
\newcommand{\CZ}{{{ \mathcal Z }}}
\newcommand{\CT}{{{ \mathcal T }}}
\newcommand{\CA}{{{ \mathcal A }}}
\newcommand{\CH}{{{ \mathcal H }}}
\newcommand{\CB}{{{ \mathcal B }}}
\newcommand{\CM}{{{ \mathcal M }}}
\newcommand{\BF}{{{ \mathbb{F} }}}
\newcommand{\CF}{{{ \mathcal F }}}
\newcommand{\CL}{{{ \mathcal L }}}
\newcommand{\RR}{{\mathbb{R}}}
\newcommand{\DD}{\mathbb{D}}
\newcommand{\NN}{\mathbb{N}} 
\newcommand{\PP}{{\mathbb{P}}}
\newcommand{\EE}{ \mathbb{E} }
\newcommand{\TT}{{\rm I \kern -0.2em T}}
\newcommand{\DEQS}{\begin{eqnarray*}}
\newcommand{\EEQS}{\end{eqnarray*}}
\newcommand{\DEQSZ}{\begin{eqnarray}}
\newcommand{\EEQSZ}{\end{eqnarray}}
\newcommand{\DEQ}{\begin{eqnarray}}
\newcommand{\EEQ}{\end{eqnarray}}
\newcommand{\hh}{h}
\newcommand{\Jeh}{J_\ep^\frac12}
\theoremstyle{plain}
\numberwithin{equation}{section}
\newcounter{gr1}
\newcounter{gr1n}
\newenvironment{numlistn}
{\begin{list} { (\roman{gr1n})} {\usecounter{gr1n}
\setlength{\leftmargin}{0.9cm}
\setlength{\topsep}{0.1cm} \setlength{\itemsep}{0.0cm}
\setlength{\parsep}{0.1cm} \setlength{\itemindent}{-0.7cm}
\setlength{\parskip}{0.0cm}}} {\end{list}}
\theoremstyle{plain}
\numberwithin{equation}{section}
\begin{document}

\title{The nonlinear Schr\"odinger Equation driven by jump processes}
\thanks{This work was supported by the FWF-Project
P17273-N12 and the ANR project Stosymap (ANR 2011-B501-015-03)}

\author{Anne de Bouard \and Erika Hausenblas}
\address{A. de Bouard, CMAP, Ecole polytechnique, CNRS, Universit\'e Paris-Saclay, 91128 Palaiseau, France}
\address{E. Hausenblas, Lehrstuhl Angewandte Mathematik, Montanuniversitat Leoben, Franz-Josef-Strasse 18,
8700 Leoben, Austria}

\date{\today}

\begin{abstract}
The main result of the paper is the existence  of a solution of the nonlinear Schr\"odinger equation with a \levy noise with infinite activity.
To be more precise, let $A=\Delta$ be the Laplace operator with $D(A)=\{ u\in L ^2 (\RR ^d): \Delta u \in L ^2 (\RR ^d)\}$.
Let $Z\hookrightarrow L ^2(\RR ^d)$ be a function  space and $\eta$ be a Poisson random measure on $Z$,
let $g:\RR\to\mathbb{C}$ and $\hh:\RR\to\mathbb{C}$ be some given functions, satisfying certain conditions specified later.
Let $\alpha\ge 1$ and $\lambda\ge 0$. We are interested in the solution of the following equation
\DEQSZ\label{itoeq}
\hspace{2cm} 
\lk\{ \baray \lqq{ i \, d u(t,x)  -  \Delta u(t,x)\,dt +\lambda |u(t,x)|^{\alpha-1} u(t,x) \, dt\hspace{2cm}}&&
\\&=& \int_Z u(t,x)\, g(z(x))\,\tilde \eta (dz,dt)+\int_Z u(t,x)\, \hh (z(x))\, \gamma (dz, dt),\\
u(0)&=& u_0. \earay \rk.
 \EEQSZ
First we consider the case, where the \levy process is a compound Poisson process. With the help of this result
we can tackle the general case, and show that \eqref{itoeq} has a solution.
\end{abstract}

\maketitle



\newtheorem{theorem}{Theorem}[section]
\newtheorem{notation}{Notation}[section]
\newtheorem{claim}{Claim}[section]
\newtheorem{lemma}[theorem]{Lemma}
\newtheorem{corollary}[theorem]{Corollary}
\newtheorem{example}[theorem]{Example}
\newtheorem{assumption}[theorem]{Assumption}
\newtheorem{tlemma}{Technical Lemma}[section]
\newtheorem{definition}[theorem]{Definition}
\newtheorem{remark}[theorem]{Remark}
\newtheorem{hypotheses}{H}
\newtheorem{hypo}{Hypothesis}
\newtheorem{proposition}[theorem]{Proposition}
\newtheorem{Notation}{Notation}
\renewcommand{\theNotation}{}

\renewcommand{\labelenumi}{\alph{enumi}.)}

\textbf{Keywords and phrases:} {Stochastic integral of jump type,
stochastic partial differential equations, Poisson random measures, L\'evy processes,
Schr\"odinger  Equation.}

\textbf{AMS subject classification (2002):} {Primary 60H15;
Secondary 60G57.}



\section{Introduction}\label{sec_intro}

We consider in the present paper the problem of existence of solutions for the nonlinear Schr\"odinger equation with \levy noise.
To be more precise, let $A=\Delta$ be the Laplace operator with $D(A)=\{ u\in L ^2 (\RR ^d): \Delta u \in L ^2 (\RR ^d)\}$.
Let $Z\hookrightarrow L ^2(\RR ^d)$ be a function  space and $\eta$ be a Poisson random measure on $Z$,
let $g:\RR\to\mathbb{C}$ and $\hh:\RR\to\mathbb{C}$ be some given functions, satisfying certain conditions specified later.
Let $\alpha\ge 1$ and $\lambda\in\RR$. We are interested in the following equation
\DEQSZ\label{itoeq_1}
\hspace{2cm}
 \lk\{ \baray \lqq{ i \, d u(t,x)  - A u(t,x)\,dt +\lambda |u(t,x)|^{\alpha-1} u(t,x) \, dt\hspace{2cm}}&&
\\&=& \int_Z u(t,x)\, g(z(x))\,\tilde \eta (dz,dt)+\int_Z u(t,x)\, \hh (z(x))\, \gamma (dz, dt),\\
u(0)&=& u_0. \earay \rk.
 \EEQSZ
Our aim is to investigate the conditions on the nonlinearity, on the space $Z$ and on the complex valued functions $c$ and $g$, under
which  there exists a weak or martingale solution to \eqref{itoeq_1}.

The Nonlinear Schr\"odinger equation  (NLS)  is a universal model that describes the propagation of nonlinear waves in
dispersive media. It may e.g. appear  as  a so-called  modulation  equation, describing the complex enveloppe of a highly
oscillating field in nonlinear optics, and in particular in fiber optics (see \cite{agrawal, newell}).
It may also be derived from the water wave problem, thanks to scaling and perturbation arguments,
to describe the propagation of surface waves of finite amplitude in deep fluids (see \cite{wayn,zak}).
The propagation of nonlinear dispersive waves in nonhomogeneous or
random media (or taking account of temperature effects) can be modelled by the nonlinear equation
with a random force, or a random potential (see e.g. \cite{abdullaev, bang, falkovich}).

%

When the stochastic perturbation is a Wiener process, the equation is well treated and existence and uniqueness of the solution is known,
under reasonable assumptions on the noise correlation and on the nonlinearity.
For more information see  \cite{barbu,anne2,anne1,MR2135313,MR2652190}. The case where the nonlinear Schr\"odinger equation is
perturbed by a \levy process
is much less treated in the literature. In \cite{villa2,villa1}, the authors consider the NLS equation with randomly distributed,
but isolated jumps. In the context of fiber optics, the model would describe random amplification of the signal at random
(but isolated) locations along the fiber (see \cite{kodama}). In that situation the existence and uniqueness of solutions is easily deduced
from the classical results known in the deterministic case, and the motivations in \cite{villa2, villa1} were to obtain the evolution law
of some physical observables of the solution.

Here, we consider the more general case where the noise is an infinite dimensional \levy process, with possibly non isolated jumps, and
we investigate the existence of martingale solutions. Before stating the precise result, let us introduce some notations.

\begin{notation}
\noot{We denote by $B_{p,q}^ s(\RR^d)$ the natural Besov spaces defined in \cite[Chapter 1.2.5, p.\ 8]{triebelII} or \cite[Definition 2, p.\ 8]{runst}.}
For $k\in\NN_0$ we denote by $H^k_p(\RR^d)$ the classical Sobolev spaces defined in \cite[Chapter 3, Definition 3.1]{triebelharoske}.
For $\delta\ge 0$ and $p\in(0,\infty]$ let $L^p_\delta(\RR^d)=\{ v\in L^p(\RR^d )$ with $\int_{\RR^d } (1+|x|^{2})^\frac \delta2\, |u(x)|^p\, dx<\infty\}$.

For any index $p\in[1,\infty]$ we denote throughout the paper the conjugate element by $p'$. In particular, we have $\tfrac 1p + \tfrac 1 {p'}=1$.
For complex valued functions $u$ and $v$ in $L^2(\RR^d)$, we denote by $\langle u,v\rangle$ the (real) inner product
$$
 \langle u,v\rangle =\Re \int_{\RR^d} u(x) \overline{v(x)} \, dx.
$$
Given a Banach space $E$ and a number $R>0$, we denote by $B_E(R)$ all elements with norm smaller or equal to $R>0$, i.e.\ $B_E(R) :=\{ x\in E, |x|_E\le R\}$.

Suppose that  $(Z,{\mathcal{Z}})$ is a measurable space.  By  $M(Z)$, respectively $M_+(Z)$,  we will denote the
set of all $\mathbb{R}$, respectively $[0,\infty]$-valued  measures on $(Z,{\mathcal{Z}})$.  By ${\mathcal{M}}(Z)$, respectively ${\mathcal{M}}_+(Z)$, we will denote
the $\sigma$-field on $M(Z)$, respectively $M_+(Z)$, generated by functions
\[i_B:M(Z) \ni\mu \mapsto \mu(B)\in {\mathbb{R}},\]
respectively by functions
\[i_B:M_+(Z) \ni\mu \mapsto \mu(B)\in [0,\infty],\]
for all $B\in {\mathcal{Z}}$.
 Similarly,
by $M_I( Z)$ we will  denote the family of all
$\overline{\mathbb{N}}$-valued measures on $(Z,{\mathcal{Z}})$ $(\overline{\mathbb{N}}=\NN\cup\{\infty\})$,   and  by
${\mathcal{M}}_I(Z)$ the $\sigma$-field
 on $M_I(Z)$ generated by functions
$i_B:M(Z) \ni\mu \mapsto \mu(B)\in \overline{\mathbb{N}}$, $B\in {\mathcal{Z}}$.

Finally,    by
${\mathcal{Z}}\otimes \mathcal{B}({\mathbb{R}_+})$  we  denote  the product
$\sigma$-field on $Z\times \mathbb{R}_+$ and by $\nu\otimes
\lambda$ we denote the product measure of $\nu$ and the Lebesgue measure
$\lambda$.

\end{notation}

\section{Preliminaries and main result}


%
%

%
{Throughout the whole paper, we assume that
$\mathfrak{A}=(\Omega,\CF,\BF,\PP)$ is a complete filtered probability space with right continuous filtration
 $\{\CF_t\}_{t\ge
0}$, denoted by  by $\BF$.
The following definitions are presented here for the sake of
completeness because the notion of time homogeneous random measure
is introduced in many, not always equivalent ways.
\begin{definition}\label{def-Prm}(see \cite{ikeda}, Def. I.8.1)
Let $(Z,\CZ)$  be a measurable space. \del{ and let $(\Omega,\CF,\BF,\PP)$ be a 
probability space with filtration $\BF=\{\CF_t\}_{t\ge 0}$.}\\ A
{\sl Poisson random measure} $\eta$ on $(Z,\CZ)$   over
$(\Omega,\CF,\BF,\PP)$  is a measurable function $\eta:
(\Omega,\CF)\to (M_I(Z\times \RR_+),\CM_I(Z\times \RR_+)) $, such
that {\begin{trivlist} \item[(i)] for each $B\in  \CZ \otimes
\mathcal{B}({\mathbb{R}_+}) $,
 $\eta(B):=i_B\circ \eta : \Omega\to \bar{\mathbb{N}} $ is a Poisson random variable with parameter\footnote{If  $\EE \eta(B) = \infty$, then obviously $\eta(B)=\infty$ a.s..} $\EE\eta(B)$;
\item[(ii)] $\eta$ is independently scattered, i.e. if the sets $
B_j \in   \CZ\otimes \mathcal{B}({\mathbb{R}_+})$, $j=1,\cdots,
n$, are  disjoint,   then the random variables $\eta(B_j)$,
$j=1,\cdots,n $, are independent;
\item[(iii)] for each $U\in \CZ$, the $\bar{\mathbb{N}}$-valued
process $(N(t,U))_{t\ge 0}$  defined by
$$N(t,U):= \eta(U \times (0,t]), \;\; t\ge 0$$
is $\BF$-adapted and its increments are independent of the past,
i.e.\ if $t>s\geq 0$, then $N(t,U)-N(s,U)=\eta(U \times (s,t])$ is
independent of $\mathcal{F}_s$.
\end{trivlist}
}

\end{definition}

\begin{definition}
The compensator of a random measure $\eta$ on a Banach space $Z$
is the unique predictable measure $\gamma : \CZ \times \bcal ({\mathbb{R}}^0_{+}) \to {\mathbb{R}}$, such that for any $A\in\CZ$ the process
$$
\RR_+^0 \ni t \mapsto \eta(A\times [0,t])-\gamma(A\times [0,t])
$$
is a martingale over $\mathfrak{A}$.
We will denote by $\tilde{\eta }$ the \it  compensated Poisson random measure \rm  defined by
$\tilde{\eta }: = \eta - \gamma $.
\end{definition}

\begin{remark}
Assume that $\eta$ is a time homogeneous Poisson random measure
on  $(Z,\CZ)$ over $(\Omega,\CF,\BF,\PP)$. It
turns out that the compensator $\gamma$ of $\eta$ is uniquely
determined and moreover
$$
\gamma: \CZ \times \CB(\RR^+)\ni (A,I)\mapsto  \nu(A)\times
\lambda(I),
$$
where the $\sigma$--finite measure $\nu:\CZ\to \RR_+\cup\{\infty\}$ is defined by  $\CZ\ni A \mapsto \nu(A):=\EE\eta(A\times [0,1])$. 
The  difference between a time homogeneous  Poisson random measure
$\eta$  and its compensator $\gamma$, i.e.  $\tilde
\eta=\eta-\gamma$, is called a  {\em compensated Poisson random
measure}. The measure $\nu$ is called {\sl intensity measure} of $\eta$.
\end{remark}

Let  $Z\hookrightarrow L^ 2(\RR^ d)$ be a function space, $\nu$ a $\sigma$--finite measure on $Z$ such that
$$\nu(\{0\})=0, \quad \int_Z(|z|^2 \wedge 1)\nu(dz)<\infty, \quad \mathrm{ and }\;
\nu(Z\setminus B_Z(\ep))<\infty \; \mathrm{for \; all }\; \ep>0. $$
Let $\eta$ be a time homogenous Poisson random measure on $Z$ with intensity measure $\nu$ over
$\mathfrak{A}$.

Let $g:\RR\to \mathbb{C}$ and $\hh:\RR \to \mathbb{C}$ be two functions specified later.
We will denote by $G:L ^ 2(\RR^ d )\to L^ 2 (\RR^ d )$ and $\CCC:L ^ 2(\RR^ d )\to L^ 2 (\RR^ d )$ the Nemytskii operators
associated to the functions $g$ and $\hh$, and defined by
$$
(G(z)) (x) :=g(z(x)),\quad 
\mbox{and}\quad  (\CCC(z)) (x) :=h(z(x)),\quad z\in Z,\,x\in\RR^ d.
$$

\medskip
We are now interested in the following equation
\DEQSZ\label{itoeqn}
\hspace{3cm}
 \lk\{ \baray \lqq{ i \, d u(t,x)  -  A u(t,x)\,dt +\lambda|u(t,x)|^{\alpha-1} u(t,x) \, dt\hspace{2cm}}&&
\\&=& \int_Z u(t,x)\, g(z(x))\,\tilde \eta (dz,dt)+\int_Z u(t,x)\, \hh(z(x))\, \gamma (dz, dt),\\
u(0)&=& u_0, \earay \rk.
 \EEQSZ

Let us denote by $(\CT(t))_{t\ge 0}$  the group of isometries generated by the operator $-iA$. 
As is classical in the framework of evolution equations, we will consider a
mild solution of equation \eqref{itoeqn}, whose definition is given below.

\begin{definition}
Let $E$ 
be a  Banach space.
We call $u$ an $E$--valued solution to Equation \eqref{itoeqn}, if and only if  $u\in\DD(0,T;E)$ $\PP$-a.s.,
the terms
$$
\int_0^ t \CT(t-s)
|u(s)|^{\alpha-1} u(s)\, ds,\quad  
 \int_0^t\int_Z  \lk|\CT(t-s) u(s)\, G(z)\rk|^ 2 \nu(dz)\, ds
$$
and
$$
 \int_0^t\int_Z  \CT(t-s) u(s)\, H(z)  \nu(dz) \, ds,
$$
are well defined for any $t\in[0,T]$ in $E$ and $u$ solves $\PP$-a.s.\ the integral equation
\DEQSZ
\label{eq_1}
u(t)  & = & \CT(t) u_0 + i \lambda \int_0^ t \CT(t-s)
|u(s)|^{\alpha-1} u(s)\, ds \\
 \nonumber
&& -i\int_0^t\int_Z  \CT(t-s) u(s)\, G(z) \,\tilde \eta (dz,ds)
-i \int_0^ t \int_Z \CT(t-s) {u(s) \CCC(z) \gamma(dz,ds) }
.
\EEQSZ
\end{definition}

However, for a \levy noise with infinite activity, we could not show the existence of a unique
strong solution, only the existence of a martingale solution. A concept, defined
in the following.

\begin{definition}\label{Def:mart-sol}
Let $E$ be a  separable  Banach space.
Let $(Z,\CZ)$ be a measurable space and $\nu$ a $\sigma$--finite measure on $(Z,\CZ)$.
Suppose that  $G$ and $H$ are a densely defined function from $ Z$
to $E$.
Let $u_0\in E$.
\\
A {\sl martingale solution} on $E$  to the Problem  \eqref{itoeqn}  is a system
\begin{equation}
\left(\Omega ,{\CF},{\Bbb P},\BF, 
\{\eta(t)\}_{t\ge 0},\{u(t)\}_{t\ge 0}\right)
\label{mart-system}
\end{equation}
such that
\begin{numlistn}
\item  $(\Omega ,{\CF},\BF,{\Bbb P})$ is a complete filtered
probability
space with filtration $\BF=\{{\CF}_t\}_{t\ge 0}$,
\item
${\{\eta(t)\}_{t\ge 0}}$  is a time homogeneous Poisson
random measure on $(Z,\CB(Z))$ over $(\Omega
,{\CF},\BF,{\Bbb P})$ with intensity measure $\nu$,
\item $u=\{u(t)\}_{t\ge 0}$  is a $E$--valued mild solution  to  the Problem \eqref{itoeqn}.
\end{numlistn}
%
\end{definition}
In order to be able to show the existence of a solution, the space $Z$,
the \levy measure $\nu$ and the functions $g,h:\RR^d \to\mathbb{C}$ have to satisfy certain conditions.
In particular, they have to satisfy the following hypothesis.
\begin{hypo}\label{hypogeneral}
\renewcommand{\theenumi}{\roman{enumi}}
\renewcommand{\labelenumi}{(\theenumi)\,}
First, we assume that $Z$ a function space and $\nu$ a \levy measure on $Z$ such that
\begin{enumerate}
  \item $Z$ is continuously embedded in the Sobolev space $ W^ 1_{\infty}(\RR^d)$; 
  \item the \levy measure $\nu$ satisfies the following integrability conditions
  \begin{enumerate}
    \item $C_0(\nu):=\int_Z |z|_{L^\infty}^ 2 \nu(dz)<\infty;\phantom{\Big|}$

    \item $C_1(\nu):=\int_Z |z|_{H^ 1 _\infty}^ 2 \nu(dz)<\infty;\phantom{\Big|}$
    \item  $C_2(\nu):=\int_Z \sup_{x\in {\RR^d}} |x|^ 2 |z(x)|^ 2 \nu(dz)<\infty;\phantom{\Big|}$
    \item $C_3(\nu):=\int_Z |z|_{L^\infty}^ 4 \nu(dz)<\infty.\phantom{\Big|}$
  \end{enumerate}
\end{enumerate}

In addition the functions $g:\RR\to\mathbb{C}$ and $\hh :\RR\to\mathbb{C}$ are satisfying the following items:
\begin{enumerate}
\renewcommand{\theenumi}{\roman{enumi}}
\setcounter{enumi}{2}
\item $g$, $\hh $ and their first order derivatives are of linear growth, i.e.\ there exist some constants $C_g$ and $C_\hh $ such that
$$|g(\xi )|, | g'(\xi )|\le C_g |\xi | \quad \mbox {and} \quad  |\hh (\xi )|,|\hh ' (\xi )|\le C_\hh  |\xi |, \quad \forall \xi \in \RR.
$$
  \item $g(0)=0$ and $\hh (0)=0$;
\end{enumerate}
\end{hypo}

\begin{remark}
Hypothesis \ref{hypogeneral} implies that the Nemitskii operators $G$ and $\CCC$ associated to $g$ and $\hh $ map $L ^ \infty(\RR^ d)$ into  $L ^ \infty(\RR^ d)$,
and their Frechet derivative $\nabla G$ and $\nabla \CCC$ also map $L ^ \infty(\RR^ d)$ into  $L ^ \infty(\RR^ d)$.
\end{remark}

To show the existence of the solution to the nonlinear Schr\"odinger equation with \levy noise of infinite activity,
we use the  technical Lemma \ref{local_ex} below, which gives existence and uniqueness of the solution to \eqref{itoeqn}
where the Levy process is  a compound Poisson Process, i.e.\ if
the \levy process has only finite activity.
Then, we use a cut off of the small jumps with a cut off parameter $\ep>0$ in order to get a noise with finite activity,
and we apply Lemma  \ref{local_ex} to get  the existence of a unique solution of  \eqref{itoeqn} with the cut-off noise of finite activity.
 In a second step,
we  show the existence of a limit as $\ep\to0$.
Here, uniform bounds on the $L^ 2(\RR^d) $ norm and $H^ 1_2(\RR^d)$ norm play an important role.
%
Similarly to the deterministic setting, this uniform bounds are obtained by controlling the mass and energy in average.
Let us define the mass by
\DEQSZ\label{mass}
\mathcal{E}(u):= \int_{\RR^d} | u(x)|^ 2 \, dx,
\EEQSZ
and the energy by 
\DEQSZ\label{hamiltonian}
\\
\nonumber
\mathcal{H}(u):= \frac12 \int_{\RR^d} |\nabla u(x)|^ 2 \, dx + {\lambda\over \alpha+1} 
\int_{\RR^d } |u(x)|^ {\alpha-1} { u(x)\, \coc{u(x)}\,} dx.
\EEQSZ

Under certain constrains on $g$, $\hh$ and $u_0$, the mass will be conserved.
For this purpose, we introduce  following hypothesis:

\begin{hypo}\label{hypoas}
\renewcommand{\theenumi}{\roman{enumi}}
\renewcommand{\labelenumi}{(\theenumi)\,}

Let us assume
\begin{enumerate}
  \item
  $\Im(g(\xi))=\Im(\hh (\xi)), \quad \xi\in\RR$;
  \vskip 0.07 in
  \item
  $|1{-i}g(\xi)|=1, \quad \xi\in\RR.$
\end{enumerate}
\end{hypo}

\begin{hypo}\label{hypomean}
Let us assume
  \item
  \DEQS
   2\Im(h(\xi))+ |g(\xi)|^ 2 =0,\quad \xi\in\RR.
  \EEQS
\del{  \DEQSZ\label{cond11}
  \Im(g(\xi))=0, \quad \xi\in\RR,
  \EEQSZ
  \item
  \DEQSZ\label{cond21}
  \Im \hh (\xi) = g(\xi) ^2, \quad \xi\in\RR.
  \EEQSZ
}
\end{hypo}

\del{\begin{hypo}\label{hypogrown}
Let us assume
$$C_3(\nu):=\int_Z |z|_{L^\infty}^ 4 \nu(dz)<\infty.\phantom{\Big|};
$$
\end{hypo}}

In fact, Hypothesis \ref{hypogeneral} gives only conditions under which the solution exists.
Hypothesis \ref{hypoas} and Hypothesis \ref{hypomean} give the conditions for the conservation of $\CE(u)$ $\PP$--a.s.\ or in mean.
We now state our main result.
\begin{theorem}\label{main}
Let  $\eta$ be  a time homogenous Poisson random measure on a Banach space $Z$ with \levy measure $\nu$ satisfying Hypothesis \ref{hypogeneral}.
Assume $\lambda>0$, $1\le \alpha<1+4/(d-2)$ if $d>2$, or $1\le \alpha<+\infty$ if $d=1,2$. Let $u_0\in H ^1_2(\RR ^d)$ with
$$ \int_{\RR^d} x  ^2 |u_0(x)| ^2 \, dx <\infty,
$$
then for any $\gamma<1$ there exists a $H^\gamma_2(\RR^d)$--valued martingale mild solution to \eqref{itoeqn}, such that 
\renewcommand{\labelenumi}{(\arabic{enumi})}
\begin{enumerate}
  \item 
  for any $T>0$, there exists a  constant $C=C(T,C_0(\nu),C_3(\nu),C_g,C_h)>0$ such that
$$   \EE\sup_{0\le s\le T}  |u(s)|^2 _{L^2 }\le C\, (1+ |u_0|^2 _{L^2 }).$$
  \item for any $T>0$, there exists a constant  $C=C(T,C_0(\nu),C_1(\nu),C_g,C_h)>0$ such that
$$
\EE \sup_{0\le t\le T} \CH(u(t)) \leq C\, \lk( 1+ \CH(u_0) \rk).
$$

  \item for any $T>0$, there exists a constant $C=C(T, C_2(\nu),C_g,C_h)>0$ such that
$$
\EE \sup_{0\le t\le T} \int_{\RR^d } x^2 |u(t,x)|^2 \, dx \le C\, \big(1+  \int_{\RR^d } x^2 |u_0(x)|^2 \, dx\big) .
$$
\end{enumerate}

\medskip
\noindent
In addition,
\renewcommand{\labelenumi}{(\arabic{enumi})}
\begin{enumerate}
  \item if Hypothesis \ref{hypoas} is satisfied, then we have for all $t\ge 0$,
  $$
   |u(t)|^2 _{L^2 }= |u_0|^2 _{L^2 },\quad \PP-\mbox{a.s.};
$$
  \item if Hypothesis \ref{hypomean} is satisfied, then we have for all $t\ge 0$,
  $$
   \EE |u(t)|^2 _{L^2 }= |u_0|^2 _{L^2 }.
$$

\end{enumerate}

\end{theorem}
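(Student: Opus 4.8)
The plan is to follow the two--step scheme announced above: first solve \eqref{itoeqn} for a truncated, finite--activity noise by means of Lemma \ref{local_ex}, and then pass to the limit as the truncation is removed, using uniform a priori bounds together with a compactness and Skorokhod representation argument. \textbf{Step I (truncation).} For $\ep>0$ put $Z_\ep:=Z\setminus B_Z(\ep)$. By the standing assumption $\nu(Z_\ep)<\infty$, so the restriction $\eta_\ep$ of $\eta$ to $Z_\ep\times\RR_+$ is the jump measure of a compound Poisson process and the corresponding truncated version of \eqref{itoeqn} has finite activity. Lemma \ref{local_ex} then yields a unique mild solution $u_\ep$, and the aim is to produce a martingale solution of the full problem as a limit point of $(u_\ep)_{\ep>0}$.

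\textbf{Step II (uniform a priori estimates).} This is the analytic core. I would apply the It\^o formula for Banach--space--valued jump processes to the three functionals $|u_\ep(t)|^2_{L^2}$, $\CH(u_\ep(t))$ and $\int_{\RR^d}x^2|u_\ep(t,x)|^2\,dx$. Since $\CT$ is a group of isometries and the nonlinear term $|u|^{\alpha-1}u$ leaves the $L^2$ norm invariant (its real inner product with $u$ vanishes after multiplication by $i$), the only contributions to these identities come from the jump terms, which are controlled by the linear growth of $g,\hh$ and the integrability constants $C_0(\nu),\dots,C_3(\nu)$ of Hypothesis \ref{hypogeneral}; the embedding $Z\hookrightarrow W^1_\infty(\RR^d)$ is what makes the energy computation close. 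Applying the Burkholder--Davis--Gundy inequality to the compensated integrals and then Gronwall's lemma produces the bounds (1)--(3) of the statement, uniformly in $\ep$. Note that $\lambda>0$ makes the potential part of $\CH$ nonnegative, so $\CH(u)\ge\tfrac12|\nabla u|^2_{L^2}$ and the bound on $\EE\sup_t\CH(u_\ep(t))$ controls $\EE\sup_t|\nabla u_\ep(t)|^2_{L^2}$.

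\textbf{Step III (tightness, Skorokhod and identification).} Because the dispersive group provides no smoothing, compactness must come entirely from Step II: combining the $H^1_2$ bound with the weighted second--moment bound yields compactness in $H^\gamma_2(\RR^d)$ for every $\gamma<1$, through a weighted Rellich--Kondrachov type embedding that uses the decay supplied by the weight to compactify the unbounded domain $\RR^d$. Together with an Aldous--type control of the increments of $u_\ep$ (again deduced from Step II applied on short time intervals), this gives tightness of the laws of $(u_\ep)$ in the Skorokhod space $\DD(0,T;H^\gamma_2)$. Along a subsequence I would invoke the Skorokhod representation theorem to obtain, on a new stochastic basis, copies $\bar u_\ep\to\bar u$ converging $\bar\PP$--a.s.\ together with a limiting time homogeneous Poisson random measure $\bar\eta$ of intensity $\nu$, and then verify that $(\bar\Omega,\bar\CF,\bar\PP,\bar\BF,\bar\eta,\bar u)$ solves \eqref{eq_1}. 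The deterministic nonlinear term passes to the limit by the a.s.\ strong convergence in $H^\gamma_2$ and the subcriticality $\alpha<1+4/(d-2)$ (which keeps $|u|^{\alpha-1}u$ controlled in a dual Sobolev norm), while the compensated and compensator integrals are identified by the usual martingale argument for $\bar\eta$.

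\textbf{Conservation laws and main obstacle.} The two additional statements follow by applying the It\^o formula to $|\bar u(t)|^2_{L^2}$: under Hypothesis \ref{hypoas} the relation $|1-i\,g(\xi)|=1$ forces the pointwise modulus to be unchanged at each jump, giving $|u(t)|^2_{L^2}=|u_0|^2_{L^2}$ almost surely, whereas under Hypothesis \ref{hypomean} the relation $2\Im(\hh(\xi))+|g(\xi)|^2=0$ makes the drift of $\EE|\bar u(t)|^2_{L^2}$ vanish, yielding conservation in mean. I expect the main obstacle to be Step II, and specifically the uniform bound on the Hamiltonian: with no parabolic regularization available, the estimate rests entirely on the conservation structure, and the cross terms produced by the interaction of the gradient and nonlinear--potential parts of $\CH$ with the multiplicative jump noise must be absorbed using exactly the $W^1_\infty$--control of the marks and the finiteness of $C_1(\nu)$. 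Closing this bound uniformly in $\ep$, while simultaneously controlling the weighted moment needed for compactness on $\RR^d$, is the crux of the argument.
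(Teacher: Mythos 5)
Your proposal follows essentially the same route as the paper: truncation of the small jumps and solution of the finite--activity problem via the Technical Lemma \ref{local_ex}, uniform bounds on the mass, the Hamiltonian and the virial, tightness of the laws in $\DD(0,T;H^\gamma_2(\RR^d))$ obtained from the weighted compact embedding together with increment estimates, Skorokhod representation on a new probability space, and identification of the a.s.\ limit as a mild martingale solution. The only organizational differences are that the paper does not redo the It\^o computations for the truncated solutions but inherits the uniform bounds directly from Lemma \ref{local_ex} (using $C_j(\nu_m)\le C_j(\nu)$), and it identifies the limit by passing term by term in the mild formulation rather than by a martingale-problem argument; neither difference changes the substance of the proof.
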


\del{\begin{remark}
If  the hypothesis \ref{hypoas} is satisfied, then the solution will a.s. preserves the $L ^2 $ norm.
If  the hypothesis \ref{hypomean} is satisfied, then the solution will preserves the $L ^2 $ norm in mean.
\end{remark}
}

{\begin{example}
Let $\xi \in \RR^d$ be fixed and $\theta :\RR ^d\times \RR\to\RR$. Then
$g_\xi: \RR\ni z\mapsto i(e^{i\theta(\xi,z)}-1)$, and $\hh_\xi  (z)=i\left(\cos(\theta(\xi,z))-1\right)$  satisfy assumption \ref{hypoas}.
\end{example}}

The proof of Theorem \ref{main} is presented in Section \ref{main_proof}, the technical Lemma \ref{local_ex} is presented in Section \ref{proof1}. First, in Section \ref{deterministc} we summarize some deterministic preliminaries
which we  need for the proof.  In the Appendix we collect several results, which we used within the proof.

\section{Deterministic Preliminaries}\label{deterministc}

In this section we shortly introduce some  propositions and lemmata, which are necessary to show our main results.
But before starting let us introduce some definitions.
The group $(\CT(t))_{t\ge0}$, corresponding to the Cauchy problem
\DEQS
 \lk\{ \baray i\partial_t u(t,x)-\Delta u(t,x)&=&0,\\ u(0,x)&=& \phi(x),\quad x\in\RR^d ,\, t\in\RR,
 \earray\rk.
 \EEQS
  can be expressed explicitly in Fourier variables, i.e.\
$$
\CF\lk( \CT(t) \phi\rk)(\xi) = e^ {4\pi^ 2 i|\xi|^ 2 t} \hat \phi(\xi),\quad \xi\in\RR^d ,\, t\in\RR.
$$
We recall  some well known deterministic results.

%

{
\begin{lemma}\label{strichatz}
If $t\not =0$, $1/p+1/p'=1$ and $p'\in[1,2]$, then
$\CT(t):L^{p'}(\RR ^d) \to L^{p}(\RR ^d)$ is continuous and
$$
  \lk| \CT(t) \phi\rk|_{L^{p}}\le  c \,  |t| ^{-\frac d2\lk( \frac 1{p'}-\frac 1p\rk)}  \lk|  \phi\rk|_{L^{p'}}
={ c \, |t| ^{-d\lk( \frac 1{2}-\frac 1p\rk)} }  \lk|
\phi\rk|_{L^{p'}}.
$$
\end{lemma}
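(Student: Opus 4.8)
The plan is to derive the estimate by Riesz--Thorin interpolation between two endpoint bounds: the unitarity of $\CT(t)$ on $L^2$ and a dispersive $L^1\to L^\infty$ bound carrying the full decay $|t|^{-d/2}$. The $L^2$ endpoint is immediate from the given Fourier representation: since $\CF(\CT(t)\phi)(\xi)=e^{4\pi^2 i|\xi|^2 t}\hat\phi(\xi)$ has a unimodular Fourier multiplier, Plancherel's theorem yields $|\CT(t)\phi|_{L^2}=|\phi|_{L^2}$ for every $t$, so $\CT(t)$ is an $L^2$-isometry with operator norm $1$, uniformly in $t$.

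For the dispersive endpoint I would write $\CT(t)\phi=K_t\ast\phi$, where $K_t=\CF^{-1}\lk(e^{4\pi^2 i|\xi|^2 t}\rk)$. Evaluating this Gaussian-type inverse transform coordinate by coordinate gives $K_t(x)=c_d\,|t|^{-d/2}\,e^{\pm i|x|^2/(4t)}$, up to a unimodular constant depending only on $d$ and $\sgn t$; in particular the modulus $|K_t(x)|=c_d\,|t|^{-d/2}$ is constant in $x$. Young's inequality then gives, for $t\neq 0$, $|\CT(t)\phi|_{L^\infty}=|K_t\ast\phi|_{L^\infty}\le |K_t|_{L^\infty}\,|\phi|_{L^1}=c_d\,|t|^{-d/2}\,|\phi|_{L^1}$, first for $\phi\in L^1\cap L^2$ and then by density.

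Finally I would interpolate. Writing $\tfrac1{p'}=\tfrac{1-\theta}1+\tfrac\theta2$ and $\tfrac1p=\tfrac{1-\theta}{\infty}+\tfrac\theta2$, the parameter $\theta\in[0,1]$ ranges exactly over the admissible pairs, with $\theta=0$ giving $(p',p)=(1,\infty)$ and $\theta=1$ giving $(2,2)$, so $p'\in[1,2]$ as required. Riesz--Thorin bounds the operator norm of $\CT(t):L^{p'}\to L^{p}$ by $(c_d|t|^{-d/2})^{1-\theta}\cdot 1^{\theta}$. Since $\tfrac1{p'}-\tfrac1p=1-\theta$, the exponent of $|t|$ is $-\tfrac d2(1-\theta)=-\tfrac d2\lk(\tfrac1{p'}-\tfrac1p\rk)$, which, using $\tfrac1{p'}=1-\tfrac1p$, equals $-d\lk(\tfrac12-\tfrac1p\rk)$; this is precisely the claimed estimate, and continuity of the operator follows from its boundedness.

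The only genuinely delicate point is the kernel computation: the Fresnel integral defining $K_t$ is not absolutely convergent, so it must be interpreted via analytic continuation, regularizing the multiplier as $e^{(4\pi^2 i t-\ep)|\xi|^2}$, computing the resulting honest Gaussian kernel, and letting $\ep\downarrow 0$. This is where one must argue carefully to justify both the convolution representation and the bound $|K_t|_{L^\infty}=c_d\,|t|^{-d/2}$; everything else is the textbook interpolation argument.
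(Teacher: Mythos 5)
Your proof is correct: Plancherel for the $L^2$ isometry, the Fresnel kernel $K_t$ with $|K_t(x)|=c_d|t|^{-d/2}$ for the $L^1\to L^\infty$ bound (with the regularization $e^{(4\pi^2 it-\ep)|\xi|^2}$, $\ep\downarrow 0$, correctly flagged as the delicate step), and Riesz--Thorin to interpolate, with the exponent arithmetic $\tfrac d2(1-\theta)=\tfrac d2\bigl(\tfrac1{p'}-\tfrac1p\bigr)=d\bigl(\tfrac12-\tfrac1p\bigr)$ checking out. The paper itself gives no proof of this lemma --- it is recalled as a well-known deterministic result (it is the classical dispersive estimate, cf.\ Cazenave's book) --- and your argument is exactly the standard proof underlying that citation, so there is nothing to compare beyond noting that you have supplied the proof the paper delegates to the literature.
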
}

A pair $(p,q)$ is called admissible if
\DEQSZ\label{admissible}
 \lk\{\baray 2\le p<{2d\over d-2},  \;&
\mbox{ if } & d\ge 3,
\\
2\le p<\infty, \; & \mbox{ if } &d=2,
\\2\le p\le \infty, \;& \mbox{ if } &d=1,
\earay\rk\}Ê\quad \mbox{and} \quad \frac 2q=\frac d2 -\frac dp.
\EEQSZ

\medskip

Let $s\in\RR$, $1\le p,q\le \infty$. For an interval $I\subset[0,\infty)$ let $L^q(I;H_p^s(\RR^d))$ be the space of all measurable functions
$f:I\times \RR^d\to \RR$ such that
$$
| f|_{L^q  (I;H^s_p)}:=\lk( \int_I  | f(t)|_{H_p^s}^q\, dt \rk) ^\frac 1q <\infty.
$$
Let us define the convolution operator
\DEQSZ\label{det-con}
\mT u (t) := \int_0^t \CT(t-r) u(r)\, dr, \quad t\ge 0.
\EEQSZ
By means of Lemma \ref{strichatz}, the following Corollary can be proven.
\begin{corollary}\label{caz_simple}(see \cite[Theorem 2.3.3]{cazenave})
\renewcommand{\sr}{s}
Let $(p_0,q_0), (p_1,q_1)\in [2,\infty)\times[2,\infty)$ be two admissible pairs.
Then for all $T>0$ and $s\in\RR$ we have
$$
\lk| \mT u \rk| _{L^{q_0}  (0,T;H^\sr_{p_0})} \le
C \lk|  u \rk| _{L^{q_1'}  (0,T;H^\sr_{p_1'})}.
$$
\end{corollary}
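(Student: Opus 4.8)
The plan is to prove this inhomogeneous (retarded) Strichartz estimate in the classical way: reduce to the $L^2$-based case $s=0$, establish the homogeneous estimate together with its dual through a $TT^\ast$ argument fed by the dispersive bound of Lemma~\ref{strichatz}, and finally convert the non-causal convolution $\int_0^T$ into the causal one $\int_0^t$ by the Christ--Kiselev lemma.

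First I would eliminate the regularity index $s$. Let $J^s:=(1-\Delta)^{s/2}$ denote the Bessel potential, an isometric isomorphism $H^s_p(\RR^d)\to L^p(\RR^d)$ for every $p\in(1,\infty)$. Since $J^s$ and $\CT(t)$ are Fourier multipliers (with symbols $(1+4\pi^2|\xi|^2)^{s/2}$ and $e^{4\pi^2 i|\xi|^2 t}$ respectively) they commute, so $J^s\,\mT u=\mT\,(J^s u)$. Applying $J^s$ pointwise in time shows that the claimed bound for general $s$ is equivalent to the same bound with $s=0$ and $u$ replaced by $J^s u$. Hence it suffices to prove $|\mT u|_{L^{q_0}(0,T;L^{p_0})}\le C\,|u|_{L^{q_1'}(0,T;L^{p_1'})}$.

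Next I would produce two endpoint-free ingredients, one for each admissible pair. Fix an admissible pair $(p,q)$ and set $S\phi:=\CT(\cdot)\phi$; its $L^2$-adjoint is $S^\ast g=\int_0^T\CT(-s)g(s)\,ds$, so $SS^\ast g(t)=\int_0^T\CT(t-s)g(s)\,ds$. By Lemma~\ref{strichatz} and Minkowski's integral inequality,
\[
|SS^\ast g(t)|_{L^{p}}\le c\int_0^T |t-s|^{-d(\frac12-\frac1p)}\,|g(s)|_{L^{p'}}\,ds,
\]
and admissibility gives $d(\tfrac12-\tfrac1p)=\tfrac2q$. As \eqref{admissible} excludes the endpoint we have $q>2$, hence $0<\tfrac2q<1$, and the right-hand side is a one-dimensional Riesz potential in time. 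The Hardy--Littlewood--Sobolev inequality with exponents $(q',q)$ (the balance $\tfrac1q=\tfrac1{q'}+\tfrac2q-1$ holds) gives $|SS^\ast g|_{L^q(0,T;L^p)}\le C\,|g|_{L^{q'}(0,T;L^{p'})}$. By the $TT^\ast$ principle this is equivalent to the homogeneous estimate $|\CT(\cdot)\phi|_{L^q(0,T;L^p)}\le C\,|\phi|_{L^2}$ together with its dual $\big|\int_0^T\CT(-s)g(s)\,ds\big|_{L^2}\le C\,|g|_{L^{q'}(0,T;L^{p'})}$, with constants independent of $T$.

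Finally I would combine the two pairs and truncate. Writing $\widetilde{\mT}u(t):=\int_0^T\CT(t-s)u(s)\,ds=\CT(t)\,\psi$ with $\psi:=\int_0^T\CT(-s)u(s)\,ds$, the homogeneous estimate for $(p_0,q_0)$ followed by the dual estimate for $(p_1,q_1)$ yields
\[
|\widetilde{\mT}u|_{L^{q_0}(0,T;L^{p_0})}\le C\,|\psi|_{L^2}\le C\,|u|_{L^{q_1'}(0,T;L^{p_1'})}.
\]
The operator $\mT$ of \eqref{det-con} is the causal truncation of $\widetilde{\mT}$, and passing from $\widetilde{\mT}$ to $\mT$ is precisely the Christ--Kiselev lemma, which upgrades boundedness $L^{a}\to L^{b}$ of the full convolution to boundedness of the retarded one whenever $a<b$; here $a=q_1'$, $b=q_0$, and since \eqref{admissible} forces $q_0>2$ and $q_1>2$ we get $q_1'<2<q_0$. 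The hard part is exactly this strict inequality $q_1'<q_0$ (equivalently $q_0,q_1>2$, i.e.\ $0<2/q<1$ in the HLS step): it is what both makes the time Riesz potential summable and lets Christ--Kiselev convert the two-sided convolution into the retarded one, and it is why the admissible ranges are taken strictly inside the non-endpoint region of \eqref{admissible}; everything else is routine $TT^\ast$-and-duality bookkeeping.
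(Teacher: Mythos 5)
Your argument is correct, but note that the paper itself contains no proof of Corollary \ref{caz_simple}: it defers entirely to \cite[Theorem 2.3.3]{cazenave}, and your reduction of the $H^s$ case to $s=0$, by commuting the Bessel potential $(1-\Delta)^{s/2}$ with $\CT(t)$, is precisely the step that upgrades Cazenave's $L^p$ statement to the $H^s_p$ scale in which the corollary is phrased. Where you genuinely diverge from the cited proof is after the common core (dispersive bound of Lemma~\ref{strichatz}, $TT^{\ast}$ argument, Hardy--Littlewood--Sobolev): Cazenave never passes to the non-causal operator and never invokes Christ--Kiselev. He works with the retarded integral $\mT$ throughout and establishes three truncation-compatible cases -- the diagonal case $(p_0,q_0)=(p_1,q_1)$ directly from the dispersive bound and HLS (which is insensitive to the causal cut-off $s<t$), the case $L^{q_1'}(0,T;L^{p_1'})\to L^{\infty}(0,T;L^2)$ by applying the dual homogeneous estimate to $1_{[0,t]}u$ for each fixed $t$, and the case $L^{1}(0,T;L^2)\to L^{q_0}(0,T;L^{p_0})$ by Minkowski's inequality plus the homogeneous estimate -- and then reaches arbitrary pairs of admissible exponents by complex interpolation, which succeeds because all dual admissible exponent pairs $(1/p_1',1/q_1')$ lie on a single line through the point $(1/2,1)$ representing $L^1(L^2)$. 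Your Christ--Kiselev shortcut replaces this case analysis; its price is the strict gap $q_1'<q_0$, which does hold here since the hypotheses $q_0,q_1\in[2,\infty)$ together with \eqref{admissible} exclude the endpoints, forcing $q_0,q_1>2$ and hence $q_1'<2<q_0$ (the same strict inequality $0<2/q<1$ that your HLS step requires, as you correctly flag). As to what each route buys: Cazenave's is elementary and self-contained, using only duality and interpolation, and also covers the limiting pair $(2,\infty)$, which the corollary as stated excludes anyway; yours is shorter and more modular, applying verbatim to any group satisfying the dispersive decay of Lemma~\ref{strichatz} and to any exponent pairs with $q_1'<q_0$, admissible or not.
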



In order to treat the nonlinearity,
let $F:\mathbb{C} \to \mathbb{C}$ be given by $F(u)=|u|^{\alpha-1}u$ and
let $\mF$ be the convolution operator given by
\DEQSZ\label{det-conF}
\mF u (t) := \int_0^t \CT(t-r)F( u(r))\, dr.
\EEQSZ
\begin{remark}\label{f_continuous}
We would like to mention that the Nemitskii operator associated to $F$, defined by
$$
\CF (u) (x) := F(u(x)), \quad u: \RR^d \to \mathbb{C},
$$
is a continuous operator from $L^{\alpha+1} (\RR^d )$ to $L^{\alpha+1\over \alpha}(\RR^d )$ and  hence
from $H^1 (\RR^d )$ to $L^{\alpha+1\over \alpha}(\RR^d )$.
\end{remark}


\begin{proposition}\label{caz_det}
Assume $1<\alpha<1+4/d$.
Let $p=\alpha+1$ and $q=4(\alpha+1)/d(\alpha-1)$.
Then  we have for all $T>0$ and any admissible pairs $(m,l)$,
$$
\lk| \mF u \rk| _{L^{l} (0,T;L^{m})} \le {C }(T) \,  \lk|  u
\rk|^\alpha _{L^{q} (0,T;L^ p)} ,
$$
with $C(T)\to 0$ as $T\to 0$.
\end{proposition}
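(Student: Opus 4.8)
The plan is to recognize $\mF$ as the composition of the linear Strichartz convolution operator $\mT$ with the Nemitskii operator $\CF$, and then to reduce everything to Corollary \ref{caz_simple} followed by two elementary H\"older steps, one in space and one in time. Concretely, since $\mF u(t) = \int_0^t \CT(t-r) F(u(r))\,dr = \mT(\CF(u))(t)$, I would apply Corollary \ref{caz_simple} with $s=0$, output pair $(p_0,q_0)=(m,l)$ (an arbitrary admissible pair), and input pair $(p_1,q_1)=(p,q)$ with $p=\alpha+1$ and $q=4(\alpha+1)/d(\alpha-1)$. The first thing to verify is that $(p,q)$ is itself admissible: a direct computation gives $\tfrac2q = \tfrac d2 - \tfrac dp = d\big(\tfrac12-\tfrac1{\alpha+1}\big) = \tfrac{d(\alpha-1)}{2(\alpha+1)}$, which is exactly the relation defining $q$ in \eqref{admissible}, while $\alpha<1+4/d$ forces $p=\alpha+1 < 2+4/d < 2d/(d-2)$ and $\alpha>1$ gives $p>2$, so that $(p,q)\in(2,\infty)\times(2,\infty)$ is admissible. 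Corollary \ref{caz_simple} then yields
$$
\lk|\mF u\rk|_{L^l(0,T;L^m)} \le C\,\lk|\CF(u)\rk|_{L^{q'}(0,T;L^{p'})}.
$$

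Second, I would dispose of the spatial norm. Since $|\CF(u)(x)| = |u(x)|^\alpha$ and $p' = (\alpha+1)/\alpha$, one has $\alpha p' = \alpha+1 = p$, whence for each fixed time $r$,
$$
|\CF(u(r))|_{L^{p'}} = \lk(\int_{\RR^d} |u(r,x)|^{\alpha p'}\,dx\rk)^{1/p'} = |u(r)|_{L^p}^{\,p/p'} = |u(r)|_{L^p}^{\,\alpha}.
$$

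Third, and this is the step carrying the whole subcriticality restriction, I would handle the time integration by H\"older's inequality. The previous two steps give
$$
\lk|\CF(u)\rk|_{L^{q'}(0,T;L^{p'})} = \lk(\int_0^T |u(r)|_{L^p}^{\,\alpha q'}\,dr\rk)^{1/q'},
$$
and since $\alpha<1+4/d$ is equivalent to $q>\alpha+1$, hence to $\alpha q' < q$, H\"older in time with exponents $q/(\alpha q')$ and its conjugate produces
$$
\lk(\int_0^T |u(r)|_{L^p}^{\,\alpha q'}\,dr\rk)^{1/q'} \le T^{\,1-(\alpha+1)/q}\;\lk|u\rk|_{L^q(0,T;L^p)}^{\,\alpha} = T^{\,1-\frac{d(\alpha-1)}{4}}\;\lk|u\rk|_{L^q(0,T;L^p)}^{\,\alpha}.
$$
Setting $C(T) = C\,T^{1-d(\alpha-1)/4}$ then finishes the proof, and the exponent $1-d(\alpha-1)/4$ is strictly positive precisely because $\alpha<1+4/d$, so that $C(T)\to0$ as $T\to0$.

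The genuinely delicate point is not any single estimate but the bookkeeping of exponents: the whole argument works only because the single hypothesis $\alpha<1+4/d$ simultaneously guarantees (a) that the chosen pair $(p,q)$ is admissible, so that Corollary \ref{caz_simple} applies, and (b) that $\alpha q'<q$, so that the time-H\"older step yields a positive power of $T$. I expect the main obstacle to be verifying these two exponent inequalities carefully, including the endpoint behaviour when $d=1$ (where admissible pairs may include $m=\infty$ and one must stay inside the range in which Corollary \ref{caz_simple} is stated); the remaining manipulations are routine.
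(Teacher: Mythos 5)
Your proof is correct and follows essentially the same route the paper intends: the paper states Proposition \ref{caz_det} without writing out a proof (it is the standard Cazenave-type argument), and its proof sketch of the $H^1$ variant, Proposition \ref{caz_det_H1}, confirms the identical scheme — Corollary \ref{caz_simple} applied with the admissible pair $(p,q)=\bigl(\alpha+1,\,4(\alpha+1)/d(\alpha-1)\bigr)$, then H\"older in space giving $|F(u)|_{L^{p'}}=|u|_{L^p}^{\alpha}$, then H\"older in time using $\alpha q'<q$ to produce the factor $T^{1-d(\alpha-1)/4}\to 0$. Your exponent bookkeeping, in particular the verification that the single hypothesis $\alpha<1+4/d$ guarantees both the admissibility of $(p,q)$ and the strictly positive power of $T$, is accurate.
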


The proposition can be extended to $H^1(\RR^d)$.
\begin{proposition}\label{caz_det_H1}
Assume
$$
\bcase 1<\alpha<{d+2\over d-2} ,&\;  \mathrm{if}\; d>2;
\\ 1<\alpha<\infty ,&\;  \mathrm{if} \;d=1,2.
\ecase
$$
Let
$$p^\sharp={ d(\alpha+1) \over d+\alpha-1}\quad \mbox{and} \quad q^\sharp={4(\alpha+1)\over (d-2)(\alpha-1)}.
$$
Then we have for all $T>0$,
$$
\lk| \mF u \rk| _{L^{q^\sharp}(0,T;H^ 1 _{p^\sharp} )} \le C\,  \lk|  u
\rk|^\alpha _{L^{q^\sharp} (0,T;H^ 1 _{p^\sharp})} .
$$
\end{proposition}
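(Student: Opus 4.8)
The plan is to reduce the $H^1_{p^\sharp}$ Strichartz bound for the Duhamel term $\mF$ to a pointwise-in-time Hölder/Sobolev estimate on the nonlinearity $F(u)=|u|^{\alpha-1}u$, exactly paralleling the proof of Proposition \ref{caz_det} but carrying one spatial derivative through. The first fact I would record is that the pair $(p^\sharp,q^\sharp)$ is itself admissible in the sense of \eqref{admissible}: a direct computation gives $\frac d2-\frac d{p^\sharp}=\frac{(d-2)(\alpha-1)}{2(\alpha+1)}=\frac{2}{q^\sharp}$. Since $\CT(t-r)$ is a Fourier multiplier it commutes with derivatives, so $\mF u=\mT(F(u))$ and I may apply Corollary \ref{caz_simple} at regularity $s=1$ with both admissible pairs equal to $(p^\sharp,q^\sharp)$, which yields
$$
|\mF u|_{L^{q^\sharp}(0,T;H^1_{p^\sharp})}\le C\,|F(u)|_{L^{(q^\sharp)'}(0,T;H^1_{(p^\sharp)'})}.
$$
Everything then rests on estimating the right-hand side.

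For the spatial estimate I would use that for $\alpha>1$ the map $F$ is $C^1$ as a map of $\RR^2$, with $|F(\xi)|=|\xi|^\alpha$ and $|F'(\xi)|\le\alpha|\xi|^{\alpha-1}$, so that the chain rule gives $|\nabla F(u)|\le\alpha|u|^{\alpha-1}|\nabla u|$ a.e. Writing $\rho=\frac{dp^\sharp}{d-p^\sharp}=\frac{d(\alpha+1)}{d-2}$ for the Sobolev exponent of $H^1_{p^\sharp}$, Hölder's inequality with $\frac1{(p^\sharp)'}=\frac{\alpha-1}{\rho}+\frac1{p^\sharp}$ gives
$$
\big|\,|u|^{\alpha-1}|\nabla u|\,\big|_{L^{(p^\sharp)'}}\le |u|_{L^\rho}^{\alpha-1}\,|\nabla u|_{L^{p^\sharp}},
$$
while the zero-order term is handled by $|F(u)|_{L^{(p^\sharp)'}}=|u|_{L^{\alpha(p^\sharp)'}}^{\alpha}$ together with the inclusions $p^\sharp\le\alpha(p^\sharp)'\le\rho$ (equivalent to $\alpha\ge1$ and $\alpha\ge-1$ respectively). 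The Sobolev embedding $H^1_{p^\sharp}(\RR^d)\hookrightarrow L^q(\RR^d)$ for $q\in[p^\sharp,\rho]$ then collapses both contributions into $|F(u(t))|_{H^1_{(p^\sharp)'}}\le C\,|u(t)|_{H^1_{p^\sharp}}^\alpha$ for a.e.\ $t$.

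It remains to integrate in time. Raising the previous bound to the power $(q^\sharp)'$ and integrating, I would invoke Hölder on the finite interval $(0,T)$, which is admissible precisely because the subcriticality assumption is equivalent to $\alpha(q^\sharp)'\le q^\sharp$: this last inequality reduces to $q^\sharp\ge\alpha+1$, i.e.\ $(d-2)(\alpha-1)\le4$, i.e.\ $\alpha\le\frac{d+2}{d-2}$. This gives
$$
|F(u)|_{L^{(q^\sharp)'}(0,T;H^1_{(p^\sharp)'})}\le C\,T^{\theta}\,|u|^{\alpha}_{L^{q^\sharp}(0,T;H^1_{p^\sharp})},\qquad \theta=\tfrac1{(q^\sharp)'}-\tfrac{\alpha}{q^\sharp}\ge0,
$$
which combined with the Strichartz step completes the estimate (with $\theta>0$ strictly subcritical, so one even recovers a constant vanishing as $T\to0$, and $\theta=0$ at the critical exponent).

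The main obstacle I anticipate is the rigorous justification of the chain-rule bound $|\nabla F(u)|\le\alpha|u|^{\alpha-1}|\nabla u|$ at the regularity $u\in H^1_{p^\sharp}$, since $F$ is not smooth at the origin; this is handled by approximating $F$ by smooth functions with controlled first derivatives and passing to the limit, a standard but somewhat delicate point. The remaining work is bookkeeping: verifying that the Hölder and Sobolev exponents close up and that the subcriticality window is exactly the one making $q^\sharp\ge\alpha+1$. For $d=1,2$ the exponent formulas degenerate (for $d=2$ one has $p^\sharp=2$, $q^\sharp=\infty$, the endpoint pair, and for $d=1$ the relevant admissible pair and the embedding $H^1_{p^\sharp}\hookrightarrow L^\infty$ must be chosen by hand), so I would treat $d>2$ as the representative case and indicate the minor modifications needed in low dimensions.
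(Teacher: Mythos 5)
Your proof is correct and follows essentially the same route as the paper: reduce via the Strichartz estimate (Corollary \ref{caz_simple} with $s=1$ and the admissible pair $(p^\sharp,q^\sharp)$) to a pointwise-in-time bound on $F(u)$, whose gradient term is handled by exactly the H\"older--Sobolev splitting $\bigl|\,|u|^{\alpha-1}\nabla u\,\bigr|_{L^{(p^\sharp)'}}\le |u|_{L^\rho}^{\alpha-1}|\nabla u|_{L^{p^\sharp}}$ that the paper uses (your $\rho=(\alpha-1)l$ in the paper's notation). The extra details you supply --- verification of admissibility, the zero-order term, and the H\"older step in time giving $q^\sharp\ge\alpha+1$ --- are exactly what the paper leaves implicit in its reference to Proposition \ref{caz_det}, and your caveat about the degenerate exponents for $d=1,2$ applies equally to the paper's own argument.
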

\begin{proof}
The proof is similar to the proof of Proposition \ref{caz_det}. One only has to take into account the following
estimate, which follows from H\"older's inequality : 
$$
| |u|^ {\alpha-1}\nabla u|_{L^ {{p}^{\sharp'}}} \le c \, | |u|^ {\alpha-1}|_{L^ {l}} |\nabla u|_{L^ {p^\sharp}}\le c \, | u |_{L^ {({\alpha-1})l}}^{\alpha-1} |\nabla u|_{L^ {p^\sharp}}
\le c |\nabla u|_{L^ {p^{\sharp}}}^{\alpha}.
$$
Here $\frac 1l =1-\frac 2{p^{\sharp}}$ and $\frac 1 {(\alpha-1)l}=\frac 1{p^{\sharp}}-\frac 1d$.
Therefore, $(\alpha+1)/p^{\sharp}=(d+\alpha-1)/d$.
\end{proof}

\section{Existence and uniqueness results for
finite \levy measure}\label{proof1}

In this section we show the existence and uniqueness of a solution to \eqref{itoeqn} for a finite \levy measure.
Here, the representation of the \levy process as a finite sum over its jumps is essential.
Using this representation,  the existence and uniqueness of the solution in a pathwise sense can be shown.

\renewcommand{\sn}{{s}}
\begin{tlemma}\label{local_ex}
Let us assume that the \levy measure $\nu$ is finite,  in particular $\nu(Z)=\rho$, and that the Hypothesis \ref{hypogeneral}
is satisfied.
 Let $u_0\in L^2(\Omega;H^ {1}_{2} (\RR^d))$ be fixed and ${\mathcal F}_0$-measurable.
 \medskip

Then, if $\lambda>0$ and
$$1\le \alpha< \bcase  1+4/(d-2)& \mbox{ for } d>2,
\\  + \infty& \mbox{ for }  d=1 \mbox{ or } 2,\ecase
$$
Equation \eqref{itoeqn} has a unique mild  $H^1_{2}(\RR^d)$--valued
solution $u$; in particular, $u$ is $\PP$-a.s.\ \cadlag in  $H^1_{2}(\RR^d)$.
In addition,
for any $T>0$
there exists a constant $C=C(T,C_0(\nu),C_3(\nu),C_g,C_h)$ such that
$$
\EE\sup_{0\le t\le T} |u(t)|_{L^2 }^2 \le C\, \EE |u_0|_{L^2 }^2
$$
and there exists a constant  $C=C(T,C_0(\nu),C_1(\nu),C_g,C_h)>0$ such that
\begin{equation}
\label{energybound}
\EE \sup_{0\le t\le T}  \CH(u(t)) \le  C\, (1+\EE \CH(u_0)).
\end{equation}

\medskip%
\begin{enumerate}
  \item If Hypothesis \ref{hypoas} is satisfied, then  for any $t>0$ we have $\PP$--a.s.
$$
 |u(t)|_{L^2 }^2 = |u_0|_{L^2 }^2.
$$

  \item If Hypothesis \ref{hypomean} is satisfied, then  for any $t>0$ we have
$$
\EE |u(t)|_{L^2 }^2 = \EE |u_0|_{L^2 }^2.
$$
\item If $ \EE  \int_{\RR^d } x^2 |u_0(x)|^2 \, dx <\infty$ and Hypothesis \ref{hypogeneral} (ii)-(c) is satisfied, 
then there exists a constant $C=C(T,C_2(\nu),C_g,C_h)>0$ such that
$$
\EE \sup_{0\le t\le T}  \int_{\RR^d } x^2 |u(t,x)|^2 \, dx \le C\left[1+  \EE  \int_{\RR^d } x^2 |u_0(x)|^2 \, dx +\EE \CH(u_0)\right].
$$

\end{enumerate}
\end{tlemma}
\begin{proof}
%

%
Let $\rho=\nu(Z)$, let $\{ \tau_n:n\in\NN\}$ be a family of independent exponential distributed random variables with parameter $\rho$,
let
\DEQSZ\label{stoppingtimes}
 T_n=\sum_{j=1}^n \tau_j,\quad n\in\NN,
\EEQSZ
and  let $\{ N(t):t\ge 0\}$ be the counting process defined by
$$ N(t) :=\sum_{j=1}^\infty 1_{[T_j,\infty)}(t),\quad t\ge 0.
$$
Observe, for any $t>0$, $N(t)$ is a Poisson distributed random variable with parameter $\rho t$.
Let $\{ Y_n:n\in \NN\}$ be a family of independent, $\nu/\rho$ distributed random variables. Then
the \levy process $L$ given by
$$
L(t)=\int_0^t \int_Z z\,\tilde \eta(dz,ds),\quad t\ge 0,
$$
can be represented as
$$
L(t) = \bcase - z_\nu t & \mbox{ for } N(t)=0,\\
\sum_{j=1}^{N(t)} Y_j -z_\nu t & \mbox{ for } N(t)>0,
\ecase
$$
where $z_\nu= \int_Z G(z)\, \nu(dz)$ (see e.g.\ \cite[Chapter 3]{tankov}).

Now, by a modification of \cite[Theorem 4.4.1]{cazenave}
there exists a solution of the deterministic equation
\DEQSZ\label{itoeqn-finite}
\hspace{3cm}
 \lk\{ \baray \lqq{ i \, \partial_t u(t)  -  \Delta u(t,x) +\lambda |u(t,x)|^{\alpha-1} u(t,x) \, \hspace{2cm}}&&
\\&=& u(t,x) \int_Z \, \hh (z(x)) \mathcal{}\, \nu (dz) - u(t,x) \, z_\nu ,\\
u(0)&=& u_0, \earay \rk.
 \EEQSZ
which is in $C(\RR_+;H^1_2(\RR^d))$,
with $u(T_1)\in H^1 _2(\RR^d)$.
Indeed, setting $\mg(u)=\int_Zu\hh( z)\, \nu(dz)-uz_{\nu}$, it is not difficult to check that for any $u,v \in
L^{\infty}(0,T;H^1_2(\RR^d))$ we have
$$
|\mg(u)|_{L^{\infty}(0,T;H^1_2)}\le C_{\nu} |u|_{L^{\infty}(0,T;H^1_2)}
$$
and
$$
|\mg(u)-\mg(v)|_{L^{\infty}(0,T;L^2)}\le C_{\nu} |u-v|_{L^{\infty}(0,T;L^2)},
$$
where the Lipschitz constant is given by
$$
{C_\nu = \int_Z |z|_{H^{1}_2} \,\nu(dz) +|z_\nu|_{H^1_2}.}
$$
Hence, setting $p=\alpha +1$ and $q=4(\alpha+1)/d(\alpha-1)$ so that $(p,q)$ is an admissible pair, one may use
as in \cite[Theorem 4.4.1]{cazenave} a fixed point in
$$
\begin{array}{rl}
E:= &\{v \in L^{\infty}(0,T;H^1_2(\RR^d))\cap L^q(0,T;H^1_p(\RR^d)), \\ &\; |v|_{L^{\infty}(0,T;H^1_2)}\le M; \;
|v|_{L^q(0,T;H^1_p)}\le M \}
\end{array}
$$
equipped with the distance
$$
d(u,v)=|u-v|_{L^{\infty}(0,T;L^2)}+|u-v|_{L^q(0,T;L^p)},
$$
and a constant $M$ depending on the initial condition (see \cite[p.\ 95,$\uparrow 2$]{cazenave}).
For sufficiently small $T>0$, we  obtain the existence of a unique local solution.
By uniform bounds, this local solution can be globalized.
\del{Finally, by Hypothesis \ref{hypogeneral}-(ii)-(a), the entity $\sup_{0\le t\le T} \int_{\RR ^d} |x| ^2|u(t,x)| ^2\, dx $ can be estimated.
Indeed, one may check that
\begin{eqnarray*}
\lqq{ \frac{d}{dt} \int_{\RR^d} x^2 |u(t,x)|^2 \, dx =2\int_{\RR^d} x\cdot\Im (  u \nabla \bar u) \, dx} && \\
& & {}+\int_Z\int_{\RR^d} x^2|u(t,x)|^2 \Im (h(z(x))\, dx  \, \nu(dz)
\\
&&{}+\int_Z\int_{\RR^d} x^2|u(t,x)|^2 \Im (g(z(x))\, dx  \, \nu(dz).
\end{eqnarray*}
The H\"older inequality and the fact that $\CH(u(t))$ is finite, gives that
$$ \int_{\RR^d} x^2 |u(t,x)|^2 \, dx
$$
is finite.}
%
\medskip

Let us denote the solution by $u_1$. Since at time $T_1$ a jump with size $-iu(T_1) Y_1$ happens,
we put
$u_2^0=u_1(T_1)(1-\Red{i Y_1})$ and consider a second process, starting at time $0$ in point $u_2^0$.
By Hypothesis \ref{hypogeneral}-(i), we know, $u_2^0\in H^1 _2(\RR^d)$.
Hence, again by Theorem \cite[Theorem 4.4.1]{cazenave} and the previous arguments,
there exists a unique global solution of the deterministic equation
\DEQSZ\label{itoeqn-finite2}
\hspace{2cm} \lk\{ \baray \lqq{ i \, \partial_t u(t)  -  \Delta u(t,x) +\lambda |u(t,x)|^{\alpha-1} u(t,x) \, \hspace{2cm}}&&
\\&=& u(t,x) \int_Z \,\hh( z(x)) \mathcal{}\, \nu (dz) - u(t,x) \, z_\nu ,\\
u(0)&=& u_2^ 0 =u_1(T_1)(1-i Y_1). \earay \rk.
 \EEQSZ
Let us denote the solution on $[0,T_2-T_1]$ by $u_2$.
Iterating this step we get a sequence of solutions $\{u_n:n\in \NN\}$. To be more precise,
let us assume that we are given a solution $u_{n-1}$ on the time interval $[0,T_{n-1}-T_{n-2}]$, where
the family of stopping times $\{T_n:n\in\NN\}$ is defined in \eqref{stoppingtimes}.
Let  $  u_{n}^ 0 =u_{n-1}(T_{n-1}-T_{n-2})(1-iY_{n-1})$.
Then, we denote by $u_n$ the solution of the following (deterministic) problem
\DEQSZ\label{itoeqn-finiten}
\hspace{2cm}\lk\{ \baray \lqq{ i \, \partial_t u(t)  -  \Delta u(t,x) +\lambda |u(t,x)|^{\alpha-1} u(t,x) \, \hspace{2cm}}&&
\\&=& u(t,x) \int_Z \,\hh( z(x)) \mathcal{}\, \nu (dz) - u(t,x) \, z_\nu ,\\
u(0)&=& u_{n}^ 0, 
 \earay \rk.
 \EEQSZ
 So, for each $n\in\NN$ we can construct a solution $u_n$ on the time interval $[0,T_{n}-T_{n-1}]$. In the next step we glue these solutions together
 by putting
for $t\in [T_{n-1},T_{n})$, $n\in\NN$
$$
u(t) := u_n(t-T_{n-1}).
$$
Let us observe, that the jumps take place at the end points at each interval and will be taken into account, by taking as initial starting point for the next solution $u_{n}^0$,
the solution $u_{n-1}$ at the end point $T_{n-1}$ plus the jump. In particular,  we put $u_n(0)=u_{n-1}(T_{n-1}-T_{n-2}) -i u_{n-1}(T_{n-1}-T_{n-2})Y_{n-1}$.
It is straightforward to show, that $u$ solves \eqref{itoeqn}.
Since $\PP\lk( N(T)<\infty\rk)=1$, the solution $u$ is a.s.\ defined on $[0,T]$.
The \cadlag property follows by the fact, that  $\lim_{t\downarrow T_j} u(t)=u_j^0$ and the limit $\lim_{t\uparrow T_j} u(t)$ exists
in $H^1_2(\RR^d)$.

Summing up, we have shown the existence of a unique solution $u$ belonging $\PP$--a.s.\ to $\DD(0,T;H^ 1 _2(\RR^ d ))\cap L^ q(0,T;H^ 1_p(\RR^ d ))$.

\medskip


Next, we will show that under the hypothesis of the Lemma, the mass may be estimated, i.e.\ for $0\le t\le T$,
$$
\EE\, \sup_{0\le t\le T} \CE(u(t))=\EE \sup_{0\le t\le T} |u(t)|_{L^2 }^2 \le C\EE |u_0|_{L^2 }^2=C\EE \,\CE(u_0),
$$
where $C=C(T,C_0(\nu),C_3(\nu),C_g,C_h)$.
In a first step we are aiming to prove
\DEQSZ\label{toshowl22}
\EE\sup_{0\le s \le t} \lk[ |u (s)|_{L ^2} ^2 -|u(0)|^ 2 _{L^ 2 }\rk]
&\le &  C\EE \lk( \int_0^ t |u(s)|^ 4 _{L^ 2 }  \, ds\rk) ^ \frac 12
,
\EEQSZ
where $C=C(C_0(\nu),C_3(\nu),C_g,C_h)$.
Assume for the time being that \eqref{toshowl22} is true. Then, it follows by the H\"older inequality
\DEQS
 \EE \sup_{0\le s\le t}\lk[ |u (s)|_{L ^2} ^2 -|u(0)|^ 2 _{L^ 2 }\rk]
&\le &  C \sqrt{t} \EE  \sup_{0\le s\le t}  |u (s)|_{L ^2} ^2. 
\EEQS
If $t^ \ast$ is small enough that  $C\,\sqrt{t^\ast}\le \frac 12 $, then 
$$
\EE \sup_{0\le s\le t^ \ast} \lk[ |u (s)|_{L ^2} ^2\rk]\le 2 \EE |u(0)|^ 2 _{L^ 2 }.
$$
Iterating this step 
 we get
$$
\EE \sup_{0\le s\le T}   |u (s)|_{L ^2} ^2\le C\, \EE |u(0)|^ 2 _{L^ 2 },
$$
where $C=C(T,C_0(\nu),C_3(\nu),C_g,C_h)$.

\medskip

Let us show estimate \eqref{toshowl22}. If we denote by $u ^c$ the continuous part of $u$ and put $f(z)=(-iG(z))$, we get by the It\^o formula for a twice Frechet differentiable function $\Phi:L ^2(\RR ^d)\to\RR$
\DEQS
\lqq{
d\Phi (u(t))= \Phi '(u(t)) du ^c (t)}
&&\\
&& +\int_Z\left[ \Phi (u(t^-)(1+f(z)))-\Phi (u(t^-)\right]\tilde  \eta(dz,dt)
\\
&&{}+ \int_Z\left[ \Phi (u(t^-)(1+f(z)))-\Phi (u(t^-)- \Phi '(u(t))\,[u(t ^-)\,  f(z)]\right]\gamma(dz,dt).
\EEQS
First, note, since on each interval the solution belongs to $H^ 1 _2(\RR^d)$,
all terms in the above  It\^ o formula are well defined.
Additionally, with
 \DEQSZ\label{f-dev}
\lqq{ {du (t^-,x) } = \lk(-i  \Delta  u (t^-,x) +i\lambda|u (t^-,x)|^{\alpha-1}
u (t^-,x) \rk) \, dt}
&&
\\ && {} -i \int_Z u (t^-,x)g(z(x))\, \tilde \eta(dz,dt)- i\int_Z u (t^-,x)\, \hh(z(x)) \,\gamma(dz, dt),\quad x\in\RR^d,\, t\ge 0 \nonumber,
\EEQSZ
one obtains 
\DEQSZ\label{goingback1}
\lqq{d|u (t)|_{L ^2} ^2
= -  \int_Z\int _{\RR ^d }
2   u (t,x) \overline{ u (t,x)} \Im(\overline{\hh (z(x)})\, dx \, \nu(dz) \, dt }
\\
\nonumber & & + \int_Z\int _{\RR ^d } u(t^-,x)\overline{ u(t^-,x)} \lk[\lk| 1-i g(z(x))\rk|^2-1  \right]\, \, dx\,\tilde \eta(dz, dt)
\\
\nonumber & & + \int_Z\int _{\RR ^d } u(t,x)\overline{ u(t,x)} \lk[\lk| 1-i g(z(x))\rk|^2-1 +2\Im \left( \overline{ g(z(x))} \right) \right]\, \, dx\,\gamma(dz, dt)
\\ 
\nonumber
&= & 2 \int_Z
\int _{\RR ^d }   u (t,x) \overline{ u (t,x)} \Im({\hh (z(x)}) \, dx\, \nu(dz) \, dt
\\
\nonumber & & + \int_Z\int _{\RR ^d } u(t^-,x)\overline{u(t^-,x)} \lk[\lk| 1-i g(z(x))\rk|^2-1  \right]\, \, dx\,\tilde \eta(dz, dt)
\\
\nonumber & & + \int_Z\int _{\RR ^d } u(t,x)\overline{u(t,x)} \lk[\lk| 1-i g(z(x))\rk|^2-1 - 2\Im \left(  g(z(x)) \right) \right]\, \, dx\,\gamma(dz, dt).
\EEQSZ

To be more precise, one has by direct calculations 
\DEQSZ\label{goingback11}
\hspace{2cm}
d|u (t)|_{L ^2} ^2
&=&  2 \int_Z
\int _{\RR ^d }   u (t,x) \overline{ u (t,x)} \Im({\hh (z(x)}) \, dx\, \nu(dz) \, dt
\\\nonumber
& &{} + \int_Z\int _{\RR ^d } u(t^-,x)\overline{u(t^-,x)} \lk[\lk| g(z(x))\rk|^2 + 2 \Im(g(z(x)))  \right]\, \, dx\,\tilde \eta(dz, dt)
\\
\nonumber & & {}+ \int_Z\int _{\RR ^d } u(t,x)\overline{u(t,x)} \lk| g(z(x))\rk|^2\, \, dx\,\gamma(dz, dt).
\EEQSZ
\noindent

An application of the Burkholder inequality and Minkowski inequality yields
\DEQSZ
\nonumber
\lqq{\EE\sup_{0\le s \le t} \lk[ |u (s)|_{L ^2} ^2 -|u(0)|^ 2 _{L^ 2 }\rk]
}
\\  \nonumber
&\le& \EE \lk(\int_0^ t \int_Z \lk( \int _{\RR ^d } u(t,x)\overline{u(t,x)} \lk[\lk| g(z(x))\rk|^2 + 2 \Im(g(z(x)))  \right]\, \, dx\rk)^ 2
 \nu (dz)\, ds\rk) ^ \frac 12
\\
\nonumber
&& {}+ \EE \int_0^ t  \lk| \int_Z
2 \int _{\RR ^d }   u (t,x) \overline{ u (t,x)} \Im({\hh (z(x)}) \, dx \nu(dz)\rk| \, ds
\\
\nonumber
&&
{}+  \EE \int_0^t \int_Z\int _{\RR ^d } u(s,x)\overline{ u(s,x)} \lk| g(z(x))\rk|^2\, \, dx\,\gamma(dz, dt)
.
\EEQSZ
Taking into account Hypothesis \ref{hypogeneral}, we know that there exists a constant $C=C(T,C_0,C_3,C_g,C_h)>0$ such that
\DEQS
\nonumber
\EE\sup_{0\le s \le t} \lk[ |u (s)|_{L ^2} ^2 -|u(0)|^ 2 _{L^ 2 }\rk]
&\le & C \EE \lk( \int_0^ t |u(s)|^ 4 _{L^ 2 }  \, ds\rk) ^ \frac 12,
\EEQS
that is \eqref{toshowl22} holds and the estimate on the mass follows as explained above.
\medskip

If Hypothesis \ref{hypoas} is satisfied, one easily deduces from \eqref{goingback1} that
 $ \CE(u(t))=  \CE(u(0))$ for all $t\in[0,T]$.
If only Hypothesis \ref{hypomean} is satisfied,
then one easily deduces from \eqref{goingback11} that
$\EE |u (t)|_{L ^2} ^2=\EE |u (0)|_{L ^2} ^2$ for all $t\in[0,T]$.

\noindent

\medskip


In a second step
we will prove that there exists a constant $C>0$ such that
\DEQSZ\label{hamil}
\EE \sup_{t\in [0,T]} \CH(u(t))   &\le& C\,\lk( 1+ \EE \CH (u(0))\rk).
\EEQSZ

\medskip

\del{On the other hand, one may compute, using \eqref{hamiltonian} and \eqref{itoeqn-finite} :
\DEQS
\lqq{ \frac{d}{dt} \CH(u(t)) }
\\
&=& \frac12\int_Z \int_{\RR^d} \Re \lk( \nabla \bar u \cdot  \overline{ \nabla \left(-i u \hh(z(x))\right)}\rk)\,dx\, \nu(dz) \,
\\
&&{}+\frac12\int_Z \int_{\RR^d} |u|^{\alpha+1} \Re\lk( \overline{-i g(z(x))}\rk) \,dx\, \nu(dz)
\\
&=& \frac12\int_Z \int_{\RR^d} \Re \lk( \nabla \bar u \cdot  \overline{ \nabla \left(-i u \hh(z(x))\right)}\rk)\,dx\, \nu(dz) \,
\\
&&{}+\frac12\int_Z \int_{\RR^d} |u|^{\alpha+1} \Im\lk(g(z(x))\rk) \,dx\, \nu(dz).
\EEQS
Applying Hypothesis \ref{hypogeneral} and using the sequence of estimates
$$
|\la \nabla u, \nabla(u\hh(z))\ra| \le C |u|^2 _{H^1_2}\left( |\CCC(z)|_{L^{\infty}}^2+|\nabla \CCC(z)|_{L^{\infty}}\right),
$$
 one can show that for any $T>0$,
$$\CH(u(t))\le C(T) \, C_1(\nu)(C_g+C_h),\quad \forall t\in[0,T]. $$
By these uniform bounds, the existence and uniqueness of the global solution to \eqref{itoeqn-finite} can be shown.
}

In order to justify the computation of  the It\^o formula for the Hamiltonian $\CH$, one needs also to regularize the Hamiltonian. In particular, one needs to regularize both terms in the Hamiltonian. One possibility is to define $J^\ep:=(I-\ep\Delta)^{-1}$ and to consider
$$
\CH_\ep (u) =\CH (J_\ep^\frac12 u) = \frac 12 \int_{\RR ^d}| J_\ep ^\frac 12 \nabla u| ^2 +\frac{\lambda}{\alpha+1} \int _{\RR^ d}|J ^\frac 12 _\ep u|^{\alpha+1} \, dx.
$$
Note, since $u$ belongs $\PP$-a.s.\, to $H_2^ 1(\RR^d)\subset L^{\alpha+1}(\RR^d)$  for any $t\in[0,T]$, we know $\PP$--a.s.\  $\CH_\ep(u(t))\to \CH(u(t))$ for $\ep\to 0$.
Taking into account that $\PP$--a.s.\ $u$ belongs to $\DD([0,T];H^ 1 _2(\RR^d))$, in addition, by Theorem 7.8-(b) \cite{ethier}, it follows that the process $[0,T]\ni t \mapsto \CH_\ep(u(t))$ converges to the process
$[0,T]\ni t \mapsto \CH(u(t))$ in $\DD(0,T;\RR)$.
\del{Let us also note that 
\DEQS
\lqq{ \CH_\ep'(u)\cdot v= -\la J_\ep \Delta u,v\ra }
&&
\\ && {} +\lambda \la J_\ep ( |u| ^{\alpha-1}u),v\ra
+\lambda \la J_\ep(|u| ^{\alpha-1}v,u\ra +(\alpha-1) \lambda \la J_\ep (|u| ^{\alpha-3}u\Re(u\bar u),u\ra.
\EEQS
}
\medskip

Let us apply the It\^o formula to $\CH_\ep(u(t))$.
First, note that
$$\CH_\ep'(u)\cdot v= \CH'(J_\ep^\frac12 u)\cdot J_\ep^\frac12 v
=  \la -J_\ep^\frac12 \Delta u,J_\ep^\frac12 v\ra +\lambda \la |J_\ep^\frac12 u|^{\alpha-1}J_\ep^\frac12 u,J_\ep^\frac12 v\ra.
$$
Using 
 \DEQSZ\label{f-dev-h}
\lqq{ {du (t,x) } = \lk(-i  \Delta  u (t,x) +i\lambda |u (t,x)|^{\alpha-1}
u (t,x) \rk) \, dt}
&&
\\ && {} -i \int_Z u (t^-,x)g(z(x))\, \tilde \eta(dz,dt)- i\int_Z u (t,x)\, \hh (z(x)) \,\gamma(dz, dt),\quad x\in\RR^d,\, t\ge 0, \nonumber
\EEQSZ
 \DEQSZ\label{f-devc-h}
\lqq{ {du ^c (t,x) } = \lk(-i  \Delta  u (t,x) +i\lambda |u (t,x)|^{\alpha-1}
u (t,x) \rk) \, dt}
&&
\\ && {} - i\int_Z u (t,x)\, \hh (z(x)) \,\gamma(dz, dt),\quad x\in\RR^d,\, t\ge 0, \nonumber
\EEQSZ
and the It\^ o formula
\DEQS
\lqq{
d\CH_\ep (u)= \CH_\ep'(u(t)) du ^c (t)}
&&\\
&& +
\int_Z \CH_\ep' (u(t^-))\lk[ u(t^-)(-iG(z))\rk]\tilde  \eta(dz,dt)
\\
&&{}+ \int_Z\left[ \CH_\ep (u(t^-)(1-iG(z)))-\CH_\ep (u(t^-))- \CH_\ep '(u(t))\,[(-i) u(t ^-)\,  G(z)]\right]\eta(dz,dt),
\EEQS
%
 we get
\DEQSZ\label{hierstart}
\lqq{ \CH_\ep(u^\ep(t))= \CH_\ep(u^\ep(0))-\int_0^t \la \CH_\ep'(u(s)), i  \lk(\Delta u(s)-\lambda |u(s)|^{\alpha-1}u(s)\rk)\ra ds}
&&
\\\nonumber
&&{}
-\int_0^t\int_Z \la \CH_\ep'(u(s)),i u(s)H(z)\ra \nu(dz)\, ds
\\\nonumber
&&{}
+\int_0^t \int_Z\lk[ \CH_\ep(u(s-)(1-iG(z))) -\CH_\ep(u(s-))\rk]\tilde \eta(dz, ds)
\\\nonumber
&&{}
+\int_0^t \int_Z \lk[ \CH_\ep(u(s-)(1-iG(z))) -\CH_\ep(u(s-))\rk.
\\\nonumber
&& {}\phantom{\Bigg|} \quad \lk.- \CH_\ep'(u(s-))( u(s-)(-iG(z)))\rk]\gamma(dz, ds)
.
\EEQSZ
In order to analyse the first term, we first use  the fact that $\la J_\ep^\frac12 \CH'(u),iJ_\ep^\frac12 \CH'(u)\ra =0$
to write
\begin{eqnarray*}
\CH'_\ep(u)\cdot [i(\Delta u-\lambda |u|^{\alpha-1}u)] & = & -\la\CH'(J_\ep^\frac12 u),iJ_\ep^\frac12 \CH'(u)\ra\\
& = &-\la \CH'(J_\ep^\frac12 u)-J_\ep^\frac12 \CH'(u),iJ_\ep^\frac12 \CH'(u)\ra.
\end{eqnarray*}
Note that all the terms are well defined, since $u\in \DD([0,T];H^ 1 _2(\RR^d))$ implies $\CH'(u)\in \DD([0,T]; H^{-1}_2(\RR^d))$
and $J_\ep^\frac12 \CH'(u)$ is a.s. in $L^{\infty}(0,T;L^2(\RR^d))$, while $J_\ep^\frac12 u \in L^{\infty}(0,T;H^2_2(\RR^d)$ so that
$\CH'(J_\ep^\frac12 u)\in L^{\infty}(0,T;L^2(\RR^d))$.

Next, we prove that $\int_0^t\la \CH'(J_\ep^\frac12 u)-\Jeh \CH'(u),i\Jeh \CH'(u)\ra \,ds$ tends to zero as $\ep$ tends to zero, for any
$t$, a.s. First, note that
\begin{equation}
\label{comu}
\CH'(\Jeh u)-\Jeh \CH'(u)=\lambda |\Jeh u|^{\alpha-1}\Jeh u - \lambda \Jeh(|u|^{\alpha-1}u).
\end{equation}
Let us remind that the solution belongs $\PP$-a.s.\ to $\DD(0,T;H^ 1 _2)\cap L^ q (0,T;H_ p^ 1(\RR^d))$.
Then, using H\"older inequalities and Sobolev embeddings, it is easily seen that the above term is bounded independently
of $\ep$ in $H_{\alpha+1/\alpha}^1(\RR^d)\subset L^2(\RR^d)$; indeed, one may e.g. bound
$$
\left| |\Jeh u|^{\alpha-1}\nabla \Jeh u\right|_{L^{\frac{\alpha+1}{\alpha}}} \le C|\Jeh u|_{L^{\alpha+1}}^{\alpha-1} |\nabla \Jeh u|_{L^{\alpha+1}}
\le C |u|_{H^1_2}^{\alpha-1}Ê|\Jeh \nabla u|_{L^{\alpha+1}} \le C |u|_{H^1_2}^{\alpha-1} |u|_{H^1_{\alpha+1}},
$$
since $\Jeh$ is a bounded operator -- with a bound independent of $\ep$ -- in $L^{\alpha+1}(\RR^d)$. All the other
terms are estimated in the same way. Hence, we know by \eqref{comu} that
\begin{equation}
\label{comuep}
| \CH'(\Jeh u)-\Jeh \CH'(u)|_{H^1_{\frac{\alpha+1}{\alpha}}}\le C |u|_{H^1_2}^{\alpha-1}|u|_{H^1_{\alpha+1}}.
\end{equation}
With the same arguments,
\begin{equation}
\label{nonl}
\left| |u|^{\alpha-1}u\right|_{H^1_{\frac{\alpha+1}{\alpha}}}\le C |u|_{H^1_2}^{\alpha-1}|u|_{H^1_{\alpha+1}}.
\end{equation}
Let us now decompose
\begin{eqnarray*}
& & \la \CH'(\Jeh u)-\Jeh \CH'(u), i\Jeh \CH'(u)\ra \\
&=& -\la \CH'(\Jeh u)-\Jeh \CH'(u), i\Jeh \Delta u\ra +\lambda \la \CH'(\Jeh u)-\Jeh \CH'(u), i\Jeh (|u|^{\alpha-1}u)\ra
\end{eqnarray*}
and integrate the first term by parts. We then have to consider
\begin{eqnarray*}
I_\ep&=& \int_0^t \la \nabla(\CH'(\Jeh u)-\Jeh \CH'(u)),i\Jeh \nabla u\ra\, ds\\
& =& \int_0^t \la \Jeh \nabla (\CH'(\Jeh u)-\Jeh \CH'(u)), i\nabla u\ra \, ds.
\end{eqnarray*}
Using \eqref{comuep}, $\Jeh \nabla (\CH'(\Jeh u)-\Jeh \CH'(u))$ is bounded in $L^2(0,T;L^{\frac{\alpha+1}{\alpha}}(\RR^d))$,
independently of $\ep$ by
$$C|u|_{L^{\infty}(0,T;H^1_2)}^{\alpha-1} |u|_{L^2(0,T;H^1_{\alpha+1})}\le C |u|_{L^{\infty}(0,T;H^1_2)}^{\alpha-1}
|u|_{L^q(0,T; H^1_{\alpha+1})}.$$
Hence, $\Jeh \nabla (\CH'(\Jeh u)-\Jeh \CH'(u))$ converges weakly to $0$ in $L^2(0,T;L^{\alpha+1/\alpha}(\RR^d))$.
Since $\nabla u\in L^2(0,T;L^{\alpha+1}(\RR^d))$, it follows that $I_\ep$ converges to $0$ as $\ep \to 0$.
For the second term, we use the same argument and \eqref{nonl} : by \eqref{comuep} and the embedding $H^1_{\alpha+1/\alpha}(\RR^d)
\subset L^2(\RR^d)$, the term $\Jeh (\CH'(\Jeh u)-\Jeh \CH'(u))$ is bounded uniformly in $\ep$, in $L^2(0,T;L^2(\RR^d))$,
by $C|u|_{L^{\infty}(0,T;H^1_2)}^{\alpha-1} |u|_{L^2(0,T;H^1_{\alpha+1})}$, and $|u|^{\alpha-1}u \in L^2(0,T;H^1_{\alpha+1/\alpha}(\RR^d))
\subset L^2(0,T;L^2(\RR^d))$ by \eqref{nonl}, so that again,
$$
 \int_0^t \la \CH'(\Jeh u)-\Jeh \CH'(u),i\Jeh (|u|^{\alpha-1} u)\ra\, ds
= \int_0^t \la \Jeh (\CH'(\Jeh u)-\Jeh \CH'(u)), i|u|^{\alpha-1}u \ra \, ds
$$
converges to $0$ as $\ep \to 0$.

Going back to \eqref{hierstart}, the first term which does not vanish for $\ep\to 0$ is
\DEQS
\int_0^t\int_Z \la \CH_\ep'(u(s)),i u(s)H(z)\ra \nu(dz) \,ds.
\EEQS
However, taking $\ep\to 0$ we see by similar arguments as before that
\DEQS
\int_0^t\int_Z \la \CH_\ep'(u(s)),i u(s)H(z)\ra \nu(dz)\,ds\to \int_0^t\int_Z \la \CH'(u(s)),i u(s)H(z)\ra \nu(dz)\,ds.
\EEQS

\del{\\\nonumber
&&{}
+\int_0^t \int_Z\lk[ \CH_\ep(u(s-)(1-G(z))) -\CH_\ep(u(s-))\rk]\tilde \eta(dz, ds)
\\\nonumber
&&{}
+\int_0^t \int_Z \lk[ \CH_\ep(u(s-)(1-G(z))) -\CH_\ep(u(s-))\rk.
\\\nonumber
&& {}\phantom{\Bigg|} \quad \lk.-\la \CH_\ep'(u(s-))( u(s-)(-iH(z)))\rk]\nu(dz, ds)
.}
\noindent
Straightforward calculations give
\DEQS
\lqq{ \int_0^t \, \CH'(u(t)).(- i\int_Z u (t,x)\, \CCC (z)  \,\nu(dz))\,dt}
\\ &
=&-\int_0^t  \langle \Delta u(t), - i\int_Z u (t)\, \CCC(z)  \,\nu(dz)\rangle\,dt
\\
&&{}+\lambda \int_0^t \langle |u(t)|^{\alpha -1} u(t)- i\int_Z u (t,x)\, \hh (z(x))
\,\nu(dz)\rangle\,dt.
\EEQS
Applying integration by parts we get for the first summand 
\DEQSZ\label{herecompare}
\lqq{-
\int_Z\la \Delta u (t), -iu (t)\CCC(z)\ra\,\nu(dz)}
\\\nonumber
& =&
\int_Z\int_{\RR^d}\Re( \nabla u(t,x) \nabla( \overline{ -i  u(t,x) \hh (z(x))   } ) \, dx \,\nu(dz)
\\\nonumber&=&-
\int_Z\int_{\RR^d}\Im( \nabla u(t,x) \nabla(  \overline{   u(t,x) \hh (z(x))   } ) \, dx \,\nu(dz)
\\
\nonumber  &=&
  \int_Z\int_{\RR^d}|\nabla u(t,x)| ^2 \Im(  {  \hh (z(x))} )\, dx \,\nu(dz)
\\ \nonumber && \hspace{2cm} -
 \int_Z\int_{\RR^d}\Im( \nabla  u(t,x) \overline{ u(t,x)} \nabla \overline{  \hh (z(x))} )\,dx \,\nu(dz)
 \EEQSZ
Next,
\DEQSZ\label{herecomparealpha}
\lqq{\lambda
\int_Z\la | u| ^{\alpha-1} u ,-iu H(z)\ra\,\nu(dz)}
&&
\\ \nonumber & =&\lambda
\int_Z\int_{\RR^d}\Re\lk( | u(t,x) | ^{\alpha-1} u(t,x)  (\overline{ -i  u(t,x) \hh (z(x)) )  } \rk) \, dx \,\nu(dz)
\\\nonumber
&=&
- \lambda \int_Z\int_{\RR^d}\Im\lk(  | u(t,x) | ^{\alpha-1} u(t,x)  \overline{   u(t,x)} \overline{\hh (z(x))   } \rk) \, dx \,\nu(dz)
\\\nonumber
&=& \lambda
 \int_Z\int_{\RR^d}| u(t,x) | ^{\alpha+1}\Im\lk(  \hh (z(x)) \rk) \, dx \,\nu(dz).
\EEQSZ
In the next lines we calculate the terms arising due to the jumps, that is  the terms
\DEQSZ\label{ft}
\int_Z \CH_\ep' (u(t^-))\lk[ u(t^-)(-iG(z))\rk]\tilde  \eta(dz,dt)
\EEQSZ
and
\DEQSZ\label{ST}
\int_Z\left[ \CH_\ep (u(t^-)(1-iG(z)))-\CH_\ep (u(t^-))- \CH_\ep '(u(t))\,[-iu(t ^-)\,  G(z)]\right]\eta(dz,dt).
\EEQSZ
Here, again one has to take the limit. Since, the arguments are similar as before we omit them.
Applying the  Burkholder inequality we get
\DEQS
\lqq{
\EE\sup_{0\le s\le t} \lk| \int_Z\CH' (u(s^-))\lk[ u(s^-)(-iG(z))\rk]\tilde  \eta(dz,ds)\rk|  }
\\
&\le &
\EE \lk( \int_0 ^t\int_Z  \lk| \CH' (u(s^-))\lk[ u(s^-)(-iG(z))\rk]\rk| ^2\nu(dz) \, ds\rk) ^\frac 12
\\
&\le&2
\EE\lk( \int_0 ^t\int_Z \lk| \la \Delta u (s), (-iu (s)G(z))\ra \rk| ^2\nu(dz) \, ds\rk.
\\
&&\lk.
{}+ \int_0 ^t \int_Z\lk|  \la | u(s ^-)|^ {\alpha-1}  u(s ^-), u(s ^-) (-iG(z)) \ra\rk|^ 2 \nu(dz) \, ds
\rk) ^\frac 12
.
\EEQS
In order to  calculate the inner part of the first term we compare it to \eqref{herecompare} and get
\DEQS
\lqq{\EE\lk(\int_0 ^t \int_Z \lk| \la \Delta u (s), -iu (s)G(z)\ra \rk| ^2\nu(dz) \, ds\rk)^ \frac 12 }
\\
&\le &
C\EE\lk( \int_0 ^t \int_Z\lk( \int_{\RR^d}|\nabla u(s,x)| ^2 |\Im(  {  g(z(x))} )| \, dx\rk) ^ 2 \,\nu(dz)\,ds\rk.
\\ \nonumber && \hspace{2cm} +\lk.
\int_0 ^t \int_Z\lk( \int_{\RR^d}|\Im( \nabla  u(s,x) \overline{ u(s,x)} \nabla \overline{  g(z(x))} )| \,dx\rk) ^ 2  \,\nu(dz)\rk) ^ \frac 12
.
\EEQS
In order to  calculate the inner part of the second term we compare it to \eqref{herecomparealpha} and get
\DEQS
\lqq{\EE\lk(\int_0 ^t \int_Z\lk|  \la | u(s ^-)|^ {\alpha-1}  u(s ^-), u(s ^-) (-iG(z)) \ra\rk|^ 2 \nu(dz) \, ds\rk)^ \frac 12 }
&&
\\
&\le &
\EE\lk(\int_0 ^t  \int_Z\lk( \int_{\RR^d}| u(s,x) | ^{\alpha+1}|\Im\lk(   {  g(z(x))  } \rk)|  \, dx\rk) ^ 2 \,\nu(dz)\, ds\rk)^ \frac 12
.
\EEQS

Now we are going to calculate the term \eqref{ST}.
Here, taking into account that
$$\int_Z\left|\CH (v(1-iG(z)))-\CH (v)- \CH'(v)\,[-i v\,  G(z)]\right| \, \nu(dz)<\infty
$$
for all $v\in H^ 1 _2(\RR^d)$,
taking the expectation leads on both sides, to 
\DEQS
\lqq{ \EE \int_0^ t \int_Z\left[ \CH (u(s^-)(1-iG(z)))-\CH (u(s^-))- \CH '(u(s^-))\,[-i u(s ^-)\,  G(z)]\right]\eta(dz,ds)
}&&
\\ &\le &\EE \int_0^ t \int_Z\left| \CH(u(s^-)(1-iG(z)))-\CH (u(s^-))- \CH '(u(s^-))\,[-i u(s ^-)\,  G(z)]\right|\,\nu(dz)\, ds
.
\EEQS
First we will calculate the terms of $\CH$ involving $ \int_{\RR ^d}|\nabla u(x)| ^2 \, dx$.
Here, we will  use the identity  $|a| ^2 -|b| ^2 =\Re((a-b)\overline{(a+b)} )$, and taking expectation gives

\DEQS
\lqq{
\frac 12 \EE\int_0^ t \int_Z\lk[ \lk| \nabla\lk( u(s^-) (1-iG(z)) \rk)  \rk| ^2\rk.}
&&\\
&&{}\lk. -\lk|\nabla\lk( u(s^-) \rk)\rk| ^2  -2 \la \nabla u(s^-) ,\nabla ( u(s^-)(-i G(z) ))\ra
\rk]\,
\nu(dz)\,ds
\\
&=&
\frac 12 \EE\int_0^ t \int_Z \int_{\RR^ d }\lk[\Re\lk\{\lk( \nabla( u(s^-,x)(-ig(z(x)))) \rk)\overline{ \lk( \nabla( u(s^-,x)(2-ig(z(x))) \rk)} \rk\}
\rk.
\\
&&{} -
\lk.
2     \Re \lk\{ \nabla u(s^-,x) \cdot \overline{ \nabla( u(s^-,x) (-ig(z(x)))}\rk\}\,
\rk]\, dx\,
\nu(dz)\, ds
\\
&=&
\frac 12 \EE\int_0^ t \int_Z \int_{\RR^ d }\lk[\lk|\nabla \lk(-i u(s^-,x) g(z(x))) \rk) \rk|^2
+2\Re \lk(  \overline{ \nabla u(s^-,x)}\nabla (-iu(s^-,x)g(z(x)))  \rk) \rk\}
\\
&&{} -
\lk.
2     \Re \lk\{ \nabla u(s^-,x) \cdot \overline{ \nabla( u(s^-,x) (-ig(z(x))))}\rk\}\,
\rk]\, dx\,
\nu(dz)\, ds
\\
&=&
\frac 12 \EE\int_0^ t \int_Z \int_{\RR^ d }\lk|\nabla \lk(-i u(s^-,x) g(z(x)) \rk) \rk|^2
\,dx\,
\nu(dz)\, ds.
\EEQS
It remains to calculate the second part of \eqref{ST}, i.e.\
\DEQS
\lqq{ \frac{\lambda}{\alpha +1} \EE  \int_0^ t \int_Z  \int _{\RR ^d } \lk[ |u(s^-,x)(1-ig(x))| ^{\alpha +1} \phantom{\Big|} \rk.
}
&&
\\&& {}\lk.
-|u(s^-,x)| ^{\alpha +1}-(\alpha+1) |u(s^-,x)|^ {\alpha-1}\Re\lk( u(s^-,x) \overline { u(s^-,x)(-i g(z(x))}\rk)\rk] dx\,\nu(dz)\, ds.
\EEQS
{The Taylor formula yields}
\del{or, since $\Re i a=-\Im a$,
\DEQS
\lqq{\frac{1}{\alpha +1} \EE \int_0^ t \int_Z  \int _{\RR ^d }  \lk[ |u(s^-,x)(1-ig(x))| ^{\alpha +1} \phantom{\Big|}\rk.
}
&&
\\ &&{}\lk.
-|u(s^-,x)| ^{\alpha +1}-(\alpha+1) |u(s^-,x)|^ {\alpha+1}\Im\lk( \overline{  g(z(x))}\rk)\rk]  dx\,\nu(dz)\, ds.
\EEQS
This term can be estimated by an application of the Taylor formula. In particular, we get
}
\DEQS
\ldots &\le &
C  \int_0^ t \EE \int_Z  \int _{\RR ^d } |u(s^-,x)| ^{\alpha +1} |g(z(x))|^ 2 dx\,\nu(dz)\, ds
\EEQS
Collecting altogether, taking into account the Hypothesis \ref{hypogeneral}, and rearranging the terms we see that
the bound \eqref{toshowh} below is satisfied ; indeed, first, observe, since no stochastic integral is involved in the bounds,
we can change from $s^-$ to $s$, and we have

\DEQS
\nonumber
\lqq{\EE \sup_{0\le s\le t} \lk[ \CH(u(s))-\CH(u(0))\rk]}
\\
 &\le&\phantom{\Bigg|}
  C\, \EE \int_0^ t\int_Z\int_{\RR^d}\lk|\nabla u(s,x)\rk| ^2 |\Im(  {  \hh (z(x))} )|\, dx \,\nu(dz)\, ds
\phantom{\Bigg|}
 +C\, \EE \int_0^ t\int_Z\int_{\RR^d}\lk\{ {\lk| \Im( \nabla  u(s,x) \overline{ u(s,x)} \nabla \overline{  \hh (z(x))} )\rk|} \rk.
\phantom{\Bigg|}
\\
&&{} \lk.
+ |u(s,x)|^{\alpha+1} |\Im(h(z(x)))| + \lk| \nabla \lk(-iu(s,x)g(z(x))\rk)\rk|^2 + |u(s,x)|^{\alpha+1}|g(z(x))|^2 \rk\}
\,dx\,
\nu(dz)\, ds
\phantom{\Bigg|}
\\
&&{}+
\EE\lk( \int_0^ t \int_Z\lk( \int_{\RR^d}|\nabla u(s,x)| ^2 |\Im(  {  g (z(x))} )| \, dx\rk) ^ 2  \,\nu(dz)\, ds\rk)^ \frac 12
\\ \nonumber && +
\EE\lk( \int_0^ t  \int_Z\lk( \int_{\RR^d}|\Im( \nabla  u(s,x) \overline{ u(s,x)} \nabla \overline{  g (z(x))} )| \,dx\rk) ^ 2  \,\nu(dz)\, ds\rk) ^ \frac 12
\\
&&{}+
\EE\lk(  \int_Z\lk( \int_{\RR^d}\lk| u(s,x) \rk| ^{\alpha+1}\lk|  g (z(x) ) \rk|  \, dx \rk) ^ 2 \,\nu(dz)\, ds\rk)^ \frac 12
.
\EEQS
 {Next,
carefully applying the H\"older inequality and, if necessary, the Young inequality term by term, and using
Hypothesis \ref{hypogeneral}-(i)
one finally arrives at}
\DEQSZ
\label{toshowh}
\lqq{
\EE \sup_{0\le s\le t} \lk[ \CH(u(s))-\CH(u(0))\rk]
} \nonumber
&&\\
&\le& C\lk( 1 + \EE \lk( \int_0 ^t  \CH(u(s)) ^2 \, ds \rk) ^\frac 12 +\EE  \int_0 ^t  \CH(u(s))\, ds \rk) ,
\EEQSZ
where the constant $C>0$ depends only on $C_0(\nu)$, $C_1(\nu)$, $C_2(\nu)$, and $\alpha$.
%
%
We deduce

\DEQS
\lqq{
\EE \sup_{0\le s\le t} \lk[ \CH  (u(s))-\CH  (u(0))\rk]
}
&&\\
&\le&  C + C \sqrt{t}\, \EE  \sup_{0\le s\le t} \CH  (u(s)) +C \EE  \int_0 ^t  \CH  (u(s))\, ds
\\
&\le&  C + C\lk(  \sqrt{t}  + t\rk)\, \EE  \sup_{0\le s\le t} \CH  (s)  .
\EEQS
Now, let $t^ \ast$ be so small that  $C(\sqrt{t^\ast}+t^\ast)\le \frac 12 $. Then, we get
$$
\EE \sup_{0\le s\le t^ \ast}  \lk[ \CH  (u(s))-\CH  (u(0))\rk]\le 2 C,
$$
and therefore
$$
\EE \sup_{0\le s\le t^ \ast}   \CH  (u(s))\le \EE \CH  (u(0))+ 2C.
$$
Noting that  $t^\ast$ is  only depending on $C_0(\nu)$, $C_1(\nu)$, $C_2(\nu)$,  and $\alpha$, one may iterate
the previous step on $[t^\ast, 2t^\ast]$, etc, and show that \eqref{energybound} holds.


%

%
\medskip

Next, we want to investigate the entity $\int_{\RR^d } x^2 |u(t,x)|^2 \, dx$.
First, we will  prove  the inequality
$$
\EE \int_{\RR^d } x^2 |u(t,x)|^2 \, dx \le C(T)\left[1+  \EE(\CH(u_0)) + \EE  \int_{\RR^d } x^2 |u(0,x)|^2 \, dx \right] ,
$$
where the constant $C>0$ depends only on $C_0(\nu)$, $C_1(\nu)$, $C_2(\nu)$, and $\alpha$.
Secondly, we will give an estimate of $\EE\sup_{0\le s\le T} \int_{\RR^d } x^2 |u(s,x)|^2 \, dx$.
In particular, we have by the It\^o formula
\DEQS
\lqq{ d \int_{\RR^d} |x|^ 2 |u (t,x)|^2 \, dx
}
&&
\\  &=&
 \,2\langle x^2 u(t),du^ c(t)\rangle
 \\&&+\int_Z \lk[x^2|u(t^-)-iu(t^-)G(z)|^2-x^2|u(t^-)|^2\rk] \, \tilde \eta(dz, dt)
 \\&&+\int_Z \lk[x^2|u(t^-)-iu(t^-)G(z)|^2-x^2|u(t^-)|^2- 2\la x^ 2 u(t^ -) ,u(t^ -) (-iG(z))\ra\rk] \,  \nu(dz) \,dt
\\
&=& 2\langle x^2 u, -i\Delta u+i\lambda |u|^{\alpha-1}u\rangle\, dt -2\int_Z \langle x^2 u, i u \CCC(z)\,\rangle\nu(dz)\, dt \\
&&  + \int_Z \lk(\int_{\RR^d} x^2|u(t^-,x)|^2 \lk[ |1-ig(z(x))|^2-1\rk] \,dx\rk) \,\tilde\eta(dz,dt)
 \\&&+\int_Z \int_{\RR^ d } x^2\Big\{ |u(t^-,x)[1-ig(z(x))]|^2-|u(t^-,x)|^2
 \\
 &&{}- 2\Re\lk[  x^2u(t^ -,x) \overline{ u(t^ -,x) (-ig(z(x)))} \rk] \Big\}\, \nu(dz) \,dt
.
\EEQS
Integrating by parts,
\DEQS
\lqq{-\langle x^2u, i\Delta u\rangle =\langle \nabla (x^2 u), i\nabla u\rangle}&&
\\
&=&2 \langle xu, i\nabla u\rangle =-2 \Im \int_{\RR^d} \bar u(x) x \cdot \nabla u(x) \,dx.
\EEQS
Next,
\DEQS
 \langle x^2 u, i u \CCC(z)\,\rangle
 &=&
 \int_{\RR^ d } \Re\lk( x^2 u(t,x)\overline{u(t,x) i \hh (z(x)))}\rk)\,dx\,
\\
 &=&-
 \int_{\RR^ d } |x|^ 2 |u(t,x)|^ 2 \Im\lk(   \hh (z(x)))\rk)\,dx.
\EEQS
Since, for any complex valued functions $a$ and $b$ we have $|a| ^2 -|b| ^2 =\Re\la a-b,a+b\ra $, we have
\DEQS
\lqq{\int_{\RR^ d } x^2\lk[|u(t^-,x)[1-ig(z(x))]|^2-|u(t^-,x)|^2- 2 \Re \lk(u(t^-,x)\overline{u(t^-,x)(-ig(z(x)))} \rk) \rk]  \, dx
}&&
\\
&=&\int_{\RR^ d } x^2\lk[ \Re\lk(u(t^-,x)( -ig(z(x))\overline {u(t^-,x)(2-ig(z(x)))}\rk) - 2 |u(t^-,x)|^ 2 \Re \lk(\overline{-ig(z(x))}\rk)  \rk]  \, dx
\\
&=&  \int_{\RR^ d } x^2|u(t^ -,x)|^ 2 \lk(2\Im \lk(g(z(x))\rk)  - 2  \Re \lk(i\overline{g(z(x))}\rk) + |g(z(x))|^ 2 \rk)\, \, dx
\\
&=&  \int_{\RR^ d } x^2|u(t^ -,x)|^ 2  |g(z(x))|^ 2  \, dx.
\EEQS
Collecting altogether, taking expectation we get
\DEQS
\EE  \int_{\RR^ d} x^ 2 |u(s,x)|^ 2 \, dx
&\le & - 4\Im \int_0^ t  \EE\int_{\RR^d } \bar u(s,x) x\cdot \nabla u(s,x) \, dx\, ds
\\
&&{} +  2\int_0^ t  \int_Z \EE \int_{\RR^ d } |x|^ 2  |u(s,x)|^ 2 \Im\lk(   \hh (z(x)))\rk)\,dx\, \nu(dz)\, ds
\\
&&{} +  \int_0^ t  \int_Z \EE \int_{\RR^ d } |x|^ 2  |u(s,x)|^ 2   |g(z(x))|^ 2 \,dx\, \nu(dz)\, ds
.
\EEQS
Taking into account Hypothesis \ref{hypogeneral}-(ii)-(b), we get by the Young inequality 
\DEQS
\lqq{ \EE \int_{\RR^ d} x^ 2 |u(s,x)|^ 2 \, dx  }
\\
&\le&C(\nu) \, \EE\int_0^ t  \int_{\RR^d } x^ 2  |{u(s,x)}|^ 2 \, dx \, ds+ 2 \EE \int_0^ t \int_{\RR^d } |\nabla  u(s,x)|^ 2  \, dx\, ds.
\EEQS
Since the second term is bounded by $\EE \CH(u(t))$, the assertion follows by the Grownwall inequality.
Next, observe that
\DEQS
\lqq{\int_{\RR^ d } x^2\lk[|u(t^-,x)[1-iG(z)]|^2-|u(t^-,x)|^2\rk]  \, dx
}&&
\\
&=&  \int_{\RR^ d } x^2|u(t^ -,x)|^ 2 \lk(2\Im (g(z(x)) + |g(z(x))|^ 2 \rk)\, \, dx .
\EEQS
Hence, we get in addition, by Burkholder inequality
\DEQS
\lqq{\EE \sup_{0\le s\le t}  \int_{\RR^ d} x^ 2 |u(s,x)|^ 2 \, dx  }
&&
\\
&\le & - 4\Im \int_0^ t  \int_{\RR^d }\EE \bar u(s,x) x\cdot \nabla u(s,x) \, dx\, ds
\\
&&{} +  \int_0^ t \int_Z \int_{\RR^ d } |x|^ 2 \EE |u(s,x)|^ 2 \Im\lk(   \hh (z(x)))\rk)\,dx\,\nu(dz)\, ds
\\
&&{}+\EE \sup_{0\le s\le t} \int_0^ t \int_Z  \int_{\RR^ d }x^2\lk[|u(t^-,x)[1-G(z)]|^2-|u(t^-,x)|^2\rk] \, dx \,\eta(dz,ds)
\\
&\le & -  4\Im \int_0^ t  \int_{\RR^d }\EE \bar u(s,x) x\cdot \nabla u(s,x) \, dx\, ds
\\
&&{} +  \int_0^ t \int_Z \int_{\RR^ d } |x|^ 2 \EE |u(s,x)|^ 2 \Im\lk(   \hh (z(x)))\rk)\,dx\,\nu(dz)\, ds
\\
&&{}+\int_0^ t \int_Z  \int_{\RR^ d } x^2 \EE |u(s,x)|^ 2  \lk(2\Im (g(z(x)) + |g(z(x))|^ 2 \rk)\, \, dx\,\nu(dz)\, ds.
\EEQS
By similar arguments as before, we can show that
\DEQS
\EE \sup_{0\le s\le t}  \int_{\RR^ d} x^ 2 |u(s,x)|^ 2 \, dx &\le & C(T)\lk( \EE \int_{\RR^ d} x^ 2 |u(0,x)|^ 2 \, dx +1+  \EE |\nabla u(t)|^2\, dx \rk) .
\EEQS

\medskip

\end{proof}

\section{Existence  of the solution with infinite \levy measure - Proof of Theorem \ref{main}}\label{reg}
\label{main_proof}

The proof is done in several steps. In the first step we  construct a solution by cutting of the small jumps.
In this way, we get a sequence of  solutions, denoted in the following by $\{u_m:m\in\NN\}$.
Next, in the second step, we give uniform bounds on the mass $\CE(u_m)$, the Hamiltonian $\CH(u_m)$
and the virial $|xu_m|_{L^2}^2$.
Thanks to these uniform bounds we are able to prove in the third step tightness of the laws of
$\{u_m:m\in\NN\}$ in $\DD(0,T;H^\gamma_2(\RR^d ))$ for any $\gamma<1$.
Now, the existence of a converging  subsequence follows.
In order, to get again stochastic processes   we apply  the Skorohod embedding Theorem.
 This gives us a probability space with
a family of processes converging in the almost sure sense.
Now in the last step we can show by an application of the dominated convergence Theorem that
this limit is indeed a solution to \eqref{itoeqn}.

\medskip

\paragraph{\bf Step I}{In the first step we will construct an approximating sequence.}
Let $\{ \ep_m:m\in\NN\}$ be a sequence such that $\ep_m>0$ and $\ep_m \downarrow 0$.
Let  $\nu_m$ be the \levy measure defined by $\nu_m(U):= \nu( U \setminus B_Z(\ep_m))$, $U\in\CB(Z)$.
Let $\eta$ be a time homogenous Poisson random measure over a filtered probability  space $\mathfrak{A}$ and let $\eta_m$ be the time homogenous random measure
given by
\DEQSZ\label{defprmapp}
\eta_m(U\times I) := \eta( U\setminus B_Z(\ep_m)\times I),\quad U\in\CB(Z),\,I\in \CB([0,T]).
\EEQSZ
Let us observe that $\eta_m$ has intensity measure $\nu_m$.
We denote by $u_m$ the solution to
\DEQSZ
\label{oben_m} 
u_m(t) &= &\CT(t) u_0+ i\lambda\int_0^ t \CT(t-s)
 \lk|u_m(s)\rk|^{\alpha-1} \,
 u_m(s)\, ds
\\\nonumber
&&{}-i \int_0^t\int_Z  \CT(t-s) u_m(s)\, G(z) \, \tilde \eta_m(dz,ds)
\\ \nonumber
&&{}-i\int_0^ t \int_{Z} \CT(t-s)u_m(s) \, \CCC(z)\, \nu_m(dz)\, ds.
\EEQSZ
It follows from  Lemma  \ref{local_ex} that for any $m$ there exists a unique 
solution with $u_m$  to Equation \eqref{oben_m} belonging $\PP$-a.s.\ to $\DD(0,T;H^1(\RR^d ) )$.

\medskip

\paragraph{\bf Step II}
We will prove the following Claim.
\begin{claim}\label{first_uniform}
\begin{itemize}

  \item For any $T>0$ there exists a constant $C=C(T,C_0(\nu),C_3(\nu),C_g,C_h)>0$ such that
  $$\EE \sup_{0\le t\le T} |u_m(t)|_{L ^ 2}= C\,\lk( |u_0|_{L ^ 2}+1\rk) ,\quad m\in\NN,\, \PP-\mbox{a.s.}
$$
  \item For any $T>0$  there exists a $C=C(T,C_0(\nu),C_1(\nu),C_2(\nu),C_g,C_h)>0$ such that
  $$
\EE\sup_{0\le s\le T}  \CH(u_m(s))  =C\,\lk( \EE \CH(u(0))+1\rk) ,\quad m\in\NN.
$$
  \item For any $T>0$ there exists a $C=C(T,C_0(\nu),C_1(\nu),C_2(\nu),C_g,C_h)>0$ such that
  $$
\EE\sup_{0\le s\le T} \int_{\RR^ d} |x|^ 2 |u(s,x)|^ 2 \, dx   = C\,\lk(  \EE\int_{\RR^ d} |x|^ 2 |u(0,x)|^ 2 \, dx +\CH(u(0))+1\rk),\quad m\in\NN.
$$

\end{itemize}
\end{claim}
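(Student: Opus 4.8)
The plan is to deduce all three bounds directly from Technical Lemma \ref{local_ex}, the only genuine work being to verify that the constants produced there can be chosen independently of $m$. For each fixed $m$ the cut-off measure $\nu_m(U)=\nu(U\setminus B_Z(\ep_m))$ is finite, since $\nu(Z\setminus B_Z(\ep_m))<\infty$ by assumption, and it still satisfies Hypothesis \ref{hypogeneral}: restricting a measure cannot destroy the integrability conditions (ii)(a)--(d) nor the hypotheses on $g,\hh$. Hence $u_m$, the solution of \eqref{oben_m} driven by $\eta_m$, falls exactly under the hypotheses of Lemma \ref{local_ex}, which already yields the mass estimate, the Hamiltonian estimate \eqref{energybound}, and the virial estimate, each with a constant of the form $C(T,C_j(\nu_m),C_g,C_h)$.

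The key observation for uniformity is the domination $\nu_m\le\nu$ as measures: for every $U\in\CB(Z)$ one has $\nu_m(U)=\nu(U\setminus B_Z(\ep_m))\le\nu(U)$. Since each of $C_0(\nu),C_1(\nu),C_2(\nu),C_3(\nu)$ is the integral of a nonnegative integrand against the Lévy measure, monotonicity of the integral gives $C_j(\nu_m)\le C_j(\nu)$ for $j=0,1,2,3$ and every $m$. Because the constants appearing in Lemma \ref{local_ex} are nondecreasing functions of $T$, $C_g$, $C_h$ and these $C_j$'s alone, replacing $\nu_m$ by $\nu$ in the argument of $C$ can only enlarge them; thus each bound holds with a constant $C(T,C_j(\nu),C_g,C_h)$ that no longer depends on $m$.

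The one point that must be treated carefully is the short-time iteration used inside the proof of Lemma \ref{local_ex}: there one chooses a small step $t^\ast$ with, e.g., $C\sqrt{t^\ast}\le\tfrac12$, and then covers $[0,T]$ by $\lceil T/t^\ast\rceil$ such steps. I would emphasize that the constant $C$ entering this smallness condition depends only on $C_0(\nu),C_1(\nu),C_2(\nu),\alpha$, so by the previous paragraph $t^\ast$ can be fixed once and for all, uniformly in $m$; consequently the number of iterations and the resulting final constant are uniform in $m$ as well. With this in hand the three displayed estimates follow verbatim from Lemma \ref{local_ex}, now with $m$-independent constants, and the initial quantities $|u_0|_{L^2}^2$, $\CH(u_0)$ and $\int_{\RR^d}x^2|u_0(x)|^2\,dx$ are finite by the assumptions of Theorem \ref{main}; this proves Claim \ref{first_uniform}.

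I expect no substantial obstacle here: the content is essentially the bookkeeping of uniformity in $m$ through the domination $\nu_m\le\nu$, together with the remark that the local existence time $t^\ast$ in Lemma \ref{local_ex} depends on $\nu$ only monotonically through the $C_j(\nu)$. The only conceivable pitfall would be a hidden dependence of some constant on $\rho_m=\nu_m(Z)$, which blows up as $m\to\infty$; but inspection of the a priori estimates shows that they involve $\nu$ exclusively through $C_0,\dots,C_3$, so no such dependence arises.
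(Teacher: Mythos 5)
Your proposal is correct and follows essentially the same route as the paper, whose entire proof consists of the observation that Claim \ref{first_uniform} is an application of Technical Lemma \ref{local_ex} together with the monotonicity $C_j(\nu_m)\le C_j(\nu)$ for $j=0,1,2,3$. Your additional remarks — that the small-time step $t^\ast$ in the iteration depends on $\nu$ only through the $C_j$'s and hence is uniform in $m$, and that no constant secretly depends on $\rho_m=\nu_m(Z)$ — make explicit the bookkeeping the paper leaves implicit, but do not change the argument.
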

\begin{proof}
In fact the proof of Proposition \ref{first_uniform} is just an application of Proposition \ref{local_ex} and taking into account, that
$$
C_j(\nu_m) \le C_j(\nu),\quad m\in\NN \mbox{ and } j=0,1,2\, \mbox{and}\, 3.
$$
\end{proof}

\paragraph{\bf Step III} In this Step we show the following Claim.
\begin{claim}\label{cseq}
\begin{enumerate}
  For any $\gamma<1$ the laws of the set $\{ u_m:m\in\NN\}$ are  tight in $\DD(0,T;H^ \gamma_2(\RR^d))$
\end{enumerate}
\end{claim}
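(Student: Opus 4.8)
The plan is to establish tightness in the Skorohod space $\DD(0,T;H^\gamma_2(\RR^d))$ by combining a compact containment argument with an Aldous-type continuity-in-probability condition. Recall the criterion (collected in the Appendix): a family of laws of \cadlag processes with values in a separable Banach space is tight on $\DD(0,T;H^\gamma_2(\RR^d))$ provided (a) for each $t$ the laws of $u_m(t)$ concentrate, up to arbitrarily small mass, on a fixed compact subset of $H^\gamma_2(\RR^d)$, and (b) for every $\ep,\ab>0$ there exist $\delta>0$ and $m_0$ such that for all stopping times $\tau_m\le T$ and all $\theta\in[0,\delta]$,
$$\sup_{m\ge m_0}\PP\lk(\,|u_m((\tau_m+\theta)\wedge T)-u_m(\tau_m)|_{H^\gamma_2}\ge \ab\rk)\le \ep.$$

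First I would prove (a) from the a priori bounds of Claim \ref{first_uniform}. For $R>0$ set
$$\CK_R:=\lk\{ v\in H^1_2(\RR^d): |v|_{H^1_2}^2+\int_{\RR^d}|x|^2|v(x)|^2\,dx\le R\rk\}.$$
By a Rellich-type argument $\CK_R$ is relatively compact in $H^\gamma_2(\RR^d)$ for every $\gamma<1$: the $H^1$ bound controls high frequencies while the weight $|x|^2$ controls mass at spatial infinity, and interpolation between $H^1_2$ and $L^2$ upgrades the resulting $L^2$-compactness to $H^\gamma_2$ for $\gamma<1$. Since Claim \ref{first_uniform} bounds $\EE\sup_{0\le t\le T}\big(|u_m(t)|_{H^1_2}^2+\int x^2|u_m(t,x)|^2\,dx\big)$ uniformly in $m$, Chebyshev's inequality gives $\sup_m\PP(u_m(t)\notin \CK_R\ \text{for some }t)\le C/R$, which yields (a).

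For the Aldous condition (b) I would start from the mild formulation \eqref{oben_m} and estimate the increments of the four terms over $[\tau_m,\tau_m+\theta]$ separately. The deterministic group term $\CT(t)u_0$, the nonlinear drift, and the compensator term are all continuous in $H^\gamma_2$: the crucial ingredient is the elementary Fourier estimate
$$|(\CT(h)-I)\phi|_{H^\gamma_2}\le C\,|h|^{(1-\gamma)/2}\,|\phi|_{H^1_2},\qquad \gamma<1,$$
which follows at once from $|e^{4\pi^2 i|\xi|^2 h}-1|\le \min(2,4\pi^2|\xi|^2|h|)$ upon estimating $\min(2,x)\le 2\,x^{(1-\gamma)/2}$. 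Combined with the uniform $H^1_2$ and energy bounds of Claim \ref{first_uniform}, each of these three terms contributes $O(\theta^{(1-\gamma)/2})$ to the expected increment. The genuinely delicate term is the stochastic integral $M_m(t):=-i\int_0^t\int_Z\CT(t-s)u_m(s)G(z)\,\tilde\eta_m(dz,ds)$, which is \emph{not} a martingale because of the $t$-dependence inside $\CT(t-s)$. To handle it I would pass to the interaction picture, writing $M_m(t)=\CT(t)N_m(t)$ with
$$N_m(t):=-i\int_0^t\int_Z\CT(-s)u_m(s)G(z)\,\tilde\eta_m(dz,ds),$$
a genuine $H^1_2$-valued martingale (since $\CT(-s)$ is an $H^1_2$-isometry and, by Hypothesis \ref{hypogeneral} and the associated Remark, multiplication by $G(z)$ maps $H^1_2$ into itself with a $\nu$-integrable operator norm). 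Splitting
$$M_m(t)-M_m(t')=\CT(t)\lk(N_m(t)-N_m(t')\rk)+\lk(\CT(t)-\CT(t')\rk)N_m(t'),$$
the first summand is controlled, using that $\CT(t)$ is an $H^\gamma_2$-isometry, by the martingale increment, which the Burkholder inequality bounds by $C\,\EE\int_{\tau_m}^{\tau_m+\theta}\int_Z|u_m(s)G(z)|_{H^\gamma_2}^2\,\nu_m(dz)\,ds\le C\theta$; the second summand is controlled by the group-continuity estimate above together with the uniform $H^1_2$ bound on $N_m$. Both contributions are $O(\theta^{(1-\gamma)/2})$ uniformly in $m$, and an application of Chebyshev's inequality gives (b).

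The main obstacle is exactly this last point: since the Schr\"odinger group $\CT$ is an isometry and does not smooth, no compactness can be extracted from the evolution itself, and the non-martingale character of $M_m$ blocks a direct use of Doob--Burkholder. The interaction-picture reformulation $M_m=\CT(t)N_m$, together with the quantitative group-continuity estimate that trades one spatial derivative for a positive power of the time increment, is what makes both the compact containment and the time-regularity estimates go through; everything else is routine once the uniform bounds of Claim \ref{first_uniform} are in hand.
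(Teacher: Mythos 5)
Your overall strategy coincides with the paper's: compact containment from the uniform $H^1_2$ and virial bounds of Claim \ref{first_uniform} plus interpolation (the paper takes $B_0=\{v\in H^1_2:\int|x|^2|v|^2\,dx<\infty\}$, notes $B_0\hookrightarrow L^2$ compactly and interpolates to get $B_0\hookrightarrow H^\gamma_2$ compactly for $\gamma<1$), combined with time-increment estimates built on the group-continuity bound $|(\CT(h)-I)\phi|_{L^2}\le C\sqrt{h}\,|\phi|_{H^1_2}$ and on the factorization $\CT(t+h-s)=\CT(t+h)\CT(-s)$ that turns the stochastic-convolution increment into a true martingale increment (your ``interaction picture''; the paper uses the same device implicitly, and its Corollary \ref{comp-2} moment criterion at deterministic times plays the role of your Aldous condition). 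The substantive difference is organizational, and it is exactly where your sketch develops gaps. The paper never applies the group-continuity estimate to the individual Duhamel terms: it recombines \emph{all} history-dependent pieces into the single term
$$
u_m(t+h)-u_m(t)=\lk[\CT(h)-I\rk]u_m(t)\;+\;\Big(\mbox{integrals over }[t,t+h]\mbox{ only}\Big),
$$
so the only object hit by $\CT(h)-I$ is $u_m(t)$ itself, whose second $H^1_2$ moment Claim \ref{first_uniform} controls. Your term-by-term route instead needs uniform-in-$m$ expectation bounds in $H^1_2$ on the Duhamel terms themselves: for $N_m$ this is fine (It\^o isometry in $H^1_2$ plus $C_1(\nu)<\infty$), but for $(\CT(\theta)-I)\int_0^t\CT(t-s)F(u_m(s))\,ds$ you need an $H^1_2$ bound on the nonlinear convolution, which via Strichartz requires control of $\EE\,|u_m|^{\alpha}_{L^q(0,T;H^1_p)}$-type quantities, i.e.\ $\alpha$-th moments that Claim \ref{first_uniform} does not supply.

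The same moment problem infects your claim that the nonlinear drift contributes $O(\theta^{(1-\gamma)/2})$ ``to the expected increment'' using only Claim \ref{first_uniform}: the Strichartz bound on $\int_t^{t+\theta}\CT(t+\theta-s)F(u_m(s))\,ds$ is of order $\theta^{\,\cdot}\sup_s|u_m(s)|^{\alpha}_{H^1_2}$, and for $\alpha>2$ (allowed, e.g.\ any finite $\alpha$ when $d=1,2$) its first moment cannot be bounded by the available second-moment energy estimates. The paper's fix is to raise the increment to the power $2/\alpha$ \emph{before} taking expectations, so that $\big(\sup_s|u_m(s)|^{\alpha}_{H^1_2}\big)^{2/\alpha}=\sup_s|u_m(s)|^{2}_{H^1_2}$, yielding
$$
\EE\Big(\sup_{0\le s\le h}\lk|u_m(t+s)-u_m(t)\rk|^{2/\alpha}_{L^2}\Big)\le C\,h^{1/\alpha}\lk(1+\EE\lk|u_m(t)\rk|^{2}_{H^1_2}\rk),
$$
which is precisely the form required by Corollary \ref{comp-2} (and would serve equally well, via Chebyshev, in your Aldous condition). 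With these two repairs --- the recombination into $[\CT(h)-I]u_m(t)$ and the exponent $2/\alpha$ --- your argument becomes the paper's.
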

\begin{proof}
In order to show the assertion, we will apply Corollary \ref{comp-2}.
We will first  prove the compact containment condition, in particular, the condition (a) in Corollary \ref{comp-2}.
Let $B_0$ be defined by
$$
\lk\{v\in H ^1_2(\RR^d): \int_{\RR ^d} |x| ^2 |v(x)| ^2 \, dx<\infty\rk\}
$$
equipped with norm
$$ |v|_{B_0}:=|v|_{H ^1_2} + \lk( \int_{\RR ^d} |x| ^2 |v(x)| ^2 \, dx\rk) ^\frac 12
.
$$
Then,  $B_0\hookrightarrow L^ 2 (\RR^d)$ compactly.
Let us denote the complex interpolation space $B_1=(B_0,H_2^1(\RR^d))_\delta$.
Then, the embedding of $B_0$ into $H^1_2(\RR^d)$ is bounded, and the embedding of $B_0$ into $L^2(\RR^d)$ is compact.
By Theorem 3.8.1 \cite[p.\ 56]{bergh} it follows that for any $\delta<1$, $B_0\hookrightarrow H^\delta _2(\RR ^d)$ compactly.
To show condition (a) of Corollary \ref{comp-2}, i.e.\ the compact containment condition,
one can use Claim \ref{first_uniform}. To be more precise, it follows from item one, two and three of Claim \ref{first_uniform} that there exists a constant $C>0$ such that
$$
\EE |u_m(t)|_{B_0}\le C,\quad m\in\NN.
$$
Condition (a) follows by the Chebyschev inequality.

 It remains to show, that the family
$\{u_m:m\in\NN\}$ satisfies the second condition of Corollary \ref{comp-2}, i.e.\ (b).
First, observe that we have
\DEQS \lqq{
u_m(t+h)-u_m(t)= \lk[ \CT(h)-I\rk] \CT(t) u_0 +i \lambda  \int_t^ {t+h} \CT( t+h-s)  F(u_m(s)) \, ds } &&
 \\&&+ i \lambda
 \lk[ \CT(h)-I\rk] \int_0^ {t} \CT( t-s)F(u_m(s))  \, ds
 \\
&& {} -i \int_t^ {t+h}\int_Z  \CT(t+h -s) u_m(s)\, G(z) \tilde \eta_m(dz,ds)
\\
&&{} -i \lk[ \CT(h)-I\rk] \int_0^ {t}\int_Z  \CT(t -s) u_m(s)\, G(z) \tilde \eta_m(dz,ds)
 \\
&& {}-i \int_t^ {t+h}\int_Z  \CT(t+h -s) u_m(s)\, \CCC(z)  \nu_m(dz)\,ds
 \\
&& {}- i \lk[ \CT(h)-I\rk]  \int_0^ {t}\int_Z  \CT(t -s) u_m(s)\, \CCC(z)  \nu_m(dz)\,ds
\EEQS
\DEQS
&=&  \underbrace{\lk[ \CT(h)-I\rk] u_m(t)}_{=:I_0(t,h)}   +  \underbrace{ i \lambda \int_t^ {t+h} \CT( t+h-s)  F(u_m(s)) \, ds}_{=:I_1(t,h)}
\\
&&\quad {}- \underbrace{ i \int_t^ {t+h}\int_Z  \CT(t+h -s) u_m(s)\, G( z) \tilde \eta_m(dz,ds)}_{=:I_2(t,h)}
\\
&& \quad {} - \underbrace{ i \int_t^ {t+h}\int_Z  \CT(t+h -s) u_m(s)\, \CCC(z)  \nu_m(dz)\,ds}_ {=:I_3(t,h)}
.
\EEQS
Secondly, note that
\DEQS
\lqq{\EE\sup_{0\le h\le \delta} | u_m(t+h)-u_m(t)|^r_{H^ \gamma_2}}
&&\\
&\le &\EE\sup_{0\le h\le \delta} | u_m(t+h)-u_m(t)|_{L^2}^ {r(1-\gamma)} | u_m(t+h)-u_m(t)|_{H^1_2}^ {r\gamma}
\\
&\le&\EE\sup_{0\le h\le \delta} | u_m(t+h)-u_m(t)|_{L^2}^  {r(1-\gamma)} \lk(| u_m(t+h)|_{H^1_2}+|u_m(t)|_{H^1_2}\rk)^{r\gamma}
\\
&\le &\EE\sup_{0\le h\le \delta} | u_m(t+h)-u_m(t)|_{L^2} ^ {r(1-\gamma)} \EE\sup_{0\le h\le T} \lk(| u_m(t+h)|_{H^1_2}+|u_m(t)|_{H^1_2}\rk)^ {r\gamma}
\\
&\le &\lk( \EE\sup_{0\le h\le \delta} | u_m(t+h)-u_m(t)|^ r_{L^2}\rk)  ^{1-\gamma} \lk( \EE\sup_{0\le h\le T} \lk(| u_m(t+h)|_{H^1_2}+|u_m(t)|_{H^1_2}\rk)^ r \rk) ^ {\gamma}.
\EEQS

In order to estimate $I_0(t,h)$  we know for any $s\in\RR$ $\lk| \lk[ \CT(h)-\mathcal{I}\rk] u_m(t) \rk|_{H^s_ 2} \le h \lk| u_m(t)\rk|_{H^{s+2} _2}$.
Interpolation gives therefore
$$
\lk| \lk[ \CT(h)-\mathcal{I}\rk] u_m(t)\rk|_{L^ 2} \le
\sqrt{2 h}  \lk| u_m(t)\rk|_{H^ 1 _2}.
$$
Since by Claim \ref{first_uniform}, $\EE\,\CH(u_m(t))$ is uniformly bounded in $m$, there exists a constant $C=C(T)>0$ such that
$$
\EE \sup_{0\le h\le \delta}\lk| \lk[ \CT(h)-\mathcal{I}\rk] u_m\rk|^ 2_{L _2} \le C
\sqrt{ \delta }  .
$$

Let $p=\alpha+1$, $p'=(\alpha+1)/\alpha$, $r=4(1+\alpha)/(\alpha-1)d$ and $r'=4(1+\alpha)/(4(1+\alpha)+d(1-\alpha))$.
Applying the Strichartz estimate, we get for  $I_1(t,h)$
\DEQS
\sup_{0\le h\le \delta} \lk| I_1(t,h) \rk|_{L^ 2} 
&\le & \lambda \sup_{0\le h\le \delta} \lk| \int_t^ {t+h} \CT(t+h-r)  F(u_m (r))\, dr  \rk|_{L^2} 
\\
&\le &C\, \lk(\int_t^ {t+\delta} \lk|  F(u_m (s))\rk| ^{r'}_{L^ {p'} }ds\rk)^{1/r'}
\le C  \lk(\int_t^ {t+\delta} \lk|  u_m (s)\rk| ^{r'\alpha }_{L^ {\alpha+1} } ds\rk)^{1/r'}.
\EEQS
Sobolev embedding gives for $\alpha< (d+2)/(d-2)$
\DEQS
\sup_{0\le h\le \delta} \lk| I_1(t,h) \rk|_{L^ 2} &\le & C \lk( \int_t^ {t+\delta} \lk|  u_m (s)\rk| ^{r'\alpha }_{H^ 1_2} ds\rk)^{1/r'}.
\EEQS
Now, taking  the expectation, the H\"older inequality gives
\DEQS
\EE \sup_{0\le h\le \delta} \lk| I_1(t,h) \rk|_{L^ 2}^ {2\over \alpha} 
&\le & C\, \delta^{2/ r'\alpha} \, \EE \sup_{t\le h\le t+\delta} \lk|  u_m (h)\rk|_{H^ 1 _2 } ^{2}
.
\EEQS
By Hypothesis \eqref{hypogeneral}-(i)-(a), it follows   for $I_2(h,t)$
\DEQS
\EE \sup_{0\le h\le \delta} \lk| I_2(t,h) \rk|_{L^2}^2 &\le & \EE \sup_{0\le h\le \delta} \lk| \int_t^ {t+h}\int_Z \CT(t+h-s)  u_m (s)G(z)\, \tilde \eta(dz, ds)  \rk|_{L^2}
\\
&\le& \EE  \int_t^ {t+\delta}\int_Z \lk|\CT(t+h-s)  u_m (s) G(z)\,\rk|^ 2 _{L^2}\nu(dz)\, ds
\\
&\le & C\, \delta \sup_{t\le s\le t+\delta}  \EE\lk|  u_m (s)\rk|^2 _{L^ 2 }
.
\EEQS
Again, by Hypothesis \eqref{hypogeneral}-(i)-(a) and (iii), we get by Theorem \ref{RS2} for  $I_3(h,t)$
\DEQS
\EE \sup_{0\le h\le \delta} \lk| I_3(t,h) \rk|_{L^2} &\le & \EE \sup_{0\le h\le \delta} \lk| \int_t^ {t+h}\int_Z \CT(t+h-s)  u_m (s)H(z) \,  \nu(dz)  ds  \rk|_{L^2}
\\
&\le& \EE  \int_t^ {t+\delta}\int_Z \lk|\CT(t+h-s)  u_m (s)H(z) \,\rk|_{L^2}\nu(dz)\, ds
\\
&\le & C\, \delta \sup_{t\le s\le t+\delta}  \EE\lk|  u_m (s)\rk|^2 _{L^2}
.
\EEQS
Collecting altogether we arrive at
\DEQS
\lqq{ \EE \lk( \sup_{t\le s\le h} \lk| u_m(t+s)-u_m(t)\rk|_{L^ 2 } ^ {2\over \alpha}\rk)   } &&
\\ &\le &  \EE\lk( \sup_{0\le s\le h}  \lk|\lk[ \CT(s)-I\rk] u_m(t)\rk|_{L^ 2 }\rk) ^ {2\over \alpha } + \EE \lk( \lk| I_1(t,h) \rk|_{L^ 2 }\rk) ^ {2\over \alpha }
+\EE \lk( \lk| I_2(t,h) \rk|_{L^ 2 }\rk) ^ {2\over \alpha}
\\
&\le & h ^\frac 1{\alpha }  \lk(1+ \EE \lk|  u_m(t)\rk|_{H^1_2}^{2}\rk),
\EEQS
since $r'\le 2$.
Applying Corollary \ref{comp-2}, we know the family of laws of $\{ u_m:m\in\NN\}$  is tight in $\DD(0,T;H^{\gamma}_2(\RR^d))$. 

\medskip

\paragraph{\bf Step IV:}
First, note, since $ \nu_m\to \nu$ on $Z\setminus \{0\}$, hence $\eta_m\to \eta$ in $M(Z\setminus\{0\}\times [0,T])$, so the family $\{ \eta_m:m\in\NN\}$ is tight in $M(Z\setminus\{0\}\times [0,T])$.
Now, it follows from Step III, i.e.\ the fact that the sequence $\{u_m:m\in\NN\}$ is tight in $\DD(0,T;H^ \gamma_2(\RR^d))$,
that there exists a pair $(u^\ast,\eta^\ast)$ of $\DD(0,T;H_2^ \gamma(\RR^d )) \times M_I(Z\setminus\{0\}\times [0,T])$--valued random variables over $\mathfrak{A}$
and a subsequence  $\{ m_k:k\in\NN\}$ 
such that $\{(u_{m_k},\eta_{m_k}):k\in\NN\}$ converges to $(u^\ast,\eta^\ast)$ weakly in $M_I(Z\setminus\{0\}\times [0,T])\times\DD(0,T;H_2^ \gamma(\RR^d ))$.
In fact, by the construction of $\eta_m$ we have even $\eta^\ast=\eta$.
For simplicity, we denote the subsequence $\{(u_{m_k},\eta_{m_k}):k\in\NN\}$ again by $\{(u_{m},\eta_{m}):m\in\NN\}$.
By  the  modified version of the Skorohod embedding Theorem, see Theorem D.1 \cite{reactdiff},
there exists a probability space $(\bar \Omega,\bar \CF,\bar \PP)$
and $\DD (0,T;H^\gamma_2(\RR^d ))\times M_{I}(Z\setminus\{0\}\times[0,T])$-valued random variables $(\bar u_1,\bar \eta_ 1)$, $(\bar u_2,\bar \eta_2)$, $\ldots $, having the same law as
the random variables $( u_1, \eta_ 1)$, $( u_2,\eta_2)$, $\ldots $, and a $ \DD(0,T;H^\gamma _2(\RR^d ))\times M_I(Z\times \RR_+)$-valued random variable
$(\bar u^\ast,\bar \eta^\ast)$ on $(\bar \Omega,\bar \CF,\bar \PP)$ with $\CL((\bar u^\ast, \bar \eta^\ast))=\mu^\ast$
such that $\PP$ a.s.
\DEQSZ\label{limita-s}
\lqq{ 
\lk( \bar u _m,\bar \eta_ m \rk) \longrightarrow  \lk(\bar  u ^\ast,\bar \eta ^\ast\rk) } &&
\\&&\nonumber
 \mbox{ in }   \DD(0,T;{H^{\gamma} _2(\RR^d) }) \times M_I(Z\setminus\{0\}\times \RR_+). 
\EEQSZ

Before continuing,  we will introduce the following notation.
For a random measure $\mu$ on $Z\setminus\{0\}\times [0,T]$ and for any $U\in {\mathcal{B}(Z\setminus\{0\})} $ 
let us define  an  $\bar{\mathbb{N}}$-valued process
$(N_\mu(t,U))_{t\ge 0}$ by
$N_\mu(t,U):= \mu(U \times (0,t]), \;\; t\ge 0.
$
In addition, we denote by  $(N_\mu(t))_{t\ge 0}$ the measure
valued process defined by $N_\mu(t) =\{ {\mathcal{S}} \ni U \mapsto
N_\mu(t,U)\in\bar{\mathbb{N}}\}$, $t\in[0,T]$.

Now, let
$\bar {\mathbb{F}}=(\bar {\mathcal{F}}_t)_{t\ge 0}$
be the
filtration defined for any $t\in[0,T]$ by
\begin{equation} \label{eq:filtrat}
 \bar {\mathcal{F}}_t= \sigma( \{ (N_{\bar\eta_m}1_{[0,s]} ,\bar u_m1_{[0,s]}), m\in{\mathbb{N}}\},\, (N_{\bar \eta^ \ast}1_{[0,s]},\bar u^\ast1_{[0,s]}); 0\le s\le t).
\end{equation}
Since $\sigma(N_{\bar \eta_m}1_{[0,s]})\subset \sigma(N_{\bar \eta_{m+1}}1_{[0,s]})$,  it is easy to show that the filtration obtained by
deleting the family  $\{\bar \eta_m:m\in\NN\}$ in
\eqref{eq:filtrat} is the equal to $\bar {\mathbb{F}}$.

\begin{claim}\label{ersterclaim}
The following holds
\begin{enumerate}
\item for every $m\in\NN$, $\bar \eta_m$ is a time homogeneous Poisson random measure on $\CB(Z)\times \CB(\RR_+)$
over $(\bar \Omega,\Bar \CF,\bar \BF ,\bar \PP)$ with intensity measure $\nu_m$;
\item $\bar \eta^ \ast$ is a time homogeneous Poisson random measure on $\CB(Z)\times \CB([0,T])$
over $(\bar \Omega,\Bar \CF,\bar \BF,\bar \PP)$ with intensity measure $\nu$;
\end{enumerate}
\end{claim}
Before starting with the actual proof,
 we  cite the following Lemma.
\begin{lemma}\label{eqdefprm} A measurable mapping $\eta:\Omega\to M_{I}(Z\times R^+_0) $ is a time homogeneous Poisson random measure with intensity  $\nu$ iff
\begin{letters}\label{prmpoints}
\item for any $\AAA \in\CZ$ with $\nu(U)<\infty$, the random variable $N_\eta (t,\AAA ) $ is
Poisson distributed with parameter $t\,\nu(\AAA )$;
\item for any
disjoint sets $\AAA _1,\AAA _2,\ldots,\AAA _n\in\CZ$, and any $t\in[0,T]$ the random
variables $N_\eta (t,{\AAA _1}) $, $N_\eta (t,{\AAA _2}) $, \ldots, $N_\eta (t,{\AAA _n}) $  are  mutually independent;
\item the $M_{\bar {\Bbb N}}(Z)$-valued process  $(N_\eta (t,\cdot))_{t\ge 0}$ is adapted to $\BF$; 
%
\item for any
$t\in[0,T]$, $\AAA \in\CZ$, $\nu(U)<\infty$, and any $r,s\ge t$, the random variables
$N_\eta (r,\AAA )-N_\eta (s,\AAA )$ are independent of $\CF_t$.
\end{letters}
\end{lemma}

\begin{proof}[Proof of Claim \ref{ersterclaim}]
We have to show that for arbitrary $m\in\NN$, $\bar \eta_m$ satisfies item (a), (b), (c) and (d).
In order to show (a), let $\AAA\in \mathcal{B}(Z)$. Then $N_{\eta_m}(\AAA,t)$ is Poisson distributed with parameter $t\nu_m(\AAA)$. Since $\Law(\eta_m)=\Law(\bar \eta_m)$, it follows (a).
In order to show (b),  let $U_1,\ldots, U_k\in{\mathcal{Z}}$, be $k$ disjoint sets and $t\ge 0$. Since $\eta_m$ is a time homogeneous Poisson
 random measure, we have for all $k\ge 1$ and $\theta_j\in\RR$, $j=1,\ldots k$
 \begin{eqnarray}\label{KIK-FUNC} {\mathbb{E}}
 e ^{i\left(\sum_{l=1}^k \theta_l  N_{\eta_m} (t,U_l)  \right)} =
\prod_{l=1}^k  {\mathbb{E}} e ^{i\;\theta_l  N_{ \eta_m} (t,U_l) }.
 \end{eqnarray}
  Since $\bar\eta_m$ and $\eta_m$ have the same laws, the random variables $N_{\bar  \eta_m} (t,U)$ and $N_{ \eta_m} (t,U)$
 have the same characteristic functions for any $ U\in \mathcal{Z}$ and $t\ge0$. Therefore, it follows from \eqref{KIK-FUNC} that
 \begin{eqnarray*}
 {\bar{ \mathbb{E}}}
  e ^{i\left(\sum_{l=1}^k \theta_l  N_{\bar \eta_m} (t,U_l)   \right)} =
\prod_{l=1}^k    \bar{\mathbb{E}} e ^{i\;\theta_l  N_{ \bar \eta_m} (t,U_l) }
,
  \end{eqnarray*}
which proves (b).
Next, we have to show that $\bar \eta_m$ satisfies (c)
  with the filtration defined in \eqref{eq:filtrat}.
  For this purpose let us fix $l\in \mathbb{N}$, $t_0\in [0,T]$ and $r\ge s\ge t_0$.
  It follows from the definition of $\bar{{\mathbb{F}}}$ that $N_{\bar \eta_m}$ is $\bar{{\mathbb{F}}}$-adapted.
  It remains to prove (d). In particular, it remains to prove that the random variable
  $\bar X= N_{\bar \eta_m}(r)-N_{\bar \eta_m}(s)$ is independent of
  $\bar{\mathcal{F}}_{t_0}$.
  By Lemma \ref{defprmapp} the random variable $ X= N_{ \eta_m}(r)-N_{ \eta_m}(s)$ is independent of
  $N_{\eta}(t_0)$. Since for any $k\ge 1$, the $\sigma$--algebra generated by $\eta$ until time $t_0$ is finer than the $\sigma$--algebra generated by $\eta_{k}$,
  we know $X$ is independent from  $N_{\eta_k}(t_0)$ for all $k\in\NN$. In particular, for all
  $f,g:M_I(Z\times [0,T])\to \RR$
  we have for all $k\ge 1$
  $$
  \EE f(X)g( N_{\eta_k}(t_0))=\EE f(X)\, \EE g(N_{\eta_k}(t_0)).
  $$
Since for all $k\ge 1$, $\bar \eta_ k$ have the same law as $\eta_k$, and $\bar \eta_ m$ have the same law as $\eta_m$, therefore $X$ has the same law as $\bar X$.
It follows that
  $$
\bar   \EE f(\bar X)g( N_{\bar \eta_k}(t_0))=\EE f(\bar X)\, \EE g(N_{\bar \eta_k}(t_0)).
  $$
Hence, $\bar X$ is independent of the filtration $\sigma\lk (  N_{\bar \eta_k}(t),\, t\le t_0 \rk)$.

Next, we need to show that for any $k\ge 1$,  $\bar X$ is independent of $\bar u_k(t)$ for any $t\le t_0$.
In what follows we also fix $t \in [0,t_0]$. Since $\mathcal{L}(\bar{u}_l, \bar{\eta}_l)=\mathcal{L}(u_l, \eta_l)$, it follows that
  \begin{equation}\label{EQ-LAW}
\mathcal{L}(\bar{u}_l|_{[0,t]}, \bar{X}_l)=\mathcal{L}(u_l|_{[0,t]}, X_l),
  \end{equation}
 where $X=N_{\eta_m}(r)-N_{\eta_m}(s)$.
Now, we have to show that $\bar X$ is independent to the $\sigma(\{ \bar u_k|_{[0,t]} : t\le t_0, k\in\NN\})$.
Recall that  $u_k$ is the unique solution to the linear stochastic evolution equation \eqref{eq_1}, hence it is adapted to
the $\sigma$-algebra generated by
$\eta_k$. Consequently, $u_k|_{[0,t]}$ is independent of $X$ and we infer from this last remark and the equality of the laws  that $\bar{u}_k|_{[0,t]}$ is independent of
$\bar{X}$ for all $t\le t_0$ and $k\in\NN$.

It remains to prove that $\bar{X}$ is independent of $\sigma(\{ N_{\bar \eta^\ast}(t),t\le t_0\} )$ and $u^ \ast|_{[0,t]}$, but, this is the object
of  the next Lemma.
\begin{lemma}\label{independenc}
Let $Y$ be a Banach space, $z$ and $y_\ast$ be two $Y$-valued random
variables over $(\Omega,{\mathcal{F}},\mathbb{P})$. Let $\{y_n:n\in{\mathbb{N}}\}$  be a
family of $Y$-valued random variables over a probability space
$(\Omega,{\mathcal{F}},\mathbb{P})$ such that $y_n\to y_\ast$ weakly, i.e.\ for all
$\phi\in Y ^ \ast$, $ \mathbb{E} e ^ {i\la \phi,y_n\ra }\to \mathbb{E}   e ^ {i\la
\phi,y\ra }$.
If for all $n\ge 1$ the two random variables $y_n$ and $z$ are
independent, then $z$ is also independent of $y_\ast$.
\end{lemma}
\begin{proof}[Proof of Lemma \ref{independenc}]
The random variables $y_\ast$ and $z$ are independent iff \begin{eqnarray*}
{\mathbb{E}} e ^{i(\la\theta_1, z\ra + \la\theta_2, y_\ast\ra )} = {\mathbb{E}} e ^{i\la\theta_1,
z\ra}\,{\mathbb{E}} e^{ i\la\theta_2, y_\ast\ra }, \quad \theta_1,\theta_2\in Y  ^
\ast. \end{eqnarray*}
The weak convergence and the independence of $z$ and $y_n$ for all $n\in{\mathbb{N}}$ justify the following chain of equalities.
\begin{eqnarray*} {\mathbb{E}} e ^{i(\la\theta_1, z\ra + \la\theta_2, y_\ast\ra )} =
\lim_{n\to\infty}{\mathbb{E}} e ^{i(\la\theta_1, z\ra + \la\theta_2, y_n\ra )} =
\lim_{n\to\infty}{\mathbb{E}} e ^{i\la\theta_1, z\ra}\,{\mathbb{E}} e^{ i\la\theta_2, y_n\ra}={\mathbb{E}} e
^{i\la\theta_1, z\ra}\,{\mathbb{E}} e^{ i\la\theta_2, y_\ast\ra}. \end{eqnarray*}
\end{proof}
Fix $t\le s$. Since $\bar u_k|_{[0,t]}$ is independent
from $ \bar{X}$, Lemma \ref{independenc} implies that $u^\ast|_{[0,t]}$ is independent from $\bar{X}$.
Similarly, $ \bar{X}$ is independent from $N_{\eta^ \ast}(t)$ for all $t\le t_0$.

Finally we have to show Claim \ref{ersterclaim}-(b). In particular, we have to show that
$\eta_\ast\in{\mathcal{M}}_I(S\times {\mathbb{R}} ^ +)$ is a time homogeneous Poisson random measure
with intensity $\nu$. Observe first that  $Z\setminus B(\ep_m) \uparrow Z\setminus\{0\}$ as $m\to\infty$. By Theorem 4.3.4 \cite{applebaum} one knows  $\eta_m\to \eta$ weakly. Using the fact that $\eta$ is a Poisson random measure with intensity measure $\nu$ and $\Law(\bar \eta_m)=\Law(\eta_m)$ and $\Law(\eta^ \ast)=\Law(\eta)$, the assertion follows by Lemma \ref{independenc}.
\end{proof}

\begin{claim}\label{zweiter claim}
 The following  holds
 \begin{enumerate}
  \item  for all $m\in\NN$, $\bar u_m$ is a  $\bar\BF$-progressively measurable process; 
\item
the process $ \bar u^\ast$ is a
$\bar\BF $
-progressively measurable process.
\end{enumerate}

\end{claim}

\begin{proof}
As we noted earlier, one can argue as in \cite[Proposition B.5]{uniquness2} and prove that
the random variables $\bar{u}_m, \bar u^\ast: \bar{\Omega} \to
 \DD(0,T;H^\gamma _2(\RR^d ))$ induce two $H^ \gamma_2(\RR^d)$-valued stochastic processes still
denoted with the same symbols. Here, we have to show that for each
$m\in{\mathbb{N}}$, $\bar u_m$ and
$\bar u^\ast$ are
$\bar {\mathbb{F}}$-progressively measurable. By definition of $\bar {\mathbb{F}}$,
for fixed $m\in{\mathbb{N}}$ the process $\bar u_m$ is adapted to $\bar
{\mathbb{F}}$ by the definition of $\bar {\mathbb{F}} $.
By Step (III) the laws of the processes  $\{\bar u_m:m\in\NN\}$ are tight in $\DD(0,T;H^\gamma _2(\RR^d ))$.
Thus, for any $\ep>0$ there exists a compact set $K_\ep\subset H^\gamma _2(\RR^d )$
such that $\PP\lk( \bar u_m\not\in K_\ep\rk)<\ep$. However, by Proposition B.5
the Haar projections $\pi_n$ converge uniformly on $K_\ep$. On the other side, $\pi_n u_m$ is progressively measurable for any
$m\in\NN$ and $n\in\NN$.  Thus, we can choose $\{\bar
u_n^m,m\in{\mathbb{N}}\}$ to be progressively measurable.  Since $\bar u_n^m\to \bar u_n$ as $m\to\infty$ weakly in
$\DD(0,T;H^\gamma _2(\RR^d ))$ and $\bar u_n\to \bar u^ \ast$ as $n\to\infty$
weakly also in $\DD(0,T;H^\gamma _2(\RR^d ))$, it follows that $\bar u^ \ast$ is a
limit  in $\DD(0,T;H^ \gamma _2(\RR^d ))$ of some progressively
measurable step functions. In particular, $\bar u^ \ast$ is also
progressively measurable, i.e.\ b.) holds.
\end{proof}


\paragraph{\bf Step V:}
In the last step we will show that the process $\bar u^\ast$ is indeed a mild solution to \eqref{itoeqn}.
In particular, we will show that for any $t\in[0,T]$ the identity \eqref{eq_1} is satisfied.
But before, we will state the following proposition.

\begin{proposition}\label{converges}
Let $H$ be a Hilbert space $(\CT(t))_{t\ge 0}$ be a group on $H$ and $\{\xi_m:m\in\NN\}\subset \CM^2 ([0,T];H)$ a sequence of progressively measurable processes such that $\EE \int_0^ T |\xi_m(s)-\xi(s)|_H^ 2\, ds\to 0$ and there exists a constant $C>0$ such that $\EE \int_0^ T |\xi_m(s)|_{H_1}^ 2\, ds\le C$, $m\in\NN$, $H_1\hookrightarrow H$ compactly. Then the process
$$
[0,T]\ni t\mapsto \mathfrak{S}(\xi_m)(t)= \int_0^t \CT(t-s)\xi_m(s)\tilde \eta(dz,ds) 
$$
converges to $[0,T]\ni t=\int_0^t \CT(t-s)\xi(s)\,\tilde \eta(dz,ds)$ in $\DD(0,T;H)$.
\end{proposition}

\begin{proof}
The proof follows by Theorem 7.8 \cite[Chapter 3.7, p.\ 131]{ethier} and the fact that
the set of the laws $\{\mathfrak{S}(\xi_m): m\in\NN\}$ tight on $\DD([0,T];H)$ is.

\end{proof}
\medskip

In order to show the identity \eqref{eq_1}  fix $t\in[0,T]$.
The Lebesgue's Dominated Convergence Theorem gives
$$
\bar \EE \phi( \bar u^\ast (t)) =\bar \EE \phi\lk( \lim_{m\to\infty} \bar u_m(t)\rk) =  \lim_{m\to\infty}\bar \EE \phi\lk(  \bar u_m(t)\rk),
$$
for any $\phi \in \CC_b(L^2(\RR^d))$.
Since  for any $m\in\NN$, the pairs $(\bar u_m,\bar \eta_m)$ have the same laws as
the random variables $( u_m, \eta_ m)$, we can infer that
$$
\bar \EE \phi( u^\ast (t))  = \lim_{m\to\infty} \EE \phi\lk(  u_m(t)\rk).
$$
Since $u_m$ is a mild solution to \eqref{oben_m},
$$
\bar \EE \phi( u^\ast (t)) = \lim_{m\to\infty} \EE \phi\lk( (\mT u_0)(t) + \mF_F(u_m)(t)  + \mS^m (u_m)(t)+ \mF_z ( u_m)(t) 
 \rk),
$$
where
\DEQS
\mS^m  (\xi)(t) =\int_0^t \int_{Z} \CT(t-s) \xi(s) g(z)\;\tilde \eta_m(dz,ds) ,\quad t\in[0,T].
\EEQS
Again, since  $(\bar u_m,\bar \eta_m)$, $m\in\NN$, have the same laws as
the random variables $( u_m, \eta_ m)$, 
$$
\bar \EE \phi( u^\ast (t))    = \lim_{m\to\infty} \bar \EE \phi\lk( (\mT u_0) (t) + \mF_F( \bar u_m)(t)  + \bar \mS^m ( \bar u_m)(t) + \mF_z (\bar u_m)(t)\rk),
$$
where
\DEQS
\bar \mS^m  ( \bar u_m)(t) =\int_0^t \int_{Z} \CT(t-s)  \bar u_m(s) g(z)\;\tilde {\bar{\eta}}_m(dz,ds) ,\quad t\in[0,T]
\EEQS
and
\DEQS
\bar \mF_z  ( \bar u_m)(t) =\int_0^t \int_{Z} \CT(t-s)  \bar u_m(s) h(z) {{\nu}}_m(dz) \, ds,\quad t\in[0,T].
\EEQS
It remains to show that $\bar u_m\to \bar u^\ast$ $\PP$-a.s.\ in $\DD(0,T;H^\gamma_2(\RR^d))$ implies
\DEQS
\lqq{ \lim_{m\to\infty} \bar \EE \phi\lk(  (\mT u_0) (t) + \mF_F( \bar u_m)(t)  + \bar \mS^m ( \bar u_m)(t) + \mF_z (\bar u_m)(t) \rk)} &&\\
& = &
 \bar \EE \phi\lk(  (\mT u_0) (t) + \mF_F(\bar u^\ast)(t)  + \mS_{\ast} (\bar u^\ast)(t)  + \mF_z ( \bar u^ \ast) (t)\rk)
\EEQS
for any $\phi \in  \CC_b(L^ 2(\RR^d))$, 
 where
\DEQS
 \mS  _{\ast}(\bar u^\ast)(t) =\int_0^t \int_{Z} \CT(t-s)\bar u^\ast(s) g(z)\;\tilde {{\eta^ \ast}}(dz,ds) ,\quad t\in[0,T].
\EEQS

\medskip

First, we will show that
$ \mF_F( \bar u_m)\to  \mF_F( \bar u^\ast)$ in $L^2(\RR^ d)$. 
Since we have for any $\gamma<1$
$$
\bar u_m\longrightarrow  \bar u^\ast  \quad \mbox{in}\quad \DD(0,T; H_2^\gamma(\RR^d)) \quad \mbox{as} \quad m\to\infty,
$$
and $\alpha<{d+2\over d-2}$,
we know by Sobolev embedding Theorems
$$
\bar u_m\longrightarrow  \bar u^\ast  \quad \mbox{in}\quad \DD(0,T; L^{\alpha+1}(\RR^d)) \quad \mbox{as} \quad m\to\infty.
$$
Since $F:L^{\alpha+1}(\RR^d )\to L^{\alpha +1\over \alpha}(\RR^d)$ is continuous,
we obtain
$$
F(\bar u_m)\longrightarrow F(\bar u^\ast)  \quad \mbox{in}\quad L^\infty(0,T; L^{\alpha +1\over \alpha}(\RR^d)) \quad \mbox{as} \quad m\to\infty.
$$
Since $L^\infty(0,T)\hookrightarrow L^{q'}(0,T)$,
$$
F(\bar u_m)\longrightarrow F( \bar u^\ast)  \quad \mbox{in}\quad L^{q'}(0,T; L^{\alpha +1\over \alpha}(\RR^d)) \quad \mbox{as} \quad m\to\infty.
$$
By the Strichartz estimate we get
$$
 \mF_F( \bar u_m) \longrightarrow  \mF_F( \bar u^\ast)  \quad \mbox{in}\quad L^\infty(0,T; L^{2}(\RR^d)) \quad \mbox{as} \quad m\to\infty.
$$

\medskip

Next, we will investigate the third summand. By Step V  we have $\PP$-a.s.
$$
\bar u_m \longrightarrow \bar u^\ast \quad \mbox{in}\quad \DD(0,T;H^{\gamma }_2(\RR^d )), \quad \mbox{as} \quad m\to\infty.
$$
By Hypothesis \ref{hypogeneral}, we can infer by Theorem \ref{RS3} that
$$
\bar u_m g(z)\longrightarrow \bar u^\ast g(z)\quad \mbox{in}\quad \DD(0,T;H^{\gamma }_2(\RR^d )) \quad \mbox{as} \quad m\to\infty.
$$
Since $\int_Z 1_{|z|>\ep_m}|z|^2 \nu(dz) \longrightarrow \int_Z |z|^2 \nu(dz)$, it follows $1_{|z|>\ep_m} u_m z  \longrightarrow u^\ast z $
in $\CM^2 (0,T,L^ 2(\RR^d ))$ and therefore,
$$
1_{|z|> \ep_m}\bar u_m z\longrightarrow\bar  u^\ast z\quad \mbox{in}\quad \CM^2 (0,T;L^ 2(\RR^d )) \quad \mbox{as} \quad m\to\infty.
$$
Therefore it follows by Proposition \ref{converges}
$$
\bar \mS^m(\bar u_m z)\longrightarrow  \mS_\ast(\bar u^\ast z)\quad \mbox{in}\quad \DD(0,T;L^ 2(\RR^d )) \quad \mbox{as} \quad m\to\infty.
$$
For the last term, one shows with the same kind of arguments that  $\bar \mF_z  ( \bar u_m)$ converges to $\bar \mF_z (\bar u^\ast)$, as $m\to \infty$,
in $L^{\infty}(0,T;L^2(\RR^d))$.
Hence, we have for all $t\in[0,T]$,
$$
\bar \EE \phi(\bar  u^\ast (t))    = \bar \EE \phi\lk( (\mT u_0) (t) + \mF_F(\bar   u^ \ast)(t)  + \mS_\ast ( \bar u^\ast)(t) + \mF_z (\bar u^\ast)(t)\rk),
$$
Since $\bar u^ \ast$ and $\bar u_m$ belong $\PP$-a.s.\ to $\DD(0,T;L^ 2 (\RR^d))$, it follows by Theorem 7.8 \cite[p. 131]{ethier} that $\bar u_m\to \bar u^ \ast$ in $\DD(0,T;L^ 2 (\RR^d))$.
Hence,  $\bar u^\ast$ is indeed a solution to \eqref{itoeqn}.
\end{proof}

\appendix

\section{Multiplication}

In the section we recall some well known facts concerning Nemytskii operators, which are necessary to prove our main result.
Most of the content is taken from Runs and Sickel  \cite{runst}.

First let us introduce some functions spaces. Let $1\le p\le \infty$ and $m\in\NN_0$, then $W^m_p$ denotes the Sobolev spaces defined by
$$
W_p^m(\RR^d ) :=\lk\{ f\in L^p(\RR^d): |f|_{W^p_m} = |f|_{L^p}+ \sum_{|\alpha|\le m} |D^\alpha f|_{L^p}<\infty\rk\}.
$$
Let $1\le p < \infty$ and $s>0$, $s\not \in \NN$,  then $W^s_p$ denotes the Slobodeckij spaces defined by
\DEQS
\lqq{ W_p^s(\RR^d ) :=\lk\{ f\in L^p(\RR^d): |f|_{W^p_m}^p = |f|_{W^{[s]}_p}^p\rk.
} && \\
&&\lk. {} +\sum_{|\alpha|=[s]}  \int_{\RR^d}  \int_{\RR^d} {|D^\alpha f(x) - D^\alpha f(y)|^p \over |x-y|^{n + (s-[s])p}}\, dx\,dy<\infty\rk\}.
\EEQS
Let $1\le p < \infty$ and $s\in\RR$,  then $H^s_p$ denotes the Bessel Potential spaces or Sobolev spaces of fractional order defined by
\DEQS
H_p^s(\RR^d ) :=\lk\{ f\in L^p(\RR^d): |f|_{H^s_p} = |\CF^{-1} ( (1+\xi^2 )^{\frac s2} \CF f) |_{L^p}
<\infty\rk\}.
\EEQS
Finally, let us introduce the Triebel Lizorkin spaces $F_{p,q}^s(\RR^d)$ and the Besov spaces $B_{p,q}^s(\RR^d)$ by

Let us shortly recall some known identities. The proof can be found e.g.in  \cite{triebel}.
\begin{itemize}
     \item $L_p(\RR^d ) = F_{p,2}^0(\RR^d ) $ for $1<p<\infty$,
     \item $W_p^m (\RR^d ) = F_{p,2}^m(\RR^d ) $ for $1<p<\infty$, $m\in\NN$,
     \item $W_p^s (\RR^d ) = F_{p,p}^s(\RR^d )=B_{p,s}^s(\RR^d )  $ for $1<p<\infty$, $s>0$, $s\not\in\NN$,
     \item $H_p^s (\RR^d ) = F_{p,2}^s(\RR^d ) $  for $1<p<\infty$, $s\in\RR$.
      \end{itemize}

In order to treat the nonlinearity, we list here some useful results.
Assume $s_1<0<s_2$.

\begin{theorem}\label{RS2}(see \cite[p.\ 229]{runst})
Assume $s=s_1\le s_2$, $s_1+s_2>d\cdot\max(0,\frac 1p-1)$, and $q\ge \max(q_1,q_2)$. Then
\begin{itemize}
  \item if $s_2>s_1$, then $F_{p,q_1}^{s_1}(\RR^d) \cdot B_{\infty,q_2}^{s_2} (\RR^d)\hookrightarrow F_{p,q_1}^{s_1}(\RR^d)$;
  \item if $s_1=s_2$ then $F_{p,q_1}^{s_1} (\RR^d)\cdot B_{\infty,q_2}^{s_1}(\RR^d) \hookrightarrow F_{p,q}^{s_1}(\RR^d)$
\end{itemize}
\end{theorem}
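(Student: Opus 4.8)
The plan is to prove this pointwise multiplier estimate by Bony's paraproduct decomposition, following the strategy of Runst--Sickel. Fix a Littlewood--Paley family $(P_j)_{j\ge -1}$, so that $\sum_j P_j=\mathrm{Id}$ and $P_j$ localises frequencies to the annulus $|\xi|\sim 2^j$, and write $S_j=\sum_{k<j}P_k$ for the associated low--pass filters. Recall that, up to equivalence of norms, $|f|_{F^{s}_{p,q}}\sim \big\|(2^{js}P_jf)_j\big\|_{L^p(\ell^{q})}$ and $|g|_{B^{s}_{\infty,q}}\sim \big\|(2^{js}P_jg)_j\big\|_{\ell^{q}(L^\infty)}$. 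Given $f\in F^{s_1}_{p,q_1}(\RR^d)$ and $g\in B^{s_2}_{\infty,q_2}(\RR^d)$, I would split the product into the three standard pieces
$$ fg = T_f g + T_g f + R(f,g), \qquad T_u v=\sum_j S_{j-1}u\,P_j v,\quad R(u,v)=\sum_{|j-k|\le 1}P_j u\,P_k v. $$
Since $s_1<0$ the factor $f$ is only a distribution, so the first point to record is that this decomposition is meaningful: the resonant sum $R(f,g)$ converges precisely because $s_1+s_2>0$, and it is this convergence that assigns a meaning to the product $fg$.

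The heart of the argument is then to bound the three pieces separately and to check that each lands in the target space. For $T_g f$ the low factor $g$ enters only through $S_{j-1}g$, and since $s_2>0$ one has $B^{s_2}_{\infty,q_2}(\RR^d)\hookrightarrow L^\infty(\RR^d)$, hence $|S_{j-1}g|_{L^\infty}\lesssim |g|_{B^{s_2}_{\infty,q_2}}$ uniformly in $j$; because $S_{j-1}g\,P_j f$ is spectrally localised at $2^j$, this block retains the Littlewood--Paley profile of $f$ and yields $|T_g f|_{F^{s_1}_{p,q_1}}\lesssim |g|_{B^{s_2}_{\infty,q_2}}\,|f|_{F^{s_1}_{p,q_1}}$. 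This borderline term is exactly the one that fixes the target space $F^{s_1}_{p,q_1}$, with its secondary index $q_1$, and explains why no smoothness is gained. For $T_f g$ I would instead exploit $s_1<0$: estimating $|S_{j-1}f|$ by the negative--order regularity of $f$ (so that $|S_{j-1}f|_{L^p}\lesssim 2^{-js_1}|f|_{F^{s_1}_{p,q_1}}$ after summing the geometric series) and $|P_j g|_{L^\infty}\lesssim 2^{-js_2}c_j\,|g|_{B^{s_2}_{\infty,q_2}}$ with $(c_j)\in\ell^{q_2}$, the block at scale $2^j$ carries the factor $2^{-j(s_1+s_2)}c_j$, so that $T_f g\in F^{s_1+s_2}_{p,q_2}(\RR^d)$. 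Since $s_2>0$ gives $s_1+s_2>s_1$, the embedding $F^{s_1+s_2}_{p,q_2}\hookrightarrow F^{s_1}_{p,q_1}$ closes this term.

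The main obstacle is the resonant remainder $R(f,g)$. Here the spectra of $P_j f\,P_k g$ with $|j-k|\le 1$ sit in balls $\{|\xi|\lesssim 2^j\}$ rather than in annuli, so reconstructing the Littlewood--Paley blocks of the product requires summing over all $j$ above each dyadic level; that summation converges only when $s_1+s_2>0$, and when $p<1$ the failure of the triangle inequality in $L^p$ forces the sharper threshold $s_1+s_2>d\max(0,\tfrac1p-1)$, which is precisely the standing hypothesis. Executing this in the Triebel--Lizorkin scale (as opposed to the easier Besov scale) is where the real work lies: one must interchange the $L^p(\ell^{q})$ ordering of the norms by the Fefferman--Stein vector--valued maximal inequality, which needs $p<\infty$, and invoke the Runst--Sickel ``admissible sequence'' lemmas that convert ball-- and annulus--supported dyadic families back into genuine $F$-- and $B$--norms. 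The outcome is $R(f,g)\in F^{s_1+s_2}_{p,\min(q_1,q_2)}(\RR^d)\hookrightarrow F^{s_1}_{p,q_1}(\RR^d)$.

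Combining the three bounds gives $|fg|_{F^{s_1}_{p,q_1}}\lesssim |f|_{F^{s_1}_{p,q_1}}\,|g|_{B^{s_2}_{\infty,q_2}}$, i.e.\ the first bullet. The second bullet, with $s_1=s_2$ and the relaxed index $q\ge\max(q_1,q_2)$, is handled by the same decomposition; the only change is that the diagonal interaction in $R(f,g)$ can no longer be summed against a strictly decaying geometric factor, so the two secondary indices must be merged by a H\"older estimate in the sequence variable, forcing the target index up to $\max(q_1,q_2)$. I expect the bookkeeping of the resonant term, and in particular the passage between $\ell^q(L^\infty)$ and $L^p(\ell^q)$ via the maximal inequality, to be the only genuinely delicate step; the two off--diagonal paraproducts are routine once the norm characterisations are in place.
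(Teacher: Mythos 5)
The first thing to note is that the paper contains no proof of Theorem \ref{RS2} at all: it is quoted, as a known multiplication theorem, directly from Runst--Sickel \cite[p.\ 229]{runst}, so the only ``proof'' in the paper is that citation. Your paraproduct strategy is in fact the method of the cited source (Runst--Sickel prove all their multiplication theorems via Bony's decomposition together with the annulus/ball synthesis lemmas and vector-valued maximal inequalities you invoke), and for the first bullet your sketch is essentially a correct reconstruction: $T_g f$ is the borderline term that fixes the target $F^{s_1}_{p,q_1}(\RR^d)$, while $T_f g$ and $R(f,g)$ land at smoothness $s_1+s_2>s_1$. One imprecision worth recording: your block estimates for $T_f g$ and $R(f,g)$ are of $\ell^{q}(L^p)$ type, so they naturally produce the Besov spaces $B^{s_1+s_2}_{p,q_2}(\RR^d)$ and $B^{s_1+s_2}_{p,\infty}(\RR^d)$ rather than $F$-spaces (for the resonant term the Besov ball-criterion only needs $s_1+s_2>d\max(0,\tfrac1p-1)$, which is why the hypothesis has $\sigma_p$ and not $\sigma_{pq}$); since the smoothness gain is strict, the embedding into $F^{s_1}_{p,q_1}(\RR^d)$ is insensitive to the secondary index and your conclusion stands.

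Your treatment of the second bullet, however, contains a genuine error: you attribute the loss of the secondary index to the resonant term, and this is backwards. When $s_1=s_2=s$, the hypothesis $2s>d\max(0,\tfrac1p-1)\ge 0$ forces $s>0$, so the blocks of $R(f,g)$ sit at smoothness $2s$, a \emph{strict} gain over the target $s$: the resonant term is harmless and imposes no constraint on $q$. What actually degenerates is the paraproduct $T_f g$. Your bound $|S_{j-1}f|_{L^p}\lesssim 2^{-js_1}|f|_{F^{s_1}_{p,q_1}}$ rests on $s_1<0$ (the geometric series is dominated by its last term); for $s_1=s>0$ one only gets the uniform bound $|S_{j-1}f|_{L^p}\lesssim |f|_{F^{s}_{p,q_1}}$, so the blocks of $T_f g$ sit exactly at smoothness $s$ with $\ell^{q_2}$ coefficients and no gain at all. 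It is therefore the two paraproducts --- $T_g f$ contributing the index $q_1$ and $T_f g$ contributing $q_2$ --- that force $q\ge\max(q_1,q_2)$. There is also a real index subtlety here: the naive block estimate only gives $T_f g\in B^{s}_{p,q_2}(\RR^d)$, and $B^{s}_{p,q_2}(\RR^d)\hookrightarrow F^{s}_{p,q}(\RR^d)$ at equal smoothness requires $q_2\le\min(p,q)$, which is not assumed. To land in $F^{s}_{p,q}(\RR^d)$ one must preserve the $L^p(\ell^q)$ structure, e.g.\ via the pointwise bound $\sup_j|S_{j-1}f(x)|\le\sum_k|P_kf(x)|$ together with $F^{s}_{p,q_1}(\RR^d)\hookrightarrow F^{0}_{p,1}(\RR^d)$ (valid because $s>0$), pulling out the $\ell^{q_2}\subset\ell^{q}$ norm of the coefficients of $g$. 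With these corrections your framework does close the second case, but the argument for it as you wrote it would fail.
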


\begin{theorem}\label{RS3}(see \cite[p.\ 238]{runst})
Let $s>0$,
$$
\frac 1 {r_1} =\frac 1d \lk( \frac d{p_1} -s\rk) >0, \quad \mbox{ and } \quad
\frac 1 {r_2} =\frac 1d \lk( \frac d{p_2} -s\rk) >0,
$$
and
$$
\frac 1 {r_1}+\frac 1 {r_2} =\frac 1r =\frac 1d \lk( \frac dp -s\rk) <1.
$$
Then
$$
F_{p_1,q_1}^s (\RR^d ) \cdot F_{p_2,q_2}^s (\RR^d ) \hookrightarrow F_{p,q}^s (\RR^d ) ,
$$
iff $$ \max( q_1,q_2)\le q \le \infty.
$$
In addition,
$$
B_{p_1,q_1}^s (\RR^d ) \cdot B_{p_2,q_2}^s (\RR^d ) \hookrightarrow F_{p,q}^s (\RR^d ) ,
$$
iff $$ \max( q_1,q_2)\le q \le \infty, \quad \mbox{ and } \quad 0<q_1\le r_1,\quad  0<q_2\le r_2.
$$
\end{theorem}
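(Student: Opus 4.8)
The plan is to prove both embeddings by means of Bony's paraproduct decomposition, reducing the multiplication estimate to three frequency-localized pieces that are then controlled by Sobolev embedding, the Hardy--Littlewood maximal function, and H\"older's inequality. Fix a Littlewood--Paley decomposition with blocks $\Delta_j$ (supported where $|\xi|\sim 2^j$) and partial sums $S_j=\sum_{k<j}\Delta_k$, and write
\[
fg=\Pi_1(f,g)+\Pi_2(f,g)+\Pi_3(f,g),
\]
where $\Pi_1=\sum_j S_{j-1}f\,\Delta_j g$ and $\Pi_2=\sum_j \Delta_j f\,S_{j-1}g$ are the off-diagonal paraproducts and $\Pi_3=\sum_{|j-k|\le1}\Delta_j f\,\Delta_k g$ is the resonant (high-high) term. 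Before estimating, I would record the analytic content of the index hypotheses: the relation $\tfrac1{r_i}=\tfrac1{p_i}-\tfrac sd>0$ is exactly the critical Sobolev exponent, so the (micro-index-free) Sobolev embedding for Triebel--Lizorkin spaces gives $F^s_{p_i,q_i}(\RR^d)\hookrightarrow L^{r_i}(\RR^d)$; the constraint $\tfrac1r=\tfrac1{r_1}+\tfrac1{r_2}<1$ forces $r_1,r_2>1$, which is precisely what is needed for the Hardy--Littlewood maximal operator to be bounded on $L^{r_i}$ and for $L^{r_1}\cdot L^{r_2}\hookrightarrow L^r$ to be a legitimate H\"older product.

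\textbf{Off-diagonal terms.} Since the summand $S_{j-1}f\,\Delta_j g$ of $\Pi_1$ is frequency-localized at $\sim 2^j$, the square-function characterization of $F^s_{p,q}$ applies directly. Bounding the low-frequency factor pointwise by the maximal function, $|S_{j-1}f|\lesssim Mf$, and applying H\"older with the \emph{correct} pairing $\tfrac1p=\tfrac1{r_1}+\tfrac1{p_2}$ (which is forced by $\tfrac1{r_1}=\tfrac1{p_1}-\tfrac sd$), I would obtain
\[
\|\Pi_1(f,g)\|_{F^s_{p,q}}\lesssim \Big\| Mf\,\big(\sum_j 2^{jsq}|\Delta_j g|^q\big)^{1/q}\Big\|_{L^p}\le \|Mf\|_{L^{r_1}}\,\|g\|_{F^s_{p_2,q}}\lesssim \|f\|_{F^s_{p_1,q_1}}\,\|g\|_{F^s_{p_2,q_2}},
\]
where the last step uses $\|Mf\|_{L^{r_1}}\lesssim\|f\|_{L^{r_1}}\lesssim\|f\|_{F^s_{p_1,q_1}}$ and the micro-index monotonicity $F^s_{p_2,q_2}\hookrightarrow F^s_{p_2,q}$, valid precisely when $q\ge q_2$. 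The term $\Pi_2$ is symmetric and needs $q\ge q_1$; together these account for the sufficiency of $\max(q_1,q_2)\le q$. The Fefferman--Stein vector-valued maximal inequality is what legitimizes moving the maximal function inside the $\ell^q$ sum here.

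\textbf{Resonant term --- the main obstacle.} I expect $\Pi_3$ to be the delicate step, and it is where $s>0$ is genuinely used. Each summand $\Delta_j f\,\Delta_j g$ has Fourier support in a \emph{ball} of radius $\sim 2^j$ rather than an annulus, so its contribution is spread over all lower dyadic shells and the usual block-by-block argument fails. The remedy is the ``$s>0$ summation lemma'': for a series $\sum_j h_j$ with $\mathrm{supp}\,\hat h_j\subset\{|\xi|\le 2^{j+1}\}$ and $s>0$ one has $\|\sum_j h_j\|_{F^s_{p,q}}\lesssim \|(\sum_j 2^{jsq}|h_j|^q)^{1/q}\|_{L^p}$, the positivity of $s$ being exactly what sums the geometric tail $\sum_{\ell\le j}2^{(\ell-j)s}$. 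Applying this with $h_j=\Delta_j f\,\Delta_j g$, pulling $\sup_k|\Delta_k g|\lesssim Mg$ out of the $\ell^q$-sum, and using H\"older in $x$ gives
\[
\|\Pi_3(f,g)\|_{F^s_{p,q}}\lesssim \Big\|\big(\sum_j 2^{jsq}|\Delta_j f|^q\big)^{1/q}\,Mg\Big\|_{L^p}\lesssim \|f\|_{F^s_{p_1,q_1}}\,\|g\|_{F^s_{p_2,q_2}}.
\]

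\textbf{Besov case and necessity.} For $B^s_{p_1,q_1}\cdot B^s_{p_2,q_2}\hookrightarrow F^s_{p,q}$ the same three-paraproduct scheme applies, but the Sobolev embedding into $L^{r_i}$ is no longer micro-index-free: the sharp statement $B^s_{p_i,q_i}\hookrightarrow L^{r_i}$ at the critical relation requires $q_i\le r_i$, and this single ingredient is what produces the additional hypotheses $0<q_i\le r_i$ that distinguish the second assertion from the first. Finally, for the necessity (``only if'') direction I would test the embeddings against lacunary or self-similar families, e.g.\ $f=\sum_j a_j\,2^{jd/p_1}\psi(2^j\cdot)$ for a fixed Schwartz atom $\psi$ with Fourier support in a fixed annulus and suitably chosen coefficients $\{a_j\}$; such functions saturate the $\ell^q$-structure and make the product's $F^s_{p,q}$ norm diverge once $q<\max(q_1,q_2)$ (respectively once $q_i>r_i$ in the Besov case), establishing the sharpness of the stated index ranges.
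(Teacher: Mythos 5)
The paper itself offers no proof of this statement: Theorem \ref{RS3} is quoted verbatim from Runst--Sickel \cite[p.~238]{runst}, so your proposal must be measured against the standard paramultiplication proof in that reference --- which is indeed the strategy you chose. Your treatment of the $F$-case sufficiency is essentially correct and matches the classical argument: the annulus-support square-function characterization for $\Pi_1,\Pi_2$, the pointwise bound $|S_{j-1}f|\lesssim Mf$, H\"older with the pairing $\tfrac1p=\tfrac1{r_1}+\tfrac1{p_2}$ (which you correctly derive from the index relations), the micro-index monotonicity $F^s_{p_2,q_2}\hookrightarrow F^s_{p_2,q}$ for $q\ge q_2$, and the ball-support ``$s>0$'' summation lemma for the resonant term $\Pi_3$. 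Two small remarks there: the Fefferman--Stein inequality is not actually needed where you invoke it (a scalar maximal bound uniform in $j$ suffices, though Fefferman--Stein does enter the proof of the annulus/ball summation lemmas themselves), and the ball-support lemma requires $s>d\max(0,\tfrac1p-1,\tfrac1q-1)$, of which only $s>d(\tfrac1p-1)$ follows from the hypothesis $\tfrac1r<1$; for small micro-indices this is a genuine endpoint restriction your sketch silently passes over.

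The genuine gap is in the Besov assertion. You claim the \emph{only} new ingredient is that the critical embedding $B^s_{p_i,q_i}(\RR^d)\hookrightarrow L^{r_i}(\RR^d)$ requires $q_i\le r_i$, but rerunning your $\Pi_1$ estimate with $g\in B^s_{p_2,q_2}$ still requires bounding $\big\| \big(\sum_j 2^{jsq}|\Delta_j g|^q\big)^{1/q}\big\|_{L^{p_2}}=\|g\|_{F^s_{p_2,q}}$ by $\|g\|_{B^s_{p_2,q_2}}$, i.e.\ the embedding $B^s_{p_2,q_2}\hookrightarrow F^s_{p_2,q}$, which is available only when $q_2\le\min(p_2,q)$. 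The hypothesis $q_2\le r_2$ does \emph{not} deliver this, since $\tfrac1{r_2}=\tfrac1{p_2}-\tfrac sd$ forces $r_2>p_2$. Nor can you escape by estimating $\Pi_1$ in $B^s_{p,q_2}$ and embedding into $F^s_{p,q}$: that needs $q_2\le\min(p,q)$, and $p<r_2$ as well. Closing this step is precisely the delicate content of the cited proof and requires a finer tool than plain H\"older --- e.g.\ the sharp critical embedding into Lorentz spaces $B^s_{p_i,q_i}\hookrightarrow L^{r_i,q_i}$ combined with H\"older in Lorentz norms --- so the Besov half of your argument, as written, does not go through. Finally, the theorem is an ``iff'': your necessity direction is only a gesture toward lacunary test functions, and for a complete proof the counterexamples for $q<\max(q_1,q_2)$, and for $q_i>r_i$ in the Besov case, must actually be constructed and computed.
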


\section{A Tightness criteria in $\mathbb{D}([0,T];Y)$}
Let $Y$ be a separable and complete metric space and $T>0$. The
space $\mathbb{D}([0,T];Y)$ denotes the space of all right continuous
functions $x:[0,T]\to Y$ with left limits.
The space of continuous functions is usually equipped with the
uniform topology. But, since $\mathbb{D}([0,T];Y)$ is complete but not
separable in the uniform topology, we equip $\mathbb{D}([0,T];Y)$ with the
Skorohod topology in which $\mathbb{D}([0,T];Y)$ is both separable and
complete. For more information about Skorokhod space and topology
we refer to Billingsley's book \cite{billingsley} or Ethier and
Kurtz \cite{ethier}. In this appendix we only state the
following tightness criterion which is necessary for our work. For
this we denote by ${\mathcal{P}}\left( \mathbb{D}([0,T];Y)\right)$ the space of Borel
probability measures on $\mathbb{D}([0,T];Y)$.

\begin{corollary}\label{comp-2}
Let $\{ x_n:n\in {\mathbb{N}}\}$ be a sequence of  c\`{a}dl\`{a}g processes,
each of the process defined on a probability space $
(\Omega_n,{\mathcal{F}}_n,\mathbb{P}_n)$.
Then the sequence of laws of  $\{ x_n:n\in {\mathbb{N}}\}$ 
is tight on  $\mathbb{D}([0,T];Y)$ if
\begin{enumerate}
\item   there exists a space $Y_1$, $Y_1 \hookrightarrow Y$ compactly, such that 
such that
$$ \EE^ n\left| x_n(t)\right|_{Y_1}^ r\le C 
,\, \forall n\in{\mathbb{N}} ;
$$
\item there exist two constants $c>0$ and $\gamma> 0$ and a real
number $r>0$ such that for all $\theta>0$, $t\in[0,T-\theta]$, and
$ n\ge 0$
$$
 {\mathbb{E}}_n\sup_{t\le s\le t+\theta} |x_n(t)-x_n(s )|_Y ^ r\le c\, \theta ^ \gamma.
$$
\end{enumerate}
\end{corollary}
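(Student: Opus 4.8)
The plan is to deduce the corollary from the standard characterization of tightness in the Skorokhod space $\mathbb{D}([0,T];Y)$ (as in \cite{ethier} and \cite{billingsley}): tightness of the laws reduces to two ingredients, namely a compact containment condition and a uniform control of the oscillation modulus of the paths. Hypothesis (1) will supply the former and hypothesis (2) the latter, and most of the work is in matching the two bounds to the abstract criterion rather than in any new estimate.

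First I would establish compact containment. Since the embedding $Y_1 \hookrightarrow Y$ is compact, for every $R > 0$ the ball $B_{Y_1}(R)$ is relatively compact in $Y$; denote its closure in $Y$ by $K_R$. By the Chebyshev inequality and hypothesis (1),
$$\sup_{n}\mathbb{P}_n\big(x_n(t) \notin K_R\big) \le \sup_n \mathbb{P}_n\big(|x_n(t)|_{Y_1} > R\big) \le R^{-r}\,\sup_n \mathbb{E}^n |x_n(t)|_{Y_1}^r \le C\,R^{-r},$$
uniformly in $t \in [0,T]$ and $n$. Given $\eta > 0$, choosing $R$ with $C\,R^{-r} < \eta$ yields a compact $K_R \subset Y$ with $\inf_n \mathbb{P}_n(x_n(t) \in K_R) \ge 1 - \eta$ for every $t$, which is the compact containment condition.

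Next I would control the oscillations from hypothesis (2). Again by the Chebyshev inequality,
$$\mathbb{P}_n\Big(\sup_{t\le s\le t+\theta}|x_n(s) - x_n(t)|_Y > \epsilon\Big) \le \epsilon^{-r}\,\mathbb{E}_n \sup_{t\le s\le t+\theta}|x_n(t) - x_n(s)|_Y^r \le c\,\epsilon^{-r}\,\theta^\gamma,$$
so that for each fixed $\epsilon > 0$ the forward oscillation over a window of length $\theta$ tends to zero as $\theta \to 0$, uniformly in $n$ and in $t \in [0,T-\theta]$ (only $\gamma > 0$ is needed). This is exactly the Aldous-type oscillation estimate demanded by the cited criterion; feeding it together with the compact containment into that criterion produces, for each $\eta > 0$, a compact subset of $\mathbb{D}([0,T];Y)$ carrying mass $\ge 1-\eta$ uniformly in $n$, i.e.\ tightness.

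The delicate point is the second ingredient, because the bound from (2) is a forward--oscillation estimate at \emph{deterministic} times, whereas what must ultimately be controlled is the genuine Skorokhod modulus $w'$ (the infimum over admissible partitions of the maximal oscillation on the sub-intervals). Since the paths are only \cadlag, jumps have to be accommodated, and this is where $\gamma>0$ rather than $\gamma>1$ becomes essential: a naive union bound over a uniform partition into $\sim T/\theta$ sub-intervals loses a factor $\theta^{-1}$ and would force $\gamma>1$, while the correct $\mathbb{D}$-space criterion exploits both the freedom to place the partition break-points and the fact that a short window contains a large jump only with small probability. I expect this reconciliation -- verifying that the deterministic forward-oscillation bound indeed feeds the $\mathbb{D}([0,T];Y)$ modulus criterion -- to be the main obstacle, and it is precisely the content already packaged in the tightness theorem of \cite{ethier,billingsley} that is being invoked.
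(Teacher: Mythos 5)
You reproduce the paper's overall strategy --- compact containment from hypothesis (1) via Chebyshev, an oscillation bound from hypothesis (2) via Chebyshev, then an appeal to the standard tightness criterion in $\mathbb{D}([0,T];Y)$ --- and the compact-containment half is correct. The genuine gap is exactly the step you label ``the delicate point'' and then delegate to the references: the conversion of a forward-oscillation estimate at \emph{deterministic} times, $\mathbb{P}_n\bigl(\sup_{t\le s\le t+\theta}|x_n(s)-x_n(t)|_Y\ge\epsilon\bigr)\le c\,\epsilon^{-r}\theta^{\gamma}$, into control of the Skorokhod modulus $w'$ is \emph{not} packaged in Ethier--Kurtz or Billingsley. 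What those references provide is (i) the Chentsov-type two-sided criterion (Billingsley, Thm.~13.5; Ethier--Kurtz, Ch.~3, Sec.~8), which requires a bound of the form $\mathbb{P}_n\bigl(|x_n(t)-x_n(t_1)|_Y\ge\lambda,\ |x_n(t_2)-x_n(t)|_Y\ge\lambda\bigr)\le C\lambda^{-r}(t_2-t_1)^{\kappa}$ with an exponent $\kappa$ \emph{strictly greater than} $1$, and (ii) Aldous's criterion, which requires increment control after \emph{stopping times}. Hypothesis (2) with $0<\gamma\le 1$ (which the statement allows) yields neither, so your claim that your estimate ``is exactly the Aldous-type oscillation estimate demanded by the cited criterion'' is incorrect. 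For comparison, the paper's own proof passes through the two-sided probability in (i), bounding it by the one-sided oscillation just as you do, but it too obtains only the exponent $\gamma$; so your route is essentially the paper's, and both arguments stall at the same unproved implication.

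That implication is in fact false for $\gamma\le 1$, so no reading of the cited theorems can close the gap. Take $Y=Y_1=\mathbb{R}$ (in finite dimension the identity is a compact embedding), let $U_n$ be uniformly distributed on $[0,T-1]$, and set $x_n(t)=1_{[U_n,\,U_n+1/n)}(t)$. Hypothesis (1) is trivial, and hypothesis (2) holds with $\gamma=1$ and every $r>0$: the oscillation of $x_n$ over $[t,t+\theta]$ is $\{0,1\}$-valued and equals $1$ only if one of the two jump times $U_n$, $U_n+1/n$ lies in $(t,t+\theta]$, an event of probability at most $2\theta/(T-1)$. Yet every sample path satisfies $w'(x_n,\delta,T)=1$ as soon as $\delta>1/n$: in any partition of mesh $>\delta$, the interval containing $U_n$ either contains $U_n$ in its interior (oscillation $1$) or starts exactly at $U_n$ and then, having length $>1/n$, also contains the down-jump (oscillation $1$). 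Hence for every compact $K\subset\mathbb{D}([0,T];\mathbb{R})$ one has $\mathbb{P}_n(x_n\in K)=0$ for all large $n$, and the laws are not tight. This also shows why your heuristic (freedom in placing break-points plus rarity of jumps in short windows) cannot be extracted from (2): deterministic-time bounds are blind to the fact that the two jumps always occur within $1/n$ of each other, whereas stopping times (Aldous) or a Chentsov exponent $\kappa>1$ do detect it. The corollary and both proofs become correct if (2) is strengthened to $\gamma>1$ --- and in that case the ``naive union bound'' you dismiss already suffices, since it controls the ordinary modulus of continuity and hence $w'$.
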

\begin{proof}
The inequality \ref{comp-2}-(a) and the
Chebyscheff  inequality gives the necessary conditions for the compact containment condition. Next, comparing with  \cite[Theorem 7.2, p.\ 128]{ethier}.
%
Now fix $t_1\le t\le t_2$. Then \begin{eqnarray*}\lefteqn{
 \mathbb{P} _ n\left( |x_n(t)-x_n(t_1)|_Y\ge \lambda, \, |x_n(t)-x_n(t_2)|_Y\ge \lambda\right) }
&&\\&& \le \mathbb{P}_n\left( \sup_{t_1\le s\le t_2} |x_n(s)-x_n(t_1)|_Y\ge
\lambda\right). \end{eqnarray*} Estimating the RHS by the Chebyshev
inequality and using inequality \ref{comp-2}-(b) leads to inequality
the second condition in \cite[Theorem 7.2, p.\ 128]{ethier}. 
Thus the assertion follows.
\end{proof}

\section{Compactness methods}

\begin{lemma}\label{compact}
Let $1\le p<\infty$.
A set $\CA\subset L^p (\RR^d )$ is compact, if
\begin{enumerate}
  \item there exists a $\gamma>0$ such that $\CA$ is bounded in $L_{\gamma}^p(\RR^d)$;
  \item there exist numbers  $\delta\ge 0$, $s>0$, a sequence $R_n$ with $R_n\le R_{n+1}$ and $R_n\uparrow\infty$,
   and a constant $C>0$ such that
  $$\sup_{n\ge 1} \frac 1 {R_n^\delta} \lk| v 1_{B(R_n)}\rk|_{H^s_2 (B(R_n))}\le C,\quad v\in \CA.\footnote{The function $1_{B(R_n)}$ denotes the indicator function of $B(R_n)$ and $B(R_n)=\{ x\in\RR ^d:|x|\le R_n\}$.}
$$
\end{enumerate}
\end{lemma}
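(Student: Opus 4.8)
The plan is to establish that $\CA$ is relatively compact by proving sequential precompactness: given any sequence $(v_j)_{j\in\NN}$ in $\CA$, I will extract a subsequence that is Cauchy, hence convergent, in $L^p(\RR^d)$. The two hypotheses play complementary roles. Hypothesis (i) controls the tails uniformly, while hypothesis (ii) yields local compactness on each ball $B(R_n)$, and the argument consists in gluing these two facts together through a diagonal extraction.

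First I would extract the tail estimate from (i). Since $\CA$ is bounded in $L^p_\gamma(\RR^d)$, write $\sup_{v\in\CA}\int_{\RR^d}(1+|x|^2)^{\gamma/2}|v(x)|^p\,dx=:M<\infty$. Because $(1+|x|^2)^{\gamma/2}\ge (1+R^2)^{\gamma/2}$ whenever $|x|\ge R$, one obtains
$$
\sup_{v\in\CA}\int_{|x|>R}|v(x)|^p\,dx\le (1+R^2)^{-\gamma/2}M\longrightarrow 0\quad\text{as } R\to\infty ,
$$
since $\gamma>0$. In particular $\CA$ is bounded in $L^p(\RR^d)$ (the weight is $\ge 1$) and its tails are uniformly small, so given $\ep>0$ I may fix an index $N$ with $(1+R_N^2)^{-\gamma/2}M<\ep^p$.

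Next I would exploit (ii) together with the Rellich--Kondrachov theorem. For each fixed $n$, hypothesis (ii) states that the restrictions $\{v1_{B(R_n)}:v\in\CA\}$ are bounded in $H^s_2(B(R_n))$ by $C R_n^\delta$; since $s>0$ and $B(R_n)$ is a bounded Lipschitz domain, the embedding $H^s_2(B(R_n))\hookrightarrow L^p(B(R_n))$ is compact in the admissible range of $p$. Hence from $(v_j)$ one can extract, for each $n$, a subsequence whose restrictions converge in $L^p(B(R_n))$; a diagonal extraction then produces a single subsequence, still denoted $(v_j)$, such that $(v_j1_{B(R_n)})_j$ is Cauchy in $L^p(B(R_n))$ for \emph{every} $n$.

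Finally I would combine the two ingredients. For the subsequence just constructed and the index $N$ fixed above, split
$$
\|v_j-v_k\|_{L^p(\RR^d)}^p=\int_{B(R_N)}|v_j-v_k|^p\,dx+\int_{|x|>R_N}|v_j-v_k|^p\,dx .
$$
The first term tends to $0$ as $j,k\to\infty$ by $L^p(B(R_N))$ convergence, while the second is at most $2^{p}\ep^p$ uniformly in $j,k$ by the tail estimate (using $|a-b|^p\le 2^{p-1}(|a|^p+|b|^p)$ and $v_j,v_k\in\CA$). Letting $j,k\to\infty$ and then $\ep\to0$ shows $(v_j)$ is Cauchy, hence convergent, in $L^p(\RR^d)$, which proves relative compactness. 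The one genuinely delicate point is the local compact embedding $H^s_2(B(R_n))\hookrightarrow L^p(B(R_n))$: it dictates the admissible relation between $s$, $p$ and $d$ (namely $p<2d/(d-2s)$ when $2s<d$, and any $p<\infty$ when $2s\ge d$), a constraint satisfied in the cases where the lemma is applied, for instance $p=2$, $s=1$.
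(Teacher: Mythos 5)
Your proposal is correct and follows essentially the same route as the paper's own proof: uniform tail smallness from the weighted $L^p_\gamma$ bound, local compactness on each ball via the compact Sobolev embedding, a diagonal extraction, and a ball-plus-tail splitting to conclude the Cauchy property. Your version is in fact slightly cleaner, since you get the tail estimate directly by Chebyshev with the weight $(1+|x|^2)^{\gamma/2}$ instead of the paper's normalization through the limit $a^\ast$ of the norms, and you make explicit the restriction on $p$, $s$, $d$ needed for the compact embedding, which the paper leaves implicit.
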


\begin{proof}[Proof of Lemma \ref{compact}:]
Let $\{ u_n:n\in\NN\}\subset \CA$ be a sequence. Then, we have to show that there exists a subsequence $\{n_k:k\in\NN\}$ and a $u^ \ast$ such that
$$ u_{n_k} \longrightarrow u^ \ast , \quad \mbox{for} \quad k\to\infty \quad \mbox{in} \quad L^ p(\RR^d),
$$
or, that there exists a subsequence $\{n_k:k\in\NN\}$  which is a  Cauchy sequence in $L^ p(\RR^d)$.
The existence of a unique limit $u ^\ast$ follows.

In the next steps we will construct a subsequence, and then show that this subsequence is a Cauchy sequence.
Since $\{ u_n:n\in\NN\}$ is bounded in $L^ p (\RR^d)$, there exists a subsequence $\{n_k:k\in\NN\}$ and $a^ \ast\in[0,\infty)$ such that
$ |u_{n_k}|^p_{L^ p (\RR^d)}\to a^\ast$ as $k\to\infty$. If $a^\ast=0$, then $\{ u_{n_k}:k\in\NN\}$ converges to zero and we are done.
Let us assume $a^\ast>0$.

Next, we can assume, that
$
|u_{n_k}|^p_{L^p}\in (\frac 12 a ^\ast ,\frac 32 a ^\ast )$ 
for
all $k\in\NN$. 
Let $\{R_m:m\in\NN\}$ a sequences in $\RR_0^+$ with $R_m \uparrow\infty$, such that
$$
\int_{\RR ^d \setminus B(R_m)} |u_n(x)| ^p \, dx \le \frac 14 \, 2 ^{-m} |u_{n}| ^p_{L ^p},\quad \forall \, n\in\NN,\,\, m\in\NN.
$$
Due to the first condition on $\CA$ such a sequence exists. In particular, it follows by an application of the Chebyscheff inequality and the fact that $|u_{n_k}|^p_{L^p}\ge \frac  12 a ^\ast$.
Since $\CA$ is bounded in $H ^s_p(B(R_1))$ and
$H ^s_p(B(R_1))\hookrightarrow L ^p(B(R_1))$ compactly,
there exists  a subsequence $\{ n ^1 _{k}:k\in\NN\}$ of $\{ n_{k}:k\in\NN\}$ such that
$\{
u_{n ^1_{k}}:k\in\NN\}$ is a Cauchy sequence in $L ^p(B(R_1))$.

Again, since $\CA$ is bounded in $H ^\gamma_p(B(R_2))$ and
$H ^\gamma_p(B(R_2))\hookrightarrow L ^p(B(R_2))$ compactly,
there exists  a subsequence $\{ n ^2_{k}:k\in\NN\}$ of $\{ n ^{1}_{k}:k\in\NN\}$ such that
$\{
u_{n ^2_{k}}:k\in\NN\}$ is a Cauchy sequence in $L ^p( B(R_2))$.

Proceeding in this way we obtain subsequences $\{n ^m_k:k\in\NN\}$, $m\in\NN$,
such that
\begin{itemize}
  \item $\{ n_k ^1:k\in\NN\}\supset \{ n_k ^2:k\in\NN\}\supset \{ n_k ^3:k\in\NN\}\supset \ldots $;
  \item for each fixed $m\in\NN$, $\{ u_{n ^m_l}:l\in\NN\}$ is a Cauchy sequence in $L ^p(B(R_m))$;
  \item for each fixed $m\in\NN$ we have $\int_{\RR ^d \setminus B(R_m)} |u_n(x)| ^p \, dx \le 2 ^{-m} |u_{n}| ^p_{L ^p},\, \forall n\in\NN$;
%
\end{itemize}
Let $\{\tilde n_k:k\in\NN\}$ be the diagonal sequence defined for $k\in\NN$ by $\tilde n_k:=n_k ^k$.
Now, we claim that $\{u_{\tilde n_k}:k\in\NN\}$ is a Cauchy sequence in $L ^p(\RR ^d)$.
In order to show it, fix $\ep>0$. The task is now to find an index $k_1\in\NN$ such that
$$\int_{\RR ^d } |u_{n_k}-u_{n_l}| ^p \, dx \le\ep,\quad \forall\, l,k\ge k_1.
$$
Let
$m\in\NN$ be the smallest integer such that $m\ge \ln_2(a^\ast)-\ln_2(\frac{\ep}{6})$.
Since $\{  u _{n^m _k}:k\in\NN \}$ is a Cauchy sequence in $L ^p(B(R_{m}))$ and
$\{ \tilde u _{n_k}:k\ge m\}\subset \{  u _{n^m _k}:k\in\NN \}$, it follows that $\{ \tilde u _{n_k}:k\ge m\}$  is a Cauchy sequence in $L ^p(B(R_{m}))$. Therefore, there exists a $k_1\ge m$ such that
$$
\int_{B(R_m) } |u_{n_k}-u_{n_l}| ^p \, dx \le \frac \ep 2, \quad k,l\ge k_1.
$$
Then we have for any $k,l\ge k_1$
\DEQS
\int_{\RR ^d } |u_{n_k}-u_{n_l}| ^p \, dx \le \int_{ B(R_{m}) } |u_{n_k}-u_{n_l}| ^p \, dx+
 \int_{\RR ^d \setminus B(R_{m})} |u_{n_k}-u_{n_l}| ^p \, dx.
\EEQS
By the choice of $m$ we have
\DEQS
 \int_{\RR ^d \setminus  B(R_{m})} |u_{n_k}-u_{n_l}| ^p \, dx &\le&    \int_{\RR ^d \setminus  B(R_{m})} |u_{n_k}| ^p
 +  \int_{\RR ^d \setminus B(R_{m})} |u_{n_l}| ^p \, dx
 \\
\le 2 ^{-m}  |u_{n_k}| ^p+ 2 ^{-m}   |u_{n_l}| ^p& \le &  2 ^ {-m+1} \frac32 a^\ast
\le \ep/2
.
\EEQS
Collecting altogether,  we get
\DEQS
\int_{\RR ^d } |u_{n_k}-u_{n_l}| ^p \, dx \le
\frac \ep 2 + \frac \ep 2 =\ep,
\EEQS
and the assertion follows.

\end{proof}

\end{document}